\documentclass[12pt,reqno]{amsart}
\usepackage[numbers,sort&compress]{natbib}
\usepackage{amsfonts,bbm}
\usepackage{amssymb,color}
\usepackage{fancyhdr}
\usepackage[titletoc]{appendix}
\usepackage{enumitem}
\usepackage{amsgen}
\usepackage{amscd}
\usepackage{amsmath}
\usepackage{mathrsfs}
\usepackage{cases}
\usepackage{accents} 

\usepackage{verbatim}
\usepackage{makecell}   

\usepackage{float}

\usepackage[colorlinks=true]{hyperref}
\hypersetup{urlcolor=blue, citecolor=green, linkcolor=blue}

\usepackage[margin=3cm, a4paper]{geometry}

\newtheorem{thm}{Theorem}[section]

\newtheorem{lem}[thm]{Lemma}
\newtheorem{prop}[thm]{Proposition}
\newtheorem{defn}[thm]{ \bf{Definition}}

\newtheorem{assu}[thm]{Assumption}
\theoremstyle{remark}
\newtheorem{remark}[thm]{Remark}

\newcommand{\EQ}[1]{\begin{align*}\begin{split} #1 \end{split}\end{align*}}
\newcommand{\EQn}[1]{\begin{align}\begin{split} #1 \end{split}\end{align}}

\newcommand{\Del}[1]{}

\def\norm#1{\left\|#1\right\|}
\def\normo#1{\|#1\|}
\def\normb#1{\big\|#1\big\|}

\def\normB#1{\Big\|#1\Big\|}

\def\abs#1{\left|#1\right|}

\def\absb#1{\big|#1\big|}

\def\brko#1{(#1)}
\def\brkb#1{\big(#1\big)}

\def\fbrk#1{\left\lbrace#1\right\rbrace}

\def\fbrkb#1{\big\lbrace#1\big\rbrace}
\def\fbrkbb#1{\bigg\lbrace#1\bigg\rbrace}

\def\jb#1{\langle#1\rangle}

\def\wt#1{\widetilde{#1}}
\def\wh#1{\widehat{#1}}
\def\wb#1{\overline{#1}}
\def\wtt#1{\accentset{\approx}{#1}} 

\def\pd{\partial}

\newcommand{\ra}{{\rightarrow}}

\def\loe{\le}
\def\goe{\ge}
\def\lsm{\lesssim}

\newcommand{\R}{{\mathbb R}}
\newcommand{\C}{{\mathbb C}}
\newcommand{\Z}{{\mathbb Z}}

\newcommand{\F}{{\mathcal{F}}}

\newcommand{\M}{{\mathcal{M}}}

\newcommand{\E}{{\mathcal{E}}}

\newcommand{\U}{{\mathcal{U}}}
\newcommand{\V}{{\mathcal{V}}}
\newcommand{\W}{{\mathcal{W}}}
\newcommand{\dd}{{\mathrm{d}}}

\newcommand{\TT}{{\mathcal{T}}}

\def\dx{\mathrm{\ d} x}
\def\dy{\mathrm{\ d} y}
\def\ds{\mathrm{\ d} s}

\newcommand{\re}{{\mathrm{Re}}}
\newcommand{\im}{{\mathrm{Im}}}

\def\al{\alpha}

\def\ph{\varphi}
\def\th{\theta}
\def\si{\sigma}
\def\de{\delta}
\def\De{\Delta}

\def\ta{\tau}
\def\la{\lambda}

\newcommand{\I}{\infty}
\def\rev#1{\frac{1}{#1}}
\def\half#1{\frac{#1}{2}}

\numberwithin{equation}{section}

\begin{document}
	\title[Final data problem for NLS in weighted spaces]{Global theory for NLS in the weighted spaces II: final data problem}

	\subjclass[2020]{35Q55, 35A01, 35B40}
	\keywords{nonlinear Schr\"odinger equation, final data problem, weighted spaces, wave operators, pseudo-conformal transform}
	
	\author{Yujin Guo}
	\address{(Y. Guo) Center for Applied Mathematics\\
		Tianjin University\\
		Tianjin 300072, China}
	\email{guoyujin1021@163.com}
	
	\author{Jia Shen}
	\address{(J. Shen) School of Mathematical Sciences and LPMC\\
		Nankai University\\
		Tianjin 300071, China}
	\email{shenjia@nankai.edu.cn}
	
	\author{Yifei Wu}
	\address{(Y. Wu) School of Mathematical Sciences\\
		Nanjing Normal  University\\
		Nanjing 210046, China}
	\email{yerfmath@gmail.com}
	
	\author{Changping Yang}
	\address{(C. Yang) Center for Applied Mathematics\\
		Tianjin University\\
		Tianjin 300072, China}
	\email{cp\_yang@tju.edu.cn}
	
	\date{}

	\begin{abstract}\noindent
		In this paper, we study the final data problem for the defocusing nonlinear Schr\"odinger equation (NLS) 
		\begin{equation*}
			i\pd_t u + \frac12\De u = |u|^p u
		\end{equation*}
		in weighted spaces \(\dot\Sigma^s(\mathbb R^d):=L_x^2(\mathbb R^d;|x|^{2s}\dd x)\). Let \(s_c=\frac d2-\frac2p\). Under the scaling of the NLS, \(\dot{\Sigma}^{-s_c}(\mathbb R^d)\) is the scaling-critical weighted space. In three dimensions, within the range of exponents considered in this paper, we show that the final data problem is locally well-posed for \(s\geq -s_c\) and ill-posed for \(s<-s_c\). This is opposite to the corresponding initial data problem, which is locally well-posed for \(s\leq -s_c\) and ill-posed for \(s>-s_c\).
		
		
		For the three-dimensional quadratic equation, we further obtain a unique global solution for every radial final datum in the critical space $\dot\Sigma^{\frac12}(\mathbb R^3)$. 
	\end{abstract}
	
	\maketitle
	
	
	
	\section{Introduction}
	
	\vspace{0.5cm}
	
	We consider the defocusing nonlinear Schr\"odinger equation (NLS):
	\begin{equation}
		\label{eq:nls}
		i\pd_t u + \frac12\De u = |u|^p u, 
	\end{equation}
	with $p>0$ and $u(t,x):I\times\R^d\ra\C$, where $I\subset\R$ is a time interval. The \emph{initial data problem} for \eqref{eq:nls} is the following: given an initial datum $u_0$, find a solution $u$ of \eqref{eq:nls} satisfying
	\begin{equation*}
		u(0,x)=u_0(x).
	\end{equation*} 
	The Cauchy theory in Sobolev spaces $H^s(\R^d)$ or $\dot H^s(\R^d)$ has been developed extensively; see, for example, \cite{GV79Cauchy,Kat87poincare,CW90critical,Caz03book} and the references therein. If a solution is global, a fundamental question is whether its large-time behavior is asymptotically linear. This leads to the notion of \textit{scattering}. More precisely, let $X$ be a Banach space. We say that a solution $u$ to \eqref{eq:nls} \textit{scatters} to $u_\pm\in X$ as
	$t\to\pm\infty$ if
	\begin{equation}\label{eq:final-data}
		\lim_{t\to\pm\infty}
		\bigl\|e^{-\frac12it\Delta}u(t)-u_\pm\bigr\|_X=0.
	\end{equation}
	The scattering theory for nonlinear Schr\"odinger equations has a long history \cite{GV79Scattering,Str81JFA,Caz03book,KTV14book}.
	
	In this paper, we study the inverse problem suggested by \eqref{eq:final-data}. Given a final datum $u_+$, one seeks a solution, defined for all sufficiently large times, that satisfies \eqref{eq:final-data} as $t\to+\infty$. We refer to this as the \emph{final data problem}. In contrast to the initial data problem, the datum is prescribed at temporal infinity rather than at a finite time. We first formulate the corresponding local theory near $t=+\infty$.
	
	\begin{defn}
		We say that the final data problem for \eqref{eq:nls} is \textit{locally well-posed} in $X$ if, for every \(u_+\in X\), there exists \(T>0\) and a unique solution \(u\) to \eqref{eq:nls} satisfying $e^{-\frac12it\Delta}u\in C([T,+\infty);X)$
		and scattering to \(u_+\) in \(X\) as \(t\to+\infty\). Moreover, the corresponding solution map $u_+
		\longmapsto
		e^{-\frac12it\Delta}u$, locally defined from $X$ to $C([T,+\infty);X)$, is uniformly continuous. The case \(t\to-\infty\) is analogous.
	\end{defn}
	
	The indication of the natural regularity for either the initial data problem or the final data problem comes from scaling. Equation \eqref{eq:nls} is
	invariant under
	\begin{equation*}
		u(t,x)\longmapsto
		u_\lambda(t,x):=\lambda^{\frac2p}u(\lambda^2t,\lambda x),
		\quad \lambda>0.
	\end{equation*}
	Set 
	\begin{equation*}
		s_c:=\frac d2-\frac2p.
	\end{equation*}
	Then, for every $s\in\mathbb R$,
	\begin{equation*}
		\|u_\lambda(0)\|_{\dot H_x^s(\mathbb R^d)}
		=\lambda^{s-s_c}
		\|u(0)\|_{\dot H_x^s(\mathbb R^d)}.
	\end{equation*}
	Thus, $\dot H^{s_c}(\mathbb R^d)$ is the scaling-critical Sobolev space. In the standard \(H^s\)-based theory for the initial data problem, the scaling index $s_c$ separates the expected regimes. Under the usual regularity assumptions on the nonlinearity, local well-posedness holds in the scaling-subcritical range \(s>s_c\); see \cite{CW90critical,Kat87poincare}. By contrast, in many scaling-supercritical cases \(s<s_c\), the flow map fails to be uniformly continuous, and stronger forms of ill-posedness may occur; see \cite{CCT03illposed}. The critical case \(s=s_c\) is generally more delicate \cite{CW89remarks,CKSTT08Annals,Vis07Duke}.

	For the final data problem considered here, since the asymptotic profile is prescribed at temporal infinity, its spatial localization plays a central role. This naturally leads us to consider the weighted spaces
	\begin{equation*}
		\Sigma^s(\mathbb R^d):=L_x^2(\mathbb R^d;\langle x\rangle^{2s}\,dx),
		\quad
		\dot\Sigma^s(\mathbb R^d):=L_x^2(\mathbb R^d;|x|^{2s}\,dx),
	\end{equation*}
	where \(s>0\) and \(\langle x\rangle=(1+|x|^2)^{\frac{1}{2}}\). If \(u\) scatters to \(u_+\) as \(t\to+\infty\), then the rescaled solution \(u_\lambda\) scatters to the rescaled final datum
	\begin{equation*}
		u_{+,\lambda}(x):=\lambda^{\frac{2}{p}}u_+(\lambda x).
	\end{equation*}
	Moreover,
	\begin{equation*}
		\|u_{+,\lambda}\|_{\dot\Sigma^s(\mathbb R^d)}
		=\lambda^{-s_c-s}\|u_+\|_{\dot\Sigma^s(\mathbb R^d)}.
	\end{equation*}
	Hence, \(\dot\Sigma^{-s_c}(\mathbb R^d)\) is the scaling-critical weighted space for the final data problem. We are particularly interested in the mass-subcritical regime \(s_c<0\), since the negative Sobolev critical index \(s_c\) corresponds to the positive weighted exponent \(-s_c\).

	Weighted spaces have played a classical role in the large-time analysis and scattering theory for NLS; see, for example, \cite{CW89remarks,CW92CMP,HO88poinc,Tsu85poincare} and the references therein. For mass-subcritical NLS above the Strauss exponent, Lee \cite{Lee19IMRN} showed that, for some \(0<\beta<p\), the data-to-scattering-state map does not admit a \(C^{1+\beta}\) extension from \(L^2\) to \(L^2\) near the origin. More recently, Burq, Georgiev, Tzvetkov, and Visciglia \cite{BGTV21NLS} established scattering for mass-subcritical NLS with initial data in \(\Sigma^1\cap H^1(\mathbb R^d)\). Shen and Wu \cite{SW23subNLS} further obtained scattering for large initial data in lower-weight spaces \(\Sigma^s\) for certain \(s<1\), as well as almost sure scattering for randomized \(L^2\) data. In the short-range mass-subcritical regime \(s_c<0\) and \(p>\frac{2}{d}\), Nakanishi \cite{Nak01siam} constructed global solutions to the final data problem in \(L^2_x(\mathbb R^d)\) and observed that the problem is supercritical in the \(L^2\) topology.
	
	These results, however, do not determine how the local final-data theory in the homogeneous weighted space \(\dot{\Sigma}^s(\R^d)\) depends on \(s\), nor whether the scaling-critical index \(s=-s_c\) separates the well-posed and ill-posed regimes. This is the first main question addressed in the present paper.
	
	In our previous work \cite{GSY26NLS-I}, we studied the initial data problem for \eqref{eq:nls} in $\dot\Sigma^s(\R^d)$. Schematically, the result is
	\begin{itemize}
		\item the initial data problem is locally well-posed when $s\le -s_c$;
		\item the initial data problem is ill-posed when $s>-s_c$.
	\end{itemize}
	However, the final data problem displays the opposite behavior. Within the parameter ranges covered below,
	\begin{itemize}
		\item the final data problem is locally well-posed when $s\ge -s_c$;
		\item the final data problem is ill-posed when $s<-s_c$.
	\end{itemize}
	Thus, $s=-s_c$ is the critical weighted threshold, but the subcritical and supercritical regimes are interchanged when the Cauchy data are moved from the initial time to time infinity. 

	The precise three-dimensional statement is as follows.

	\begin{thm}[Local theory of the final data problem]\label{thm:final-local}
		Let $0<s<1$ and $u_+ \in \dot\Sigma^s(\R^3)$. Consider \eqref{eq:nls} with final data condition
		\begin{equation}\label{eq:final-local}
			\lim_{t\ra+\I} \normb{e^{-\frac12it\De}u-u_+}_{\dot \Sigma^s(\R^3)} = 0.
		\end{equation}
		Then the following statements hold: 
		\begin{enumerate}
			\item 
			If \( s \geq -s_c \) and $ p\leq \frac{4}{3-2s}$, then there exists $T>0$ such that the equation \eqref{eq:nls} with \eqref{eq:final-local} admits a unique solution $u$ such that
			\begin{equation*}
				e^{-\frac{1}{2}it\Delta} u(t)\in C([T,+\I);\dot\Sigma^s(\R^3)).
			\end{equation*}
			Moreover, 
			the solution map $u_+
			\longmapsto
			e^{-\frac12it\Delta}u$, locally defined from $\dot\Sigma^s(\R^3)$ to $C([T,+\infty);\dot\Sigma^s(\R^3))$, is Lipschitz continuous when \(p\ge1\) and Hölder continuous of order \(p\) when \(0<p<1\), and hence uniformly continuous.
			\item 
			If \(s<-s_c\) and $\frac{2}{3}<p\leq 1$, then the equation \eqref{eq:nls} with condition \eqref{eq:final-local} is ill-posed in the following sense: for any $0<\varepsilon, \delta<1$ and for any $T>0$ sufficiently large there exist solutions $u^1, u^2$ of \eqref{eq:nls} with final data $u^1_+$, $u^2_+ \in \mathcal{S}(\mathbb{R}^3)$ such that
			\begin{align*}
				\|u^1_+\|_{\dot\Sigma^s(\R^3)}+\|u^2_+\|_{\dot\Sigma^s(\R^3)} &\leq C \varepsilon,\\
				\|u^1_+ -u^2_+\|_{\dot\Sigma^s(\R^3)}&< C \delta,\\
				\|e^{-\frac{1}{2}iT\Delta} u^1(T) - e^{-\frac{1}{2}iT\Delta} u^2(T)\|_{\dot\Sigma^s(\R^3)}&>c\varepsilon,
			\end{align*}
			where $C, c>0$ are constants independent of $\varepsilon$ and $\delta$. Thus, the solution map fails to be uniformly continuous on any neighborhood of the origin in $\dot\Sigma^s(\R^3)$.
		\end{enumerate}
	\end{thm}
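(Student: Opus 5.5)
The natural tool is the pseudo-conformal transform, which converts the final data problem at $t=+\infty$ into an initial data problem at $\tau=0$. With $\tau=-1/t$, I set
\[
u(t,x)=(it)^{-\frac32}e^{\frac{i|x|^2}{2t}}\,\overline{v}\Big(\frac1t,\frac xt\Big),
\]
or an equivalent variant with the sign conventions adjusted. Then $v$ solves the non-autonomous equation
\[
i\pd_\tau v+\tfrac12\De v=\pm\,\tau^{\frac{3p}{2}-2}|v|^pv,
\]
with $v(0)=\overline{\wh{u_+}}$ up to harmless factors. Conjugating by $e^{\frac12it\De}$ turns the weight $|x|^s$ on $e^{-\frac12it\De}u(t)$ into the derivative $|\nabla|^s$ acting on $v(\tau)$. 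Thus the condition \eqref{eq:final-local} in $\dot\Sigma^s$ becomes continuity at $\tau=0$ of $v$ in $\dot H^s$. The scaling also changes: the time weight $\tau^{\frac{3p}{2}-2}$ makes $s=-s_c$ correspond to a critical threshold for the transformed problem. The price is that the Sobolev-type behaviour is reversed, because large $s$ now means more regularity, which is exactly the mechanism behind the reversed dichotomy relative to \cite{GSY26NLS-I}.

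For part (1), I would run a contraction for the Duhamel formula
\[
v(\tau)=e^{\frac12 i\tau\De}v_0\mp i\int_0^\tau e^{\frac12 i(\tau-\sigma)\De}\sigma^{\frac{3p}{2}-2}|v|^pv\,d\sigma
\]
on a short interval $[0,\tau_0]$, with $\tau_0=1/T$. The working space is $C_\tau\dot H^s\cap L^q_\tau\dot W^{s,r}$ for Strichartz-admissible pairs. I would estimate the nonlinearity with the fractional chain rule: for $0<s<1$ one has $\||\nabla|^s(|v|^pv)\|\lesssim\|v\|^p\||\nabla|^sv\|$, valid for all $p>0$. The Sobolev embedding $\dot W^{s,r}\subset L^{\tilde r}$ bounds the $|v|^p$ factor, and the time weight is absorbed by H\"older in $\tau$. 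The condition $s\ge -s_c$ is exactly what makes the power of $\tau$ integrable and gives a positive power of $\tau_0$, or a smallness of Strichartz norms via $\tau_0\to0$ in the critical case. The condition $p\le\frac4{3-2s}$, i.e. $s\le s_c+\dots$ with $\frac3{\tilde r}=\frac32-s$-type exponents, guarantees that the Sobolev embedding supplies enough integrability for $|v|^p$ without requiring $s$ to exceed the Sobolev critical index.

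Uniqueness follows from the contraction in the same space. For the continuity of the solution map, I would estimate differences: when $p\ge1$ the difference $|v|^pv-|w|^pw$ is handled Lipschitz-wise, but the $s$-derivative of the difference is delicate. I would instead control the difference in a lower norm and interpolate, or use the estimate $||v|^p-|w|^p|\le|v-w|^p$ when $p<1$, which produces the H\"older exponent $p$. Undoing the transform then gives the conclusions for $e^{-\frac12it\De}u$ on $[T,\infty)$.

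For part (2), I would follow the standard small-dispersion, ODE-dominated ill-posedness construction in the transformed variables, adapting \cite{CCT03illposed} and \cite{GSY26NLS-I}. I take $v_0=\varepsilon\lambda^{\alpha}\phi(\lambda x)$ with a Schwartz profile $\phi$, normalized to size $\varepsilon$ in $\dot H^s$. Because $s<-s_c$ is supercritical for the weighted problem, for high concentration $\lambda\to\infty$ the nonlinear phase
\[
\int_0^{\tau}\sigma^{\frac{3p}{2}-2}|v_0|^p\,d\sigma
\]
becomes large before dispersion acts; here $p>\frac23$ ensures $\frac{3p}{2}-2>-1$, so the integral converges. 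Comparing the approximate solution $v_0e^{-i\,c\,\tau^{\frac{3p}2-1}|v_0|^p}$ for two amplitudes differing by $\delta$ gives phase decoherence, and hence a separation of order $\varepsilon$ in $\dot H^s$ at $\tau=1/T$. The main obstacle is justifying that this ODE approximation is accurate in $\dot H^s$ on the relevant time scale. That requires energy/perturbation estimates for the error, with the nonlinearity only of regularity $p\le1$, and this is presumably where $p\le1$ enters: it keeps the phase $|v|^p$ from generating bad derivatives and allows a H\"older-type stability bound. Finally I transform back and note that $u_+^j$ are Schwartz.
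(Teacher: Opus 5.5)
Your architecture is the paper's: pseudo-conformal reduction to $i\pd_t\U+\frac12\De\U=t^{\frac{3p}{2}-2}|\U|^p\U$ with data in $\dot H^s$, a Strichartz contraction for (1), and a small-dispersion ODE construction for (2). Your family $\varepsilon\lambda^{\frac32-s}a\phi(\lambda x)$ is the paper's $\U^{(a,\mu,\lambda)}(0)$ once one parameter is eliminated by the normalization. However, you leave open the step you yourself flag in (2), and the route you indicate for it is the wrong one.

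\textbf{Part (2): the ODE approximation in $\dot H^s$ is not needed.} Since $s>0$, for every $k>0$
\[
\|f\|_{\dot H^s}^2\ \ge\ k^{2s}\int_{|\xi|\ge k}|\wh f(\xi)|^2\,d\xi .
\]
So it suffices to show two things. First, $\phi^{(a,0)}(T)-\phi^{(a',0)}(T)$ has $L^2$ mass $\ge c_1$ for a suitable $T=T(a,a')$, and at most $c_2k^3$ of it lies in $\{|\xi|<k\}$ because $\|\wh f\|_{L^\infty}\le\|f\|_{L^1}$. Second, the error $w=\phi-\phi^{(0)}$ for the small-dispersion problem $i\phi_t+\frac{\mu^2}{2}\De\phi=t^{\frac{3p}{2}-2}F(\phi)$, with $F(z)=|z|^pz$, is $O(\mu)$ merely in $L^2$. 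This second step is where $p\le1$ is used, and not in the way you guess. In the $L^2$ energy identity for $w$, split into $\{|w|\le|\phi^{(0)}|\}$ and its complement. On the complement, use that $|\phi^{(0)}+w|^p|w|^2$ is real together with $\bigl||a|^p-|b|^p\bigr|\le|a-b|^p$. This gives
\[
\Bigl|\im\bigl[(F(\phi^{(0)}+w)-F(\phi^{(0)}))\wb w\bigr]\Bigr|\lesssim|\phi^{(0)}|^p|w|^2 ,
\]
hence $\frac{d}{dt}\|w\|_{L^2}\lesssim\mu^2(1+t)^C+t^{\frac{3p}{2}-2}\|w\|_{L^2}$. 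Gronwall then yields $\|w\|_{L^2}\lesssim\mu$ for $t\le c|\log\mu|^c$; here $p>\frac23$ makes the weight integrable at $0$. An $\dot H^s$-level perturbation estimate would instead require differentiating $F$, whose derivatives are only $p$-H\"older. Small $p$ makes that harder, not easier, and you give no mechanism for it.

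\textbf{Part (1): two holes.}

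\emph{The critical endpoint.} At $s=-s_c$ the weight $\tau^{\frac{3p}{2}-2}$ lies only in weak $L^{q_1}$ on $[0,\tau_0]$, where $q_1=\frac{4}{4-(3-2s)p}$. This exponent is forced by scaling whatever Lebesgue Strichartz pairs you use. So smallness of the free evolution on short intervals does not close the estimate. The paper works in $L^{q_0,2}_t$ and uses the Lorentz-space Strichartz estimates together with H\"older's inequality in Lorentz spaces.

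\emph{The case $p<1$.} This can only occur when $s>\frac12$. The available difference estimate in the $\dot W^{s,r}$ metric is not Lipschitz, because it contains the term $\||\nabla|^s v\|\,\|u-v\|^p$. So uniqueness does not follow from ``the contraction in the same space''. H\"older-$p$ dependence also needs a weaker difference norm that is still controlled by $\eta=\|\U_{0,1}-\U_{0,2}\|_{\dot H^s}$. An $L^2$-level metric would suffice for uniqueness but not for this, since differences of $\dot H^s$ data need not lie in $L^2$. One needs a norm at $\dot H^s$ scaling with fewer derivatives. Taking zero derivatives fails for $s>\frac12$, because the dual pair then cannot be acceptable. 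The paper uses $\||\nabla|^\theta(\cdot)\|_{L^{q_0}_tL^{r_0}_x}$ with $s-\frac{(3-2s)p}{4}<\theta<sp$ and a non-admissible acceptable pair. This requires the inhomogeneous Strichartz estimates for acceptable pairs and the fractional chain rule for the $p$-H\"older functions $F_z,F_{\bar z}$. Only then does the derivative-of-differences lemma give $\|\U_1-\U_2\|_{C_t\dot H^s}\lesssim\eta+d(\U_1,\U_2)^p\lesssim\eta^p$.
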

	
	\begin{remark}
		(i) Although we state the result only in three dimensions, the argument can be surely extended to more general dimensions. 
		(ii) The instability in part (2) is not of the usual norm-inflation type (see \cite{CCT03illposed}), although it may be regarded as a weak form of norm inflation. 
	\end{remark}

	
	Theorem \ref{thm:final-local}, together with our previous result \cite{GSY26NLS-I}, yields the comparison summarized in Table~\ref{tab:threshold}. In particular, both the initial and final data problems are locally well-posed at the critical index $s=-s_c$.
	
	\begin{table}[H]
		\centering
		\begin{tabular}{|c|c|c|c|}
			\hline
			& $s<-s_c$ & $s=-s_c$ & $s>-s_c$ \\
			\hline
			initial data problem &
			\makecell{well-posed} &
			\makecell{well-posed} &
			\makecell{ill-posed} \\
			\hline
			final data problem &
			\makecell{ill-posed} &
			\makecell{well-posed} &
			\makecell{well-posed}  \\
			\hline
		\end{tabular}
		\caption{Schematic local theory for NLS in $\dot\Sigma^s(\R^3)$}\label{tab:threshold}
	\end{table}
	
	Theorem \ref{thm:final-local} constructs solutions only near temporal infinity. A natural next question is whether these solutions can be continued backward through every finite positive time, thereby yielding global solutions to the final data problem on \((0,+\infty)\). Such an extension is a necessary step toward constructing the forward \textit{wave operator}
	\begin{equation*}
		\Omega_+:u_+\longmapsto u(0).
	\end{equation*}
	It is not, however, sufficient by itself: one must additionally show that the solution extends continuously to \(t=0\) in the underlying space \(X\). For classical results on wave operators in weighted spaces, see, for example, \cite{CW92CMP,GOV94poincare,HO88poinc} and the references therein.
	
	In the present weighted setting, global continuation is nontrivial because no known conservation law directly controls the fractional weighted norm \(\dot{\Sigma}^s(\mathbb R^d)\). This is the main obstruction to extending the local solution given by Theorem \ref{thm:final-local}.

	For final data in $\Sigma^1(\R^3)$, the pseudo-conformal transform converts the problem into an $H^1$ initial data problem for a nonautonomous NLS. In the range considered below, a monotone pseudo-conformal energy supplies the missing a priori bound and allows the local solution to be continued globally. This yields our second main result, which requires no Sobolev regularity of the final data.


	\begin{prop}[Finite pseudo-conformal energy implies global solution]\label{thm:gwp}
		Let $\frac{4}{3}<p\leq 4$, equivalently \(0<s_c\leq 1\). Then, for any $u_+\in\Sigma^1(\R^3)$, there exists a unique global solution $u$ to \eqref{eq:nls} with $e^{-\frac{1}{2}it\Delta} u\in C((0,+\I);\Sigma^1(\R^3))$ such that
		\begin{equation*}
			\lim_{t\ra+\I} \normb{e^{-\frac{1}{2}it\Delta} u-u_+}_{\Sigma^1(\R^3)} = 0.
		\end{equation*}
	\end{prop}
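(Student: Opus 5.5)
We use the pseudo-conformal transform to turn the final data problem into an $H^1$ initial data problem at $\tau=0$. For $t>0$ set $\tau=-1/t$ and define
\begin{equation*}
	u(t,x)=t^{-\frac32}e^{\frac{i|x|^2}{2t}}\,v\Big(-\frac1t,\frac xt\Big),
	\qquad \text{equivalently}\qquad
	v(\tau,y)=|\tau|^{-\frac32}e^{\frac{i|y|^2}{2\tau}}\,u\Big(-\frac1\tau,-\frac y\tau\Big),
\end{equation*}
for $\tau\in(-\infty,0)$. A direct computation shows that $u$ solves \eqref{eq:nls} on $(0,\infty)$ if and only if $v$ solves the nonautonomous equation
\begin{equation*}
	i\pd_\tau v+\tfrac12\De v=|\tau|^{\frac{3p}{2}-2}|v|^pv,\qquad \tau<0 .
\end{equation*}
Writing $e^{-\frac12it\De}=\mathcal M(-t)\mathcal D(t)\F\mathcal M(-t)$ type factorizations, we check that $\|e^{-\frac12it\De}u(t)-u_+\|_{\Sigma^1}\to0$ as $t\to+\infty$ is equivalent to $v(\tau)\to v_0:=c\,\wh{\bar u_+}$ (up to a harmless conjugation/constant phase) in $H^1$ as $\tau\to0^-$. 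This works because $x$-weights on $e^{-\frac12it\De}u$ correspond to $(x+it\nabla)u$, hence to gradients of $v$, and $u_+\in\Sigma^1$ means $\wh{u_+}\in H^1$. Likewise, $e^{-\frac12it\De}u\in C((0,\infty);\Sigma^1)$ corresponds to $v\in C((-\infty,0);H^1)$ with $v\in C((-\infty,0];H^1)$ at the endpoint. The problem therefore becomes: solve the $H^1$ Cauchy problem for $v$ with data $v_0$ at $\tau=0$ backward on all of $(-\infty,0]$.

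\textbf{Local theory.} The coefficient $|\tau|^{\frac{3p}2-2}$ is bounded near $\tau=0$ when $p\ge\frac43$ and smooth away from $0$. Since $p\le4$ is energy-subcritical or critical for $H^1$ in $\R^3$, standard Strichartz contraction (as in Cazenave) gives a unique local solution in $C([-\tau_0,0];H^1)\cap L^q_\tau W^{1,r}$ on a short interval. For $p<4$ the existence time depends only on $\|v(\tau_1)\|_{H^1}$ and a bound on the coefficient over the interval. Uniqueness in $C H^1$ is the usual unconditional uniqueness in the subcritical range. In the endpoint case $p=4$ the coefficient becomes $|\tau|^4$, and local existence relies on the profile rather than the norm, so a little more care is needed.

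\textbf{A priori bound (main step).} Mass $\|v(\tau)\|_{L^2}$ is conserved. For the energy
\begin{equation*}
	E(\tau)=\tfrac14\|\nabla v\|_2^2+\tfrac{|\tau|^{\frac{3p}{2}-2}}{p+2}\|v\|_{p+2}^{p+2},
\end{equation*}
one computes $\frac{d}{d\tau}E=\frac{1}{p+2}\frac{d}{d\tau}\big(|\tau|^{\frac{3p}2-2}\big)\|v\|_{p+2}^{p+2}$. For $\tau<0$ and $\frac{3p}2-2>0$ this derivative is $\le0$, so $E$ is nonincreasing in $\tau$. Since we evolve backward from $\tau=0$, $E(\tau)$ increases as $\tau\to-\infty$, which is the wrong direction, and this is the point requiring care. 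We will instead use the time-weighted quantity
\begin{equation*}
	\tilde E(\tau)=|\tau|^{-(\frac{3p}{2}-2)}\tfrac14\|\nabla v\|_2^2+\tfrac1{p+2}\|v\|_{p+2}^{p+2}.
\end{equation*}
Its derivative is
\begin{equation*}
	-\big(\tfrac{3p}2-2\big)\cdot\tfrac{d}{d\tau}\big(\log|\tau|\big)\cdot\tfrac14|\tau|^{-(\frac{3p}2-2)}\|\nabla v\|_2^2,
\end{equation*}
which has a favorable sign backward in time. This quantity equals the original pseudo-conformal energy $\|(x+it\nabla)u\|^2/8+\frac{t^2}{p+2}\|u\|_{p+2}^{p+2}$ up to powers, and it is monotone for $p\ge\frac43$. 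The resulting bound is $\|\nabla v(\tau)\|_2^2\lesssim |\tau|^{\frac{3p}2-2}\tilde E(\tau_1)$ on compact subintervals $[\tau_1,\tau_2]\subset(-\infty,0)$. Near $\tau=0$, where $\tilde E$ may be singular, we use the plain energy $E$ on the local interval instead. These two bounds give a locally uniform $H^1$ bound on every compact subset of $(-\infty,0]$.

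\textbf{Continuation.} For $p<4$, the a priori bound together with the blow-up alternative extends $v$ to $(-\infty,0]$. For $p=4$ (energy-critical, with coefficient $\tau^4$) the blow-up alternative is not in terms of $\|v\|_{H^1}$ alone. Here we would invoke global spacetime bounds for defocusing energy-critical NLS (Colliander–Keel–Staffilani–Takaoka–Tao) on each bounded $\tau$-interval, after a perturbative treatment of the smooth, nonvanishing coefficient away from $0$. We expect this to be the main obstacle. Finally, transforming back yields $u$ with the stated continuity and convergence, and uniqueness follows from uniqueness for $v$.
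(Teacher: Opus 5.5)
Your proposal is correct and follows essentially the paper's proof. Your $\tilde E$ is exactly the paper's pseudo-conformal energy $\wt{\E}$ written in the variable $|\tau|$; you use the unconjugated transform with $\tau=-1/t$ and evolve backward, whereas the paper's conjugated transform runs forward from $t=0$. The $p=4$ endpoint is handled in the paper just as you anticipate: freeze the coefficient at $t_1$, rescale to the defocusing energy-critical NLS, invoke the CKSTT spacetime bounds, and treat the error $(t^4-t_1^4)|\V|^4\V$ perturbatively on short intervals.

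One labelling slip: since $\tilde E$ is nondecreasing in $\tau$, the gradient bound on a backward interval $[\tau_2,\tau_1]$ with $\tau_2<\tau_1<0$ comes from $\tilde E$ at the endpoint nearer $0$. That is what your monotonicity argument actually uses.
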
	
	
	The preceding argument applies to the noncritical weighted setting. At the scaling-critical regularity, however, this argument no longer provides the control required to extend the local final-data solution globally. We nevertheless overcome this difficulty for the radial three-dimensional quadratic NLS. Indeed, when \(d=3\) and \(p=1\), we have \(s_c=-\frac12\), so that \(\dot{\Sigma}^{\frac12}(\mathbb R^3)\) is scaling critical. Global well-posedness for the corresponding initial data problem with radial data was established in \cite{SW24quaNLS}. Our next main result establishes the corresponding global final-data theory at the same critical regularity.
	
	\begin{thm}[3D quadratic NLS]\label{thm:final}
		Let \(d=3\) and $p=1$. Suppose that $u_+\in \dot\Sigma^{\frac{1}{2}}(\R^3)$ and $u_+$ is radial. Then, equation \eqref{eq:nls} admits a unique global solution $u$ such that $e^{-\frac12it\De}u\in C((0,+\I); \dot\Sigma^{\frac{1}{2}}(\R^3))$ and 
		\begin{equation*}
			\lim_{t\ra+\I} \normb{e^{-\frac12it\De}u-u_+}_{\dot\Sigma^{\frac{1}{2}}(\R^3)} = 0.
		\end{equation*}
		Moreover, the solution has additional weighted regularity: for any $0\le s\le \frac12$,
		\begin{equation*}
			e^{-\frac12it\De}u - u_+ \in C((0,+\I); \dot\Sigma^{s}(\R^3)).
		\end{equation*}
	\end{thm}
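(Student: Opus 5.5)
We will reduce the final data problem to an initial data problem through the pseudo-conformal transform and then use the known global theory for the radial quadratic NLS at the critical weighted regularity. For $t>0$ set
\begin{equation*}
v(\tau,y)=\frac{1}{(i\tau)^{3/2}}\,e^{\frac{i|y|^2}{2\tau}}\,\wb{u}\Big(\frac1\tau,\frac y\tau\Big),\qquad \tau=\frac1t .
\end{equation*}
A direct computation shows that $u$ solves \eqref{eq:nls} with $d=3$, $p=1$ exactly when $v$ solves the nonautonomous equation
\begin{equation*}
i\pd_\tau v+\tfrac12\De v=\tau^{-\frac12}|v|v .
\end{equation*}
Here the weight is $\tau^{\frac{3p}{2}-2}=\tau^{-1/2}$, which is integrable near $\tau=0$. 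Under this transform, the final-state condition $e^{-\frac12it\De}u(t)\to u_+$ in $\dot\Sigma^{1/2}$ becomes $v(\tau)\to \wb{\F u_+}$ (up to harmless constants and conjugation) in $\dot H^{1/2}$ as $\tau\to0^+$. The identity behind this is $\|e^{-\frac12it\De}u(t)\|_{\dot\Sigma^{s}}=\||\nabla|^s\,\mathcal{F}(\cdot)\|$ together with the factorization $e^{\frac12 it\De}=M\,D\,\F\,M$. The global problem on $t\in(0,\infty)$ is therefore equivalent to solving an $\dot H^{1/2}$ radial initial value problem at $\tau=0$ for $\tau\in(0,\infty)$. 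Radiality is preserved, since $\F u_+$ is radial.

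\emph{Local step.} Near $\tau=0$, i.e.\ $t$ large, we use Theorem~\ref{thm:final-local}(1) with $s=\frac12=-s_c$ and $p=1\le \frac{4}{3-1}=2$. This gives existence, uniqueness and continuity on $[T,\infty)$. Equivalently, in the $v$-picture, a Strichartz contraction at $\dot H^{1/2}$ works because $\tau^{-1/2}$ is integrable and the nonlinearity is $\dot H^{1/2}$-critical-compatible.

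\emph{Continuation.} To continue backward in $t$ to every finite $t>0$, i.e.\ forward in $\tau$ to every finite $\tau$, we need that $\|v(\tau)\|_{\dot H^{1/2}}$, or a suitable scattering-size norm, does not blow up on compact subintervals of $(0,\infty)$. On any $[\tau_0,\tau_1]$ with $\tau_0>0$ the coefficient $\tau^{-1/2}$ is bounded above and below. The plan is to transfer back and run the argument of \cite{SW24quaNLS} for radial $\dot\Sigma^{1/2}$ initial data of the quadratic NLS. Concretely, at time $t_1=1/\tau_0$ the local solution gives $u(t_1)$ with $e^{-\frac12 it_1\De}u(t_1)\in\dot\Sigma^{1/2}$, i.e.\ $u(t_1)\in e^{\frac12 it_1\De}\dot\Sigma^{1/2}$. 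Up to translating time, this is precisely the class of initial data handled there. Their global radial result then continues $u$ backward on $(0,t_1]$ in that class, and time reversal symmetry (conjugation) lets us apply it backward. Uniqueness follows by gluing the uniqueness statements in both regimes.

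\emph{Main obstacle.} The hardest step is matching function spaces. The initial-data result in \cite{SW24quaNLS} is posed for data $u_0\in\dot\Sigma^{1/2}$ at $t=0$, whereas at $t_1>0$ we have $e^{-\frac12 it_1\De}u(t_1)\in\dot\Sigma^{1/2}$. The clean fix I would try first is to use the translated profile: define $w(t)=u(t+t_1)$ and work with $e^{-\frac12 it\De}w$, noting that the a priori estimates in \cite{SW24quaNLS} are stated for the profile $e^{-\frac12it\De}u$ and are time-translation covariant. If that fails, I would redo their radial Morawetz-type / weighted-Strichartz bootstrap in the $v$-variables, with the bounded weight $\tau^{-1/2}$ on compact $\tau$-intervals.

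\emph{Additional regularity.} For $0\le s\le\frac12$, write the Duhamel formula
\begin{equation*}
e^{-\frac12it\De}u(t)-u_+=i\int_t^\infty e^{-\frac12 i\sigma\De}\big(|u|u\big)(\sigma)\,d\sigma .
\end{equation*}
We estimate this in $\dot\Sigma^{s}$ via $\|xe^{-\frac12i\sigma\De}F\|=\|(x+i\sigma\nabla)F\|$-type identities interpolated between $s=0$ (mass-level Strichartz, where the quadratic term is controlled by the space-time norms from the construction) and $s=\frac12$. Continuity in $t$ then follows from dominated convergence.
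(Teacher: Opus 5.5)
The pseudo-conformal reduction and the local step near $\tau=0$ agree with the paper. The continuation step, however, is the whole difficulty, and it rests on a false claim. The class $e^{-\frac12 it\Delta}u(t)\in\dot\Sigma^{1/2}$, i.e.\ $|J(t)|^{1/2}u(t)\in L^2$ with $J(t)=x+it\nabla$, is tied to the time origin and is not time-translation covariant. For $w(t)=u(t+t_1)$ you have $e^{-\frac12 it\Delta}w(t)=e^{\frac12 it_1\Delta}\bigl(e^{-\frac12 i(t+t_1)\Delta}u(t+t_1)\bigr)$, and
\[
\bigl\||x|^{1/2}e^{\frac12 it_1\Delta}g\bigr\|_{L^2}=t_1^{1/2}\bigl\||\nabla|^{1/2}\bigl(e^{\frac{i|x|^2}{2t_1}}g\bigr)\bigr\|_{L^2}.
\]
This requires half a derivative on the profile $g$, which does not follow from $u_+\in\dot\Sigma^{1/2}$. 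So $w(0)=u(t_1)$ is in general not in $\dot\Sigma^{1/2}$, and the result of \cite{SW24quaNLS} cannot be invoked. That result takes data in $\dot\Sigma^{1/2}$ at the origin of time, which in the $\U$-variables is a datum prescribed at $\tau=+\infty$. Even its output class, $e^{-\frac12 i(t-t_1)\Delta}u(t)\in\dot\Sigma^{1/2}$, is not the one the theorem requires.

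The boundedness of $\tau^{-1/2}$ on $[\tau_0,T_0]$ does not help by itself. It only makes the local existence time depend on $\|\U(\tau)\|_{\dot H^{1/2}}$. Continuing requires an a priori bound on that norm, and no conservation law supplies one, since $\U$ is in general neither in $L^2$ nor in $\dot H^1$. Your fallback, ``redo their bootstrap'', does not say where such a bound would come from.

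The missing idea is a splitting of the final datum together with an almost-conserved mass.
\begin{enumerate}
\item Choose $R_-<1<R_+$ so that $v_+=(1-\rho_{R_-,R_+})u_+$ satisfies $\||x|^{1/2}v_+\|_{L^2}\le\delta_0$. Then $w_+=\rho_{R_-,R_+}u_+$ is supported on an annulus, so $\W_0=\mathcal F^{-1}\overline{w_+}\in L^2$.
\item Evolve the small piece linearly, $\V=S(t)\V_0$. It is $O(\delta_0)$ in every $\dot H^{1/2}$-Strichartz norm and, because $u_+$ is radial, also in the endpoint norm $L^2_tL^\infty_x$.
\item The remainder $\W=\U-\V$ has finite mass. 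Since $\mathrm{Im}\int|\W|\W\overline{\W}\,dx=0$,
\[
\M(t)\le\M(t_0)+C\int_{t_0}^{t}\!\!\int_{\mathbb R^3}\tau^{-\frac12}\bigl(|\V|+|\W|\bigr)|\V||\W|\,dx\,d\tau ,
\]
so every error term carries a factor of $\V$. The $|\W|^2|\V|$ term is exactly where the radial endpoint estimate is used.
\item Choosing $\delta_0$ after $T_0$ and $t_0$ closes the bootstrap $\M\le 2KA_0$ on $[t_0,T_0]$.
\item Since $\tau^{-1/2}\le t_0^{-1/2}$ there, the $L^2$-level problem for $\W$ is subcritical. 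Hence $\W$ extends to $[t_0,T_0]$ with $L^{8/3}_tL^4_x$ bounds.
\item These bounds, together with the smallness of $\V$, give persistence of $\dot H^{1/2}$ regularity of $\U$ on short intervals of uniform length.
\end{enumerate}
Once global critical bounds of this kind are available, your Duhamel/interpolation argument for the additional weighted regularity is fine. The paper obtains that regularity from $\U-S(t)\U_0=\W-S(t)\W_0\in C(L^2)$.
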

	
	\begin{remark}
		Theorem \ref{thm:final} can be viewed as the backward-time counterpart of the corresponding result in \cite{SW24quaNLS}. Although the two proofs are parallel, the proof of Theorem \ref{thm:final} relies on an almost-conservation law for the mass rather than on the pseudo-conformal energy used in Proposition \ref{thm:gwp}.
	\end{remark}

	\begin{remark}[Asymptotic completeness]
		It is natural to conjecture that asymptotic completeness holds for the three-dimensional defocusing quadratic NLS in the critical space $\dot\Sigma^{\frac12}(\mathbb R^3)$. More precisely, one expects the wave operators
		\begin{equation*}
			\Omega_\pm:u_\pm\longmapsto u(0)
		\end{equation*}
		to be well-defined and onto $\dot\Sigma^{\frac12}(\mathbb R^3)$. Killip, Masaki, Murphy, and Visan \cite{KMMV17NoDEA} proved that a solution is global and scatters whenever its critical weighted $\dot\Sigma^{\frac12}$-norm remains bounded on its maximal lifespan. Consequently, asymptotic completeness would follow once such a bound is established for every maximal-lifespan solution.
	\end{remark}

	\subsection{Outline}
	The remainder of the paper is organized as follows. Section \ref{sec:prel} collects the notation and preliminary estimates. In Section \ref{sec:local result}, we prove the local well-posedness and ill-posedness statements in Theorem \ref{thm:final-local}. Section \ref{sec:finite pce} proves Proposition~\ref{thm:gwp} by means of the pseudo-conformal energy, while Section \ref{sec: 3DquaNLS} proves the critical global final-data result, Theorem~\ref{thm:final}.

	\vspace{2cm}
	
	\section{Preliminaries}\label{sec:prel}
	
	\vspace{0.5cm}
	
	\subsection{Basic notation} 
	For any $z\in\C$, we define $\re z$ and $\im z$ as the real and imaginary parts of $z$, respectively. 
	$C>0$ represents some constant that may vary from line to line. We write $C(a)>0$ for some constant depending on parameter $a$. If $f\loe C g$, we write $f\lsm g$. If $f\loe C g$ and $g\loe C f$, we write $f\sim g$. If $C=C(a)$ depends on $a$, then we write $f\lsm_a g$ and $f\sim_a g$, respectively. 
	
	
	We use $\wh f$ or $\F f$ to denote the Fourier transform of $f$:
	\begin{equation*}
		\wh f(\xi)=\F f(\xi):= \rev{(2\pi)^{d/2}}\int_{\R^d} e^{-ix\cdot\xi}f(x)\rm dx.
	\end{equation*}
	We also define the inverse Fourier transform:
	\begin{equation*}
		\F^{-1} g(x):= \rev{(2\pi)^{d/2}}\int_{\R^d} e^{ix\cdot\xi}g(\xi)\rm d\xi.
	\end{equation*}
	Using the Fourier transform, we can define the fractional derivative $\abs{\nabla} := \F^{-1}|\xi|\F $ and $\abs{\nabla}^s:=\F^{-1}|\xi|^s\F $.  

	We denote by $e^{\frac12it\De}u_0$ the solution to 
	\EQ{
		\left\{ \aligned
		&i\pd_t u + \frac12\De u = 0, \\
		& u(0,x) = u_0,
		\endaligned
		\right.
	}
	and set $S(t):=e^{\frac12it\De}$. Then, we have the explicit formula
	\begin{equation}\label{linear-explicit}
		S(t)u_0 = \frac{1}{(2\pi it)^{d/2}} \int_{\R^d} e^{i\frac{|x-y|^2}{2t}} u_0(y)\dy .
	\end{equation}
	
	Given $1\loe p \loe \I$, $L^p(\R^d)$ denotes the usual Lebesgue space. For $1\le p<\I$ and $1\le q\le\I$, the Lorentz space $L^{p,q}(\R^d)$ is defined by its quasi-norm
	\begin{equation*}
		\norm{f}_{L^{p,q}(\R^d)} := p^{\frac1q} \normb{ \la \absb{\fbrk{x\in\R^d:|f(x)|>\la}}^{\frac 1p} }_{L^q((0,\I),\frac{\dd\la}{\la})}.
	\end{equation*}
	We use the convention \(L^{\infty,q}:=L^\infty\).

	We call an exponent pair $(q,r)\in\R^2$ \textit{admissible} if $\frac{2}{q}+\frac{d}{r}=\half d$, $2\loe q\loe\I$, $2\loe r\loe\I$, and $(q,r,d)\ne(2,\I,2)$. We also say $(q,r)$ is \textit{acceptable} if $1\leq q< \infty$ and $\frac{1}{q}<\frac{d}{2}-\frac{d}{r}$, or $(q,r)=(\infty, 2)$.

	\subsection{Pseudo-conformal transform}
	Now, we recall some notation and the properties of the pseudo-conformal transform. We define the pseudo-conformal  transform $\mathcal T$ to be 
	\begin{equation}\label{def:PC-transform}
		\mathcal T f(t,x) := \frac{1}{(it)^{d/2}} e^{\frac{i|x|^2}{2t}} \bar f\left( \frac{1}{t},\frac{x}{t} \right).
	\end{equation}
	Then the inverse pseudo-conformal transform satisfies  
	$$
	\mathcal T^{-1}=\mathcal T.
	$$
	
	We define
	\begin{equation*}
		M(t) := e^{\frac{i|x|^2}{2t}}\text{, and } J(t):=x+it\nabla.
	\end{equation*}
	Recall the explicit formula for $S(t)$ in \eqref{linear-explicit}:
	\begin{equation*}
		S(t)u_0 = \frac{1}{(2\pi it)^{d/2}} \int_{\R^d} e^{i\frac{|x-y|^2}{2t}} u_0(y)\dy = \frac{1}{(it)^{d/2}} M(t) \F(M(t)u_0)\left( \frac xt \right).
	\end{equation*}
	This yields that for $f(x):\R^d\ra\C$,
	\begin{equation}\label{eq:pc-TS=SF}
		\TT S(t)f=S(t)\F^{-1}\wb f,
	\end{equation}
	and the following equivalent representations of the vector field:
	\begin{equation*}
		J(t)=S(t)xS(-t)=M(t)(it\nabla)M(-t).
	\end{equation*}
	Therefore, we define the fractional vector field for any $s>0$,
	\begin{equation*}
		|J(t)|^{s}:= S(t)|x|^sS(-t).
	\end{equation*}
	Similarly, $|J(t)|^{s} = M(t)|t\nabla|^sM(-t)$. Using this formula, for a spacetime function $u(t,x)$, we have
	\begin{equation}\label{eq:pc-nablaT=SJ}
		|\nabla|^s\TT u =\TT |J(t)|^s u.
	\end{equation}
	
	Now, we give two basic facts for the pseudo-conformal transform:
	\begin{remark}
		By \eqref{eq:pc-TS=SF}, if $u$ is the solution of \eqref{eq:nls} with final data $u_+\in\dot\Sigma^s(\R^d)$, then $\U=\TT u$ solves
		\begin{equation}\label{eq:nls-initial}
			i\pd_t \U + \frac12 \De \U = t^{\frac{dp}{2}-2}|\U|^p\U,
		\end{equation}
		with $\U(0)=\F^{-1} \wb u_+\in \dot{H}_x^s(\R^d)$. Moreover, by \eqref{eq:pc-nablaT=SJ}, for any $0<a<b<\I$, we have that
		\EQ{
			\U \in C([a,b];\dot H^s(\R^d)) \quad \Longleftrightarrow & \quad |J(t)|^su \in C([b^{-1},a^{-1}];L_x^2(\R^d)) \\
			\Longleftrightarrow & \quad S(-t)u \in C([b^{-1}, a^{-1}];\dot \Sigma^s(\R^d)).
		}
	\end{remark}
	
	\begin{remark}\label{rem:spacetime-exponent-transform}
		Let $u(t,x)$ be a function and
		\begin{equation*}
			\U(s,y)=\TT u(s,y)=  \frac{1}{(is)^{d/2}} e^{\frac{i|y|^2}{2s}} \wb u \left( \frac{1}{s},\frac{y}{s} \right).
		\end{equation*}
		Then, for any $1\le q,r\le\I$, we have
		\begin{equation*}
			\norm{u}_{L_t^q L_x^r(\R\times\R^d)} = \normb{|s|^{\frac d2-\frac2q-\frac dr}\U(s,y)}_{L_s^q L_y^r(\R\times\R^d)}.
		\end{equation*}
	\end{remark}
	
	\subsection{Useful lemmas}
	
	\begin{lem}[Kato-Ponce's inequality, \cite{Li19RMI}]\label{lem:kato-ponce-inequality}
		Let $0<s<1$, $1<p\leq\infty$, and $1< p_1, p_2, p_3, p_4 \leq \infty$ with $\frac{1}{p}=\frac{1}{p_1}+\frac{1}{p_2}$ and $\frac{1}{p}=\frac{1}{p_3}+\frac{1}{p_4}$, then 
		\begin{equation*}
			\big\| |\nabla|^s(fg) \big\|_{L^p}\lesssim \big\| |\nabla|^s f \big\|_{L^{p_1}}\|g\|_{L^{p_2}}+\big\| |\nabla|^s g \big\|_{L^{p_3}}\|f\|_{L^{p_4}}.
		\end{equation*}
	\end{lem}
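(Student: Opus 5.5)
Since this is the fractional Leibniz rule cited from \cite{Li19RMI}, the plan is to reproduce the standard Littlewood--Paley paraproduct argument. The endpoint exponents are where the real work lies. Fix a Littlewood--Paley decomposition $f=\sum_k P_kf$ and split
\begin{equation*}
fg=\sum_k P_{<k-3}f\,P_kg+\sum_k P_kf\,P_{<k-3}g+\sum_{|k-k'|\le3}P_kf\,P_{k'}g=:\Pi_1+\Pi_2+\Pi_3 .
\end{equation*}
By symmetry, $\Pi_2$ is treated like $\Pi_1$ with the roles of $f,g$ swapped. So it suffices to show that $\Pi_1$ is bounded by $\||\nabla|^sg\|_{L^{p_3}}\|f\|_{L^{p_4}}$, and that $\Pi_3$ is bounded by either term.

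For $\Pi_1$, each summand has Fourier support in an annulus $|\xi|\sim2^k$. Hence $|\nabla|^s\Pi_1=\sum_k\tilde P_k\big(P_{<k-3}f\cdot 2^{ks}\,\tilde P^{(s)}_kg\big)$, where $\tilde P^{(s)}_k=2^{-ks}|\nabla|^sP_k$ is again a Littlewood--Paley-type multiplier. For $1<p<\infty$, the Littlewood--Paley square function estimate gives
\begin{equation*}
\big\||\nabla|^s\Pi_1\big\|_{L^p}\lesssim\Big\|\Big(\sum_k|P_{<k-3}f|^2\,|2^{ks}\tilde P^{(s)}_kg|^2\Big)^{1/2}\Big\|_{L^p}\le\Big\|Mf\cdot\Big(\sum_k|2^{ks}\tilde P_k^{(s)}g|^2\Big)^{1/2}\Big\|_{L^p}.
\end{equation*}
Hölder, the maximal inequality for $Mf$ (valid also when $p_4=\infty$), and the square-function characterization of $\||\nabla|^sg\|_{L^{p_3}}$ (for $1<p_3<\infty$) then give the claim.

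For $\Pi_3$, the output frequency of the $k$-th summand lies in $\{|\xi|\lesssim2^k\}$. Decomposing $|\nabla|^s\Pi_3=\sum_j\sum_{k\ge j-5}P_j|\nabla|^s(P_kf\,\tilde P_kg)$ and using $2^{js}=2^{ks}2^{-(k-j)s}$ with $s>0$, the geometric factor is summable in $k-j$. After Cauchy--Schwarz and the square function bound, this is controlled by $\|(\sum_k|2^{ks}P_kf|^2)^{1/2}\|_{L^{p_1}}\|Mg\|_{L^{p_2}}$, which gives the first term on the right.

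The main obstacle is the endpoint cases where $p=\infty$, or where the differentiated factor is measured in $L^\infty$ ($p_1$ or $p_3=\infty$). There the square-function characterization fails, and Coifman--Meyer theory alone does not close. Here I would use $0<s<1$ through the singular-integral representation
\begin{equation*}
|\nabla|^sh(x)=c_{d,s}\,\mathrm{p.v.}\int\frac{h(x)-h(y)}{|x-y|^{d+s}}\dy .
\end{equation*}
Together with $f(x)g(x)-f(y)g(y)=f(x)(g(x)-g(y))+g(y)(f(x)-f(y))$, this gives
\begin{equation*}
|\nabla|^s(fg)=f\,|\nabla|^sg+g\,|\nabla|^sf-c\int\frac{(f(x)-f(y))(g(x)-g(y))}{|x-y|^{d+s}}\dy .
\end{equation*}
The first two terms are directly acceptable in every exponent configuration. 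For the commutator integral, I would split it dyadically in $|x-y|\sim2^{-j}$ and frequency-localize $f,g$, following Bourgain--Li/Li. The integral then reduces to paraproduct pieces in which both factors carry difference quotients, and the decay in $s$ (resp.\ $1-s$) allows the sum over scales to be bounded, with $L^\infty$ handled through $BMO$-type or maximal-function estimates. This endpoint step is the one I expect to take the most care, and for the applications in the paper only non-endpoint exponents are needed.
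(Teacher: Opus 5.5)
The paper does not prove this lemma; it only cites \cite{Li19RMI}. Your paraproduct argument is a correct proof in the non-endpoint range $1<p<\infty$, $1<p_1,p_3<\infty$, with $p_2,p_4\le\infty$ allowed. This is the only range the paper needs: its single use, in Lemma~\ref{lem: Nonlinear estimate}, has order $\theta\in(0,1)$ and all exponents in $(1,\infty)$.

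Two details in that range need repair.
\begin{itemize}
\item Your formula for $|\nabla|^s\Pi_1$ is not an identity, because $|\nabla|^s$ does not commute with multiplication by $P_{<k-3}f$. The correct statement is $|\nabla|^s(P_{<k-3}f\,P_kg)=\tilde P^{(s)}_k\big(2^{ks}P_{<k-3}f\,P_kg\big)$ with $\tilde P^{(s)}_k:=2^{-ks}|\nabla|^s\tilde P_k$. The dual square-function bound $\|\sum_k\tilde P^{(s)}_kF_k\|_{L^p}\lesssim\|(\sum_k|F_k|^2)^{1/2}\|_{L^p}$ then gives exactly the estimate you wrote.
\item In $\Pi_3$, the phrase ``Cauchy--Schwarz and the square function bound'' needs to be made precise. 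Either regroup as $|\nabla|^s\Pi_3=\sum_{\ell\ge-5}2^{-\ell s}\sum_k2^{-(k-\ell)s}|\nabla|^sP_{k-\ell}\big(2^{ks}P_kf\,\tilde P_kg\big)$ and apply the dual bound for each fixed $\ell$, or invoke Fefferman--Stein.
\end{itemize}

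The genuine gap is the endpoint cases $p=\infty$, $p_1=\infty$ or $p_3=\infty$, which the statement explicitly includes; your last paragraph does not prove them. Your commutator identity is correct. However, $f|\nabla|^sg$ and $g|\nabla|^sf$ are trivially bounded, so bounding $\int\frac{(f(x)-f(y))(g(x)-g(y))}{|x-y|^{d+s}}\,dy$ in $L^p$ is \emph{equivalent} to the lemma. The phrase ``split dyadically \dots\ BMO-type or maximal-function estimates'' names no actual estimate. It is also unclear how this would bound the commutator when the two H\"older pairs differ, e.g.\ $p_1=\infty$ and $p_3=p_4=2p$.

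Your diagnosis is also inaccurate for $p<\infty$: the paraproduct method does close there.
\begin{itemize}
\item If $p_1=\infty$, treat $\Pi_3$ by the regrouped sum above. Put $\sup_k2^{ks}\|P_kf\|_{L^\infty}\lesssim\||\nabla|^sf\|_{L^\infty}$ on $f$ and the square function on $g\in L^p$.
\item For $\Pi_2$, write $|\nabla|^s\Pi_2=\sum_k\tilde P^{(s)}_k\big(Q_kh\cdot P_{<k-3}g\big)$ with $h=|\nabla|^sf$ and $Q_k=2^{ks}|\nabla|^{-s}P_k$. This is a classical paraproduct with symbol $h\in L^\infty\subset\mathrm{BMO}$, hence bounded on $L^p$. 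The tools are a Carleson-measure $L^2$ bound plus Calder\'on--Zygmund theory, or equivalently the Coifman--Meyer theorem with one input in $L^\infty$.
\item The case $p_3=\infty$ is symmetric.
\end{itemize}

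Only $p=\infty$, which forces $p_1=\dots=p_4=\infty$, escapes paraproduct bounds. There your identity closes in a few lines because $0<s<1$:
\begin{itemize}
\item From $\|P_kf\|_{L^\infty}\lesssim2^{-ks}\||\nabla|^sf\|_{L^\infty}$ and Bernstein, with both geometric sums converging since $0<s<1$, one gets $|f(x)-f(y)|\lesssim\min\{|x-y|^s\||\nabla|^sf\|_{L^\infty},\|f\|_{L^\infty}\}$. The same holds for $g$.
\item Splitting the $y$-integral at $|x-y|=R$ bounds the commutator by $R^s\||\nabla|^sf\|_{L^\infty}\||\nabla|^sg\|_{L^\infty}+R^{-s}\|f\|_{L^\infty}\|g\|_{L^\infty}$.
\item Optimizing in $R$ and applying AM--GM gives the claim.
\end{itemize}
With these repairs your proposal becomes a complete proof of the lemma as stated.
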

	
	\begin{lem}[Fractional chain rule, \cite{Vis07Duke}]\label{lem:Frac_chain-alphaorder}
		Let $G$ be a H\"older continuous function of order $0<\alpha<1$. Then, for every $0<s<\alpha$, $1<p<\I$, and $\frac s \al<\sigma<1$, we have
		\begin{equation*}
			\norm{\abs{\nabla}^sG(u)}_{L_x^p}\lsm\norm{|u|^{\al-\frac s \si}}_{L_x^{p_1}}\norm{\abs{\nabla}^{\si}u}_{L_x^{\frac{s}{\si}p_2}}^{\frac{s}{\si}},
		\end{equation*}
		provided $\rev p=\rev{p_1}+\rev{p_2}$ and $(1-\frac{s}{\al\si})p_1>1$.
	\end{lem}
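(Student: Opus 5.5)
The last statement is Lemma~\ref{lem:Frac_chain-alphaorder}, the fractional chain rule for H\"older continuous $G$. It is quoted from \cite{Vis07Duke}, but I would prove it directly from a difference characterization of $|\nabla|^s$ in $L^p$.

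\emph{Step 1 (characterization).} For $0<s<1$ and $1<p<\infty$ I will use
\begin{equation*}
\norm{|\nabla|^s f}_{L^p} \sim \normB{\Big(\int_0^\infty \Big(r^{-s}\sup_{|y|\le r}\ldots\Big)^2 \frac{\dd r}{r}\Big)^{1/2}}_{L^p},
\end{equation*}
where the omitted quantity is a square-function or maximal-type difference operator. A convenient concrete choice is the Strichartz-type square function $\mathcal D_s f(x)=\big(\int_0^\infty (r^{-s}\,\mathrm{avg}_{|y|\le1}|f(x+ry)-f(x)|)^2\frac{\dd r}{r}\big)^{1/2}$. For this operator, $\norm{\mathcal D_s f}_{L^p}\sim\norm{|\nabla|^s f}_{L^p}$ on the range $1<p<\infty$. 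Only the upper bound $\norm{|\nabla|^s G(u)}_{L^p}\lesssim\norm{\mathcal D_s G(u)}_{L^p}$ is actually needed.

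\emph{Step 2 (pointwise split).} Fix $x$ and $r$. The H\"older continuity gives $|G(u(x+ry))-G(u(x))|\lesssim |u(x+ry)-u(x)|^\alpha$. I split this difference in two ways.

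Near the diagonal I use a pointwise H\"older-type bound for $u$ at scale $r$. The standard form is
\begin{equation*}
|u(x+ry)-u(x)|\lesssim r^{\sigma}\big(M|\nabla|^\sigma u(x)+M|\nabla|^\sigma u(x+ry)\big),
\end{equation*}
a fractional Haj{\l}asz-type inequality, valid in a suitable averaged sense.

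For all $r$ I also have the trivial bound $|u(x+ry)-u(x)|\le |u(x+ry)|+|u(x)|$, and I write $|u(x+ry)-u(x)|^{\alpha}=|\cdot|^{\alpha-s/\sigma}|\cdot|^{s/\sigma}$. I apply the trivial (maximal-function) bound to the factor $|\cdot|^{\alpha-s/\sigma}$ and the fractional bound to the factor $|\cdot|^{s/\sigma}$. This gives
\begin{equation*}
r^{-s}\,\mathrm{avg}\,|G(u(x+ry))-G(u(x))|\lesssim r^{-s+s}\,\big(M(|u|^{\alpha-s/\sigma})(x)+\ldots\big)\big(M|\nabla|^\sigma u\big)^{s/\sigma}.
\end{equation*}
The exponent of $r$ in this bound is zero, so it is not integrable in $\frac{\dd r}{r}$ as it stands. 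To fix this I interpolate each term separately in the two regimes $r<R(x)$ and $r>R(x)$, using the full fractional bound $r^{\sigma\alpha}(M|\nabla|^\sigma u)^\alpha$ for small $r$. The threshold $R(x)$ is chosen to balance the two regimes, namely $R^\sigma M|\nabla|^\sigma u\sim |u|$. With this choice the geometric sums in $r$ converge because $s<\sigma\alpha$, which is exactly the hypothesis $\frac s\alpha<\sigma$. The result is the pointwise bound
\begin{equation*}
\mathcal D_sG(u)(x)\lesssim M\big(|u|^{\alpha-\frac s\sigma}\big)^{\sim}(x)\,\big(M|\nabla|^\sigma u(x)\big)^{\frac s\sigma}.
\end{equation*}
Here $(\cdot)^{\sim}$ denotes a maximal function raised to a power slightly above one, which I need in order to absorb the $x+ry$ terms.

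\emph{Step 3 (H\"older and maximal inequalities).} I apply H\"older's inequality with $\frac1p=\frac1{p_1}+\frac1{p_2}$. The second factor is handled by the Hardy--Littlewood maximal theorem in $L^{\frac s\sigma p_2}$, which requires $\frac s\sigma p_2>1$. For the first factor, boundedness of the maximal operator acting on $|u|^{\alpha-s/\sigma}$ after raising to the power $\frac{\alpha\sigma}{\alpha\sigma - s}$-type exponent requires $(1-\frac{s}{\alpha\sigma})p_1>1$. This is the stated condition, and I would double-check that the exponent bookkeeping produces exactly this form.

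\emph{Main obstacle.} I expect the hardest step to be the pointwise estimate in Step 2. The difficulties are controlling the nonlocal points $u(x+ry)$ by maximal functions at $x$ and choosing $R(x)$ so that both $r$-regimes are summable. Both difficulties disappear only under the constraints $s<\alpha\sigma$ and $(1-\frac{s}{\alpha\sigma})p_1>1$, so these are the places where the argument is most likely to need adjustment.
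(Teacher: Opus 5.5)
The paper gives no proof of this lemma; it quotes it from \cite{Vis07Duke}. Your overall architecture matches the argument there: an upper bound by Strichartz's square function $\mathcal D_s$, H\"older continuity of $G$, two regimes in $r$ with an $x$-dependent threshold, then H\"older and Hardy--Littlewood. But Step 2, which you rightly flag, does not close as written.

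\textbf{The large-$r$ bound and the threshold.} For $r>R(x)$ the only available estimate is
\begin{equation*}
\mathrm{avg}_{|y|\le 1}|u(x+ry)-u(x)|^{\alpha}\le \mathrm{avg}_{|y|\le 1}|u(x+ry)|^{\alpha}+|u(x)|^{\alpha}\lesssim M(|u|^{\alpha})(x).
\end{equation*}
The average of $|u(x+ry)|^\alpha$ is not controlled by $|u(x)|^\alpha$. So the threshold must balance $r^{\alpha\sigma}[M(|\nabla|^\sigma u)(x)]^{\alpha}$ against $M(|u|^\alpha)(x)$, not against $|u(x)|^\alpha$. With your choice $R^\sigma M|\nabla|^\sigma u\sim|u|$, at a zero of $u$ you get $R=0$, and the large-$r$ regime, which is then all of $(0,\infty)$, is bounded by nothing involving $|u(x)|$.

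\textbf{The small-$r$ bound.} For $r<R(x)$, use Jensen ($\alpha<1$) together with the averaged estimate $\mathrm{avg}_{|y|\le1}|u(x+ry)-u(x)|\lesssim r^{\sigma}M(|\nabla|^\sigma u)(x)$. This holds for $0<\sigma<1$: write $u=c\,I_\sigma(|\nabla|^\sigma u)$ with $I_\sigma$ the Riesz potential and split the kernel at $|x-z|\sim r$. It has no $x+ry$ term, and your preliminary factorization $|\cdot|^{\alpha-s/\sigma}|\cdot|^{s/\sigma}$ becomes unnecessary.

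\textbf{The resulting pointwise bound.} Using only $0<s<\alpha\sigma$, one gets
\begin{equation*}
\mathcal D_s(G(u))(x)^2\lesssim R^{2(\alpha\sigma-s)}\big[M(|\nabla|^\sigma u)(x)\big]^{2\alpha}+R^{-2s}\big[M(|u|^\alpha)(x)\big]^2 .
\end{equation*}
The choice $R(x)^{\alpha\sigma}[M(|\nabla|^\sigma u)(x)]^{\alpha}=M(|u|^\alpha)(x)$ then yields
\begin{equation*}
\mathcal D_s(G(u))(x)\lesssim \big[M(|u|^{\alpha})(x)\big]^{1-\frac{s}{\alpha\sigma}}\big[M(|\nabla|^{\sigma}u)(x)\big]^{\frac{s}{\sigma}}.
\end{equation*}

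So your $(\cdot)^{\sim}$ must be $(M(f^q))^{1/q}$ with $f=|u|^{\alpha-s/\sigma}$ and exactly $q=\frac{\alpha\sigma}{\alpha\sigma-s}$. This exponent is forced by the optimization in $R$. It is not ``slightly above one'': it blows up as $s\uparrow\alpha\sigma$. It comes from the large-$r$ regime, not from absorbing the $x+ry$ terms of the two-point inequality. If you use that inequality instead of the averaged one, those terms only replace $M(|\nabla|^\sigma u)$ by $M(M(|\nabla|^\sigma u))$ in the second factor, which is harmless.

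Step 3 is then clean:
\begin{equation*}
\big\|[M(|u|^\alpha)]^{1-\frac{s}{\alpha\sigma}}\big\|_{L^{p_1}}=\|M(|u|^\alpha)\|_{L^{(1-\frac{s}{\alpha\sigma})p_1}}^{1-\frac{s}{\alpha\sigma}}\lesssim\big\||u|^{\alpha-\frac s\sigma}\big\|_{L^{p_1}}
\end{equation*}
exactly under $(1-\frac{s}{\alpha\sigma})p_1>1$. This is the only place that hypothesis enters; the pointwise bound does not use it, contrary to your ``main obstacle'' remark. The second factor needs $\frac{s}{\sigma}p_2>1$, which is implicit in the statement.

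A minor point on Step 1: Strichartz's two-sided equivalence $\|\mathcal D_s f\|_{L^p}\sim\||\nabla|^s f\|_{L^p}$ requires $\frac{2d}{d+2s}<p<\infty$, not all $1<p<\infty$. The one-sided bound you actually use does hold for all $1<p<\infty$, since the Littlewood--Paley square function of $|\nabla|^sf$ is pointwise dominated by $\mathcal D_sf$. So this does not affect the argument.
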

	
	\begin{lem}[Derivatives of differences, \cite{2011-Killip-Visan}]\label{2: Lemma-H^s,0<s<1}
		Set $F(z)=|z|^pz$, with $p >0$. Let $s\in (0,1)$ and $ 1<r_1, r_2, r_3 < \infty$ satisfy $\frac{1}{r_1} = \frac{1}{r_2} + \frac{1}{r_3}$. Then we have 
		\begin{equation*}
			\||\nabla|^s(F(u)-F(v))\|_{L^{r_1}} \lesssim \|u\|_{L^{pr_2}}^p \||\nabla|^s (u-v)\|_{L^{r_3}}
			+ \||\nabla|^sv\|_{L^{r_3}}\|u-v\|^p_{L^{pr_2}}.
		\end{equation*}
	\end{lem}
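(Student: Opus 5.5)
The plan is to pass from $|\nabla|^s$ to a difference-quotient square function, estimate the second differences of $F$ pointwise, and then apply Hölder's inequality. We use the characterization, valid for $0<s<1$ and $1<r<\infty$,
\begin{equation*}
	\big\||\nabla|^s f\big\|_{L^r}\sim \big\|\mathcal D_s f\big\|_{L^r},\qquad
	\mathcal D_s f(x):=\Big(\int_0^\infty\Big(\frac{1}{|B_t|}\int_{|h|<t}|f(x+h)-f(x)|\,\dd h\Big)^2\frac{\dd t}{t^{1+2s}}\Big)^{\frac12}.
\end{equation*}
This is the Christ--Weinstein/Strichartz square function. We only need the direction $\lesssim$ for $G:=F(u)-F(v)$ and the direction $\gtrsim$ for $w:=u-v$ and $v$.

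\textbf{Step 1: pointwise bound on the second differences.} Fix $x$ and $h$, and set $b=v(x)$, $b'=v(x+h)$ and $w_0=w(x)$. Since $u=v+w$, we can split
\begin{equation*}
	G(x+h)-G(x)=\big[F(b'+w_0)-F(b')-F(b+w_0)+F(b)\big]+\big[F(u(x+h))-F(b'+w_0)\big].
\end{equation*}
\emph{The second bracket.} Since $u(x+h)-(b'+w_0)=w(x+h)-w(x)$, the mean value theorem gives
\begin{equation*}
	\lesssim |w(x+h)-w(x)|\big(|u(x+h)|+|u(x)|+|w(x)|\big)^p .
\end{equation*}
Here we used $|b'+w_0|\le |u(x+h)|+|w(x+h)-w(x)|$ and absorbed the extra term. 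This needs some care; one can always rewrite $|v|\le|u|+|w|$.
\emph{The first bracket.} It is a second difference of $F$ with fixed step $w_0$. For $p\le1$, the $p$-Hölder continuity of $F'$ bounds it by $|b'-b|\,|w_0|^p$. For $p>1$ it is bounded by $|b'-b|\,|w_0|(|b|+|b'|+|w_0|)^{p-1}$, and we again split $|v|\lesssim |u|+|w|$. The resulting mixed terms are reduced to the two model terms by Young's inequality $|u|^{p-1}|w|\lesssim |u|^p+|w|^p$ together with a symmetric version of the split in which the roles of $x$ and $x+h$ are exchanged.

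\textbf{Step 2: averaging in $h$ and Hölder.} Next we average over $|h|<t$. Factors evaluated at $x+h$, such as $|u(x+h)|^p$, are pulled out by Cauchy--Schwarz and the Hardy--Littlewood maximal function, giving $\big(M(|u|^{pq})\big)^{1/q}$ for some $q$ slightly above $1$. This yields the pointwise bound
\begin{equation*}
	\mathcal D_s G(x)\lesssim \big(M|u|^{pq}+M|w|^{pq}\big)^{\frac1q}(x)\,\widetilde{\mathcal D}_s w(x)+ \big(M|w|^{pq}\big)^{\frac1q}(x)\,\widetilde{\mathcal D}_s v(x).
\end{equation*}
Here $\widetilde{\mathcal D}_s$ is the $L^2$-averaged variant of $\mathcal D_s$, which is bounded on $L^r$ by the same characterization since $r_3>1$. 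We then take $L^{r_1}$ norms with Hölder's inequality in $\frac1{r_1}=\frac1{r_2}+\frac1{r_3}$. The maximal function is bounded on $L^{r_2/q}$ because $r_2>1$ and $q$ is close to $1$, which gives
\begin{equation*}
	\||\nabla|^s G\|_{L^{r_1}}\lesssim \big(\|u\|^p_{L^{pr_2}}+\|w\|^p_{L^{pr_2}}\big)\||\nabla|^s w\|_{L^{r_3}}+\|w\|^p_{L^{pr_2}}\||\nabla|^s v\|_{L^{r_3}}.
\end{equation*}

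\textbf{Step 3: removing the extra term.} The term $\|w\|^p_{L^{pr_2}}\||\nabla|^s w\|_{L^{r_3}}$ is not of the stated form. We remove it using $|\nabla|^s w=|\nabla|^s u-|\nabla|^s v$ and the symmetric estimate obtained by exchanging $u$ and $v$ and taking whichever of the two inequalities is favorable. Alternatively, we carry out Step 1 with the roles arranged so that $|w|^p$ only ever multiplies a difference of $v$. In that arrangement, the first bracket is controlled by $|\Delta_h v|\,|w|^p$ and the second bracket by $|\Delta_h w|\,\max(|u(x)|,|u(x+h)|)^p$, after rewriting $F(u(x+h))-F(u(x+h)-\Delta_h w)$ via the mean value theorem between two points both controlled by $u$ up to the $|w|$ factor.

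I expect this bookkeeping to be the main obstacle: arranging the algebraic splitting of the second difference of $F$ so that $|u|^p$ multiplies only $\Delta_h w$ and $|w|^p$ multiplies only $\Delta_h v$, uniformly in $p>0$. This is especially delicate for $p<1$, where $F'$ is merely Hölder continuous. The harmonic-analysis part, meaning the square-function characterization and the maximal-function bounds, is standard.
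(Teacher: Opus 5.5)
\textbf{There is a genuine gap: for $p>1$ the inequality as stated is false, so no splitting in Step 1 can work there.} Your own first-bracket bound shows the obstruction. For $p>1$ the second difference $F(b'+w_0)-F(b')-F(b+w_0)+F(b)$ is genuinely of size $|b'-b|\,|w_0|\,|b|^{p-1}$ when $|w_0|\ll|b|$, since it is approximately $F''(b)\,w_0\,(b'-b)$. This is much larger than $|b'-b|\,|w_0|^p$. Young's inequality turns it into $|\Delta_h v|\,(|v|^p+|w|^p)$, and the piece $|\Delta_h v|\,|v|^p$ is not one of your two model terms. Exchanging $x$ and $x+h$ does not remove it.

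Here is a counterexample. Let $\phi\in C_c^\infty(\mathbb R^3)$ be nonnegative and nonzero, and let $\chi\in C_c^\infty(\mathbb R^3)$ satisfy $\chi\equiv1$ on $\mathrm{supp}\,\phi$. Put $v=\chi(x)(2+\cos Nx_1)$ and $u=v+\epsilon\phi$. Then $u-v=\epsilon\phi$, so the right-hand side is $\lesssim\epsilon+\epsilon^pN^s$. On the other hand, $F(u)-F(v)$ is supported in $\mathrm{supp}\,\phi$, where $u,v\in[1,4]$ and $F$ is smooth. Hence $F(u)-F(v)=(p+1)\epsilon(2+\cos Nx_1)^p\phi+\epsilon^2R_N$ with $\||\nabla|^sR_N\|_{L^{r_1}}\lesssim N^s$. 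Meanwhile $\||\nabla|^s[(2+\cos Nx_1)^p\phi]\|_{L^{r_1}}\gtrsim N^s$, because $(2+\cos y)^p$ has a nonzero first Fourier coefficient. So for small $\epsilon$ the ratio of the left side to the right side is $\gtrsim N^s/(1+\epsilon^{p-1}N^s)$. This tends to $\epsilon^{1-p}$ as $N\to\infty$, which is unbounded as $\epsilon\to0$ when $p>1$; for $p\le1$ the example is harmless.

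The paper gives no proof and only cites Killip--Visan. An estimate with $\|u-v\|^p$ is a $0<p\le1$ statement, resting on the $p$-H\"older continuity of $F_z,F_{\bar z}$. For $p>1$ the correct estimate has $\|u-v\|_{L^{pr_2}}$ to the first power, multiplied by $(\|u\|_{L^{pr_2}}+\|v\|_{L^{pr_2}})^{p-1}(\||\nabla|^su\|_{L^{r_3}}+\||\nabla|^sv\|_{L^{r_3}})$. That is exactly the structure your first-bracket bound produces.

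For $0<p\le1$ your route (ball-mean square function, H\"older continuity of $F'$, maximal function) is the standard one, but two steps need repair.

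\emph{Step 3.} Only your second arrangement works. Writing $|\nabla|^sw=|\nabla|^su-|\nabla|^sv$ and symmetrizing does not eliminate $\|w\|^p\||\nabla|^sw\|$. In the second arrangement the first bracket is $\lesssim|\Delta_hv|\,|w(x)|^p$. The second bracket is $\lesssim|\Delta_hw|\,(|u(x)|+|u(x+h)|)^p+|\Delta_hw|^{1+p}$, using $|b'+w_0|\le|u(x+h)|+|\Delta_hw|$. Your bound by $|u(x+h)|+|u(x)|+|w(x)|$ is false: take $u=0$ at both points and $w(x)=0$. The term $|\Delta_hw|^{1+p}$ must be absorbed by a case split. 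If $|\Delta_hw|\le2(|u(x)|+|u(x+h)|)$, it is of the first type. Otherwise $|\Delta_hv|=|\Delta_hu-\Delta_hw|\ge\frac12|\Delta_hw|$ and $|\Delta_hw|\le|w(x)|+|w(x+h)|$, which gives $|\Delta_hv|\,(|w(x)|+|w(x+h)|)^p$.

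\emph{Step 2.} The weights evaluated at $x+h$ are handled inconsistently.
\begin{itemize}
\item Cauchy--Schwarz would need $M$ bounded on $L^{r_2/2}$, i.e.\ $r_2>2$.
\item The $L^2$ ball-mean square function fails to be bounded by $\||\nabla|^sf\|_{L^{r_3}}$ for $r_3<\frac{6}{3+2s}$, so ``bounded since $r_3>1$'' is wrong.
\item With a H\"older split $(q,q')$, $q<r_2$ close to $1$, you need an $L^{q'}$ ball-mean square function bound at exponent $r_3$. Such bounds require $q'$ below the Sobolev exponent $\frac{3r_3}{3-sr_3}$ when $sr_3<3$, as a local singularity $|x|^{-\beta}$ shows.
\end{itemize}
These two constraints are compatible because $r_1>1$. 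Still, this must be checked and the precise ball-means characterization cited; it is not ``the same characterization''.
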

	
	\begin{lem}[Strichartz estimate, \cite{KT98AJM,KTV14book}]\label{lem:strichartz}
		Let $a\in I\subset \R$. Suppose that $(q,r)$ and $(\wt{q},\wt{r})$ are admissible.
		Then,
		\begin{equation}\label{eq:strichartz-1}
			\norm{ S(t)\ph}_{L_t^{\infty}L_x^2 \cap L_t^qL_x^r(I\times\R^d)} \lsm \norm{\ph}_{L_x^2(\R^d)},
		\end{equation}
		and
		\begin{equation}\label{eq:strichartz-2}
			\left\| \int_a^t S(t-s) F(s)\ds \right\|_{L_t^{\infty}L_x^2\cap L_t^qL_x^r(I\times \R^d)} \lsm \norm{F}_{L_t^{\wt{q}'} L_x^{\wt{r}'}(I\times\R^d)}.
		\end{equation}
		The above estimates also hold when $L_t^q$ and $L_t^{\wt{q}'}$ are replaced by Lorentz space $L_t^{q,2}$ and $L_t^{\wt{q}', 2}$ with $q, \wt{q} \neq \infty$.
	\end{lem}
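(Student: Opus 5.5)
The plan follows the Keel--Tao abstract framework. It combines the dispersive decay read off from the explicit kernel \eqref{linear-explicit} with the unitarity of $S(t)$ on $L^2_x$, applies a $TT^*$ argument, and uses Hardy--Littlewood--Sobolev in time. First, \eqref{linear-explicit} gives
\begin{equation*}
\norm{S(t)S(s)^*\ph}_{L_x^\infty}\lsm |t-s|^{-\frac d2}\norm{\ph}_{L_x^1},
\end{equation*}
and Plancherel gives $\norm{S(t)\ph}_{L_x^2}=\norm{\ph}_{L_x^2}$. Riesz--Thorin interpolation between these yields
\begin{equation*}
\norm{S(t)S(s)^*F}_{L_x^r}\lsm |t-s|^{-d(\frac12-\frac1r)}\norm{F}_{L_x^{r'}},
\qquad 2\le r\le\I .
\end{equation*}

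\emph{Non-endpoint homogeneous estimate.} For admissible $(q,r)$ with $q>2$, I would prove \eqref{eq:strichartz-1} by duality. It is equivalent to boundedness of $T^*F=\int S(s)^*F(s)\ds$ from $L_t^{q'}L_x^{r'}$ to $L^2_x$, which in turn is equivalent to boundedness of $TT^*F(t)=\int S(t)S(s)^*F(s)\ds$ from $L_t^{q'}L_x^{r'}$ to $L_t^qL_x^r$. The admissibility relation gives exactly $d(\frac12-\frac1r)=\frac2q$. Hence the time kernel is $|t-s|^{-2/q}$, and the one-dimensional Hardy--Littlewood--Sobolev inequality closes the estimate since $0<\frac2q<1$. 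The $L_t^\I L_x^2$ part is immediate from unitarity.

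For the Lorentz refinement, I would use O'Neil's version of HLS on Lorentz spaces. Convolution with $|t|^{-2/q}\in L^{q/2,\I}$ maps $L_t^{q',2}$ to $L_t^{q,2}$. Running the same $TT^*$ argument in these spaces gives the homogeneous bound with $L^{q,2}_t$. This strengthens the $L^q_t$ statement, since $L^{q,2}\subset L^{q}$ for $q\ge2$.

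\emph{Inhomogeneous estimate.} For \eqref{eq:strichartz-2} with both pairs non-endpoint, I would write $\int_a^t S(t-s)F(s)\ds$ as a truncated version of $TT^*$, with kernel $S(t)S(s)^*\mathbbm 1_{a<s<t}$. When $q=\I$ or $\wt q=\I$, the bound follows from the homogeneous estimate and its dual. When $\wt q'<q$, the Christ--Kiselev lemma transfers the untruncated bound to the truncated one. The untruncated bound itself follows by composing $T$ and $\wt T^*$, which handles mixed pairs $(q,r)\neq(\wt q,\wt r)$.

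\emph{Endpoint, the main obstacle.} The endpoint $q=2$, $r=\frac{2d}{d-2}$ for $d\ge3$ (in particular $d=3$, $r=6$) is where I expect the real difficulty. There HLS fails because the kernel $|t-s|^{-1}$ is not integrable, and Christ--Kiselev fails because $\wt q'=q=2$. I would follow Keel--Tao. First, decompose the bilinear form $\iint\jb{S(s)^*F(s),S(t)^*G(t)}\ds\,\dd t$ dyadically in $|t-s|\sim2^j$. Second, prove for each piece a two-parameter family of bounds with geometric decay $2^{-j\beta(a,b)}$ by interpolating between the dispersive and energy bounds away from the endpoint. Third, sum over $j$ using bilinear real interpolation, which converts the exponents into $\ell^1$-type summability. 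The truncation $s<t$ is handled directly in this bilinear form, since each dyadic piece can be restricted to $s<t$. The same bilinear real interpolation naturally produces $L^{2}_tL^{r,2}_x$-type and Lorentz-in-time bounds. That yields the stated Lorentz-space versions with $q,\wt q\neq\I$ after interchanging the roles through duality.
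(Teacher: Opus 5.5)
Your treatment of the $L^q_t$ statements follows the argument of the cited Keel--Tao paper: dispersive bound, $TT^*$ and Hardy--Littlewood--Sobolev off the endpoint, the $(\infty,2)$ cases by duality and Minkowski, Christ--Kiselev for the retarded estimate when $\wt q'<q$, and Keel--Tao's dyadic bilinear interpolation at the endpoint. The paper gives no proof of its own and simply relies on that citation. Your O'Neil step is also fine. It gives the homogeneous $L_t^{q,2}$ bound, and hence by duality the $L_t^\infty L_x^2$ part of \eqref{eq:strichartz-2} for data in $L_t^{\wt q',2}L_x^{\wt r'}$. It even gives the retarded bound when $(q,r)=(\wt q,\wt r)$: there $\|\int_a^t S(t-s)F(s)\,ds\|_{L_x^r}\lesssim\int|t-s|^{-2/q}\|F(s)\|_{L_x^{r'}}\,ds$ pointwise in $t$, so the truncation costs nothing.

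The gap is the retarded estimate $L_t^{\wt q',2}L_x^{\wt r'}\to L_t^{q,2}L_x^{r}$ for two \emph{different} pairs. The paper actually uses this, for instance $L_t^{4/3,2}L_x^{3/2}\to L_t^{8,2}L_x^{12/5}$ in \eqref{sec5-1}, and your final sentence does not produce it.
\begin{itemize}
\item The Keel--Tao bilinear real interpolation varies the spatial exponents $(a,b)$ with $L_t^2$ held fixed. It therefore yields $L_t^2L_x^{r,2}$ and nothing in time; at $q=2$ one has $L_t^{2,2}=L_t^2$ anyway.
\item Duality only swaps the two pairs; it does not create a Lorentz-in-time bound.
\item The Christ--Kiselev argument does not transfer. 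Its convergence comes from the exact identities $\|\sum_k g_k\|_{L^q}^q=\sum_k\|g_k\|_{L^q}^q$ for disjointly supported outputs and $\|F\|_{L^{\wt q'}}^{\wt q'}=\sum_k\|F\mathbf{1}_{I_k}\|_{L^{\wt q'}}^{\wt q'}$ for the splitting. These give the decaying factor $2^{n(1/q-1/\wt q')}$ at level $n$. In $L_t^{q,2}$ with $q>2$, disjoint pieces are only controlled in $\ell^2$. The $L_t^{\wt q',2}$ norm with $\wt q'<2$ only dominates the $\ell^2$ sum of its pieces. So the argument gives only $O(\|F\|)$ per dyadic level, with no decay, and the sum over levels diverges.
\end{itemize}

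One way to close the gap is to keep $r,\wt r$ fixed and move $(q,\wt q)$ slightly in both directions along the scaling line $\frac1q+\frac1{\wt q}=\frac d2\bigl(1-\frac1r-\frac1{\wt r}\bigr)$. The new pairs $(q_i,\wt q_i)$, $i=0,1$, are no longer admissible. They are still acceptable, however, and satisfy $\frac1{q_i}+\frac1{\wt q_i}<1$ away from the double endpoint; for $d\ge3$ the ratio condition on $r/\wt r$ is automatic for admissible $r,\wt r$. Hence \eqref{esti:inho-Strichartz-Lp} gives the retarded bounds $L_t^{\wt q_i'}L_x^{\wt r'}\to L_t^{q_i}L_x^r$ for $i=0,1$. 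Real interpolation with $(L_t^{p_0}X,L_t^{p_1}X)_{\theta,2}=L_t^{p_\theta,2}X$, for a fixed Banach space $X$, then gives the required bound $L_t^{\wt q',2}L_x^{\wt r'}\to L_t^{q,2}L_x^{r}$.
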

	
	\begin{lem}[Inhomogeneous Strichartz estimates, \cite{Mas15CPAA,F05JHDE}]\label{lem-inhomogeneous Strichartz estimates}
		Let $d\geqslant 3$ and $I\subset \mathbb{R}$. Suppose that $(q,r)$ and $(\tilde{q}, \tilde{r})$ are acceptable. Assume further that
		\begin{equation*}
			\frac{2}{q} + \frac{d}{r} +\frac{2}{\wt{q}} + \frac{d}{\wt{r}} =d;\quad \frac{1}{q} + \frac{1}{\wt{q}}<1 ;\quad \frac{d-2}{d} \leq \frac{r}{\wt{r}} \leq \frac{d}{d-2}.
		\end{equation*}
		Then, for $a\in I$, we have
		\begin{equation}\label{esti:inho-Strichartz-Lp}
			\left\|\int_{a}^{t}S(t-s)f(s)\mathrm{d}s\right\|_{L_t^{q}L_x^r(I\times \mathbb{R}^d)}\lesssim \|f\|_{L_t^{\tilde{q}^{\prime}} L_x^{\tilde{r}^{\prime}}(I\times \mathbb{R}^d)},
		\end{equation}
		and 
		\begin{equation}\label{esti:inho-Strichartz-Lpq}
			\left\|\int_{a}^{t}S(t-\tau)F(\tau)\mathrm{d} \tau \right\|_{L_t^{q, 2}L_x^{r}(I\times \mathbb{R}^d)} \lesssim \|F \|_{L_t^{\tilde{q}^{\prime}, 2}L_x^{\tilde{r}^{\prime}}(I\times \mathbb{R}^d)}.
		\end{equation}
	\end{lem}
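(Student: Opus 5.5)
This lemma is quoted from \cite{F05JHDE,Mas15CPAA}. To prove it I would follow Foschi's duality and Whitney-decomposition scheme, and then get the Lorentz refinement \eqref{esti:inho-Strichartz-Lpq} from the bilinear real interpolation used by Keel--Tao \cite{KT98AJM}.

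\textbf{Reduction to a bilinear form.} By duality, \eqref{esti:inho-Strichartz-Lp} is equivalent to
\begin{equation*}
	|B(F,G)|\lsm \|F\|_{L_t^{\wt q'}L_x^{\wt r'}}\|G\|_{L_t^{q'}L_x^{r'}},
	\qquad
	B(F,G):=\iint_{s<t}\jb{S(t-s)F(s),G(t)}\,\dd s\,\dd t ,
\end{equation*}
with $s,t\in I$ and $s\ge a$; truncating the time region reduces to $I=\R$, $a=-\I$. I would decompose the region $\{s<t\}$ into pieces $Q_j$ on which $|t-s|\sim 2^j$, for $j\in\Z$; after a further Whitney decomposition each $Q_j$ is a union of products $I_k\times I_k'$ of intervals of length $\sim2^j$ at distance $\sim 2^j$. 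The aim is to bound each piece by
\begin{equation*}
	|B_j(F,G)|\lsm 2^{j\beta(a,b)}\|F\|_{L_t^{\wt q'}L_x^{b'}}\|G\|_{L_t^{q'}L_x^{a'}},
\end{equation*}
for $(1/a,1/b)$ in an open neighbourhood of the target exponents. Here $\beta$ is the exponent forced by scaling, and the scaling hypothesis $\frac2q+\frac dr+\frac2{\wt q}+\frac d{\wt r}=d$ makes $\beta=0$ exactly at $(a,b)=(r,\wt r)$.

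\textbf{Local estimates on a single piece.} I would combine two bounds.
\begin{enumerate}
	\item The dispersive estimate $\|S(t-s)\|_{L^{b'}\to L^{a}}\lsm|t-s|^{-d(\frac12-\frac1a)}$, applied with $a=b$, together with H\"older in time on intervals of length $2^j$.
	\item The factorization $S(t-s)=S(t)S(-s)$, which splits $B_j$ on $I_k\times I_k'$ into the product of two homogeneous Strichartz norms on the separate intervals, controlled by \eqref{eq:strichartz-1} and \eqref{eq:strichartz-2}. Here H\"older in time lets the non-admissible exponents be compared with admissible ones, and the acceptability conditions $\frac1q<\frac d2-\frac dr$ guarantee that such admissible pairs exist.
\end{enumerate}
Complex interpolation between (1) and (2) yields the bound for $B_j$ over a two-dimensional range of $(1/a,1/b)$. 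The condition $\frac{d-2}{d}\le \frac r{\wt r}\le\frac d{d-2}$ is precisely the requirement that $(r,\wt r)$ lie in this convex range; this is why $d\ge3$ and this ratio restriction appear.

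\textbf{Summation in $j$.} Because $\beta$ changes sign across the target point, the estimate of Keel--Tao \cite{KT98AJM} (Bourgain's bilinear interpolation trick) gives $\sum_j|B_j|$ bounded in real interpolation spaces. This produces \eqref{esti:inho-Strichartz-Lp} with Lorentz norms in $x$, which are then upgraded to Lebesgue norms by Lorentz embeddings. In Masaki's version one instead interpolates in the time variable. The off-diagonal dyadic exponents in time then give $L_t^{\wt q',2}\to L_t^{q,2}$, which is \eqref{esti:inho-Strichartz-Lpq}. The hypothesis $\frac1q+\frac1{\wt q}<1$ is what makes the time-scaling exponent satisfy the O'Neil/Hardy--Littlewood--Sobolev-type condition needed at this stage.

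\textbf{Expected main obstacle.} The hard step is the local estimate on a single piece near the boundary of the acceptable region, where neither estimate (1) nor estimate (2) alone suffices. There I would first try Foschi's approach of using the Strichartz estimates directly on $I_k\times I_k'$ with exponents just inside the boundary, and interpolating; I expect the endpoint cases $r/\wt r=\frac d{d-2}$ to need the most care.
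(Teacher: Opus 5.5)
The paper gives no proof of this lemma; it quotes Foschi's non-sharp case ($\frac1q+\frac1{\wt q}<1$) and Masaki's Lorentz-in-time refinement, so the comparison is with that argument. Your ingredients are the right ones: the Whitney decomposition, local bounds from the dispersive estimate and from $TT^*$, and the convex-hull reading of $\frac{d-2}{d}\le\frac{r}{\wt r}\le\frac{d}{d-2}$. The gap is in your main summation step, which does not prove \eqref{esti:inho-Strichartz-Lp} under the stated hypotheses. That step is bilinear real interpolation in the spatial exponents $(1/a,1/b)$ at fixed $(q,\wt q)$, followed by a Lorentz embedding in $x$. It is Foschi's mechanism for the \emph{sharp} case, which is why his sharp case needs the extra conditions $\frac1r\le\frac1q$, $\frac1{\wt r}\le\frac1{\wt q}$ and strict ratio inequalities. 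Two things break.

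\emph{Ratio endpoints.} The Keel--Tao/Bergh--L\"ofstr\"om lemma needs local bounds at three points forming a nondegenerate triangle with $(1/r,1/\wt r)$ in its interior. Every bound you can get by interpolating your estimates (1) and (2) lies in the closed sector $\frac{d-2}{d}\le\frac{r}{\wt r}\le\frac{d}{d-2}$. So the endpoint ratios, which the lemma includes, are out of reach.

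\emph{Lorentz embedding.} The resulting spaces are $(L_t^{\wt q'}L_x^{b_0'},L_t^{\wt q'}L_x^{b_1'})_{\theta_1,p_0}$ and $(L_t^{q'}L_x^{a_0'},L_t^{q'}L_x^{a_1'})_{\theta_0,p_1}$ with $\frac1{p_0}+\frac1{p_1}\ge1$. They contain $L_t^{\wt q'}L_x^{\wt r'}$ and $L_t^{q'}L_x^{r'}$ only if $p_0\ge\max(\wt q',\wt r')$ and $p_1\ge\max(q',r')$. You can see this by testing with tensor products, and with $\sum_k 1_{[k,k+1)}(t)h_k(x)$ for widely separated $L^{r'}$-normalized dilates $h_k$. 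This forces $\max(\frac1q,\frac1r)+\max(\frac1{\wt q},\frac1{\wt r})\le1$, which the hypotheses do not imply. For $d=3$ and $(\frac1q,\frac1r,\frac1{\wt q},\frac1{\wt r})=(\frac{17}{20},\frac{9}{50},\frac1{20},\frac{11}{50})$ every assumption of the lemma holds, yet that sum is $\frac{107}{100}$. The embedding $L^{r'}\subset L^{r',2}$ that rescues Keel--Tao relies on $q=\wt q=2$ and does not carry over.

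\emph{The fix.} In the non-sharp case you should fix $(r,\wt r)$ and interpolate in the \emph{time} exponents. What you mention only as ``Masaki's version'' for \eqref{esti:inho-Strichartz-Lpq} is in fact the proof of both estimates.
\begin{enumerate}
\item At fixed $(r,\wt r)$, the set of $(1/q,1/\wt q)$ for which the local bound on a unit Whitney square holds is upward closed, by H\"older on a unit interval.
\item Scaling and acceptability put the target strictly inside this set, including at the endpoint ratios. For example, on the ray $\frac{d-2}{r}=\frac{d}{\wt r}$ the interpolated local region is $\{\frac1q\ge0,\ \frac1{\wt q}\ge\frac1r\}$, and acceptability of $(q,r)$ is exactly $\frac1{\wt q}>\frac1r$.
\item The strict inequality $\frac1q+\frac1{\wt q}<1$ is used twice. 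First, it lets you sum the squares of one scale by H\"older in the square index, which needs $\frac1{q'}+\frac1{\wt q'}\ge1$ at all nearby time exponents. Second, it lets you run the bilinear interpolation with secondary indices $(\wt q',q')$, which gives $L_t^{\wt q'}$ and $L_t^{q'}$ directly and hence \eqref{esti:inho-Strichartz-Lp}. Choosing secondary indices $(2,2)$ instead gives \eqref{esti:inho-Strichartz-Lpq}.
\end{enumerate}
No Hardy--Littlewood--Sobolev condition is involved, and the ratio endpoints cause no difficulty. Similarly, in your step (2) acceptability does not by itself allow H\"older comparison with an admissible pair at $b=\wt r$; that needs $\frac1{\wt q}\ge\frac d2(\frac12-\frac1b)$, which can fail. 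Its real role is the interior condition above. The degenerate pairs $(\infty,2)$, where the target lies on the boundary of the local region, reduce to the classical admissible Strichartz estimates.
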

	
	\begin{lem}[Radial Strichartz estimates, \cite{GW14JAM}]\label{RSE:radial strichartz estimate}
		Let $d\geq 3$, $\gamma \in \mathbb{R}$, $I\subset\mathbb{R}$, and let $u_0(x)$ be a radial function. Suppose that $(q,r)$ satisfy $2\leq q,r \leq \infty$, $(q,r)\neq (2,\frac{4d-2}{2d-3})$, $\frac{2}{q}+\frac{2d-1}{r} \leq \frac{2d-1}{2}$ and
		\begin{equation*}
			\frac{2}{q} + \frac{d}{r} = \frac{d}{2} - \gamma.
		\end{equation*}
		Then, we have
		\begin{equation}\label{eq: radial estimate}
			\|S(t)u_0\|_{L_t^{\infty}\dot{H}_x^{\gamma}\cap L_t^{q}L_x^{r}(I\times \mathbb{R}^d)} \lesssim \|u_0\|_{\dot{H}^{\gamma}(\mathbb{R}^d)}.
		\end{equation} 
	\end{lem}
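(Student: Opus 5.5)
This lemma is quoted from \cite{GW14JAM}. To justify it within our framework, I would reduce it to a frequency-localized estimate for radial data and then apply the abstract Keel--Tao machinery with an improved decay rate.

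\emph{Reduction to unit frequency.} Let $P_N$ be the Littlewood--Paley projections. Since $q,r\ge2$ (and $r<\infty$ away from trivial cases), the square function estimate and Minkowski's inequality give
\[
\|S(t)u_0\|_{L_t^qL_x^r}\lsm \Big(\sum_N \|S(t)P_Nu_0\|_{L_t^qL_x^r}^2\Big)^{1/2}.
\]
Parabolic scaling $x\mapsto Nx$, $t\mapsto N^2t$ maps $P_N$ to $P_1$ and produces the factor $N^{\frac d2-\frac2q-\frac dr}=N^{\gamma}$. This is exactly the scaling relation in the hypothesis. It therefore suffices to prove
\[
\|S(t)P_1f\|_{L_t^qL_x^r}\lsm \|f\|_{L^2}
\]
for radial $f$. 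The $L_t^\infty\dot H^\gamma$ part of the estimate is trivial by unitarity.

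\emph{Improved radial dispersion.} For radial $f$, the kernel of $S(t)P_1$ can be written through the Bessel function $J_{\frac{d-2}{2}}$. Combining stationary phase in the radial variable with the Bessel decay $|J_\nu(r)|\lsm r^{-1/2}$, I would aim for an effective radial decay bound, in a Lorentz/weighted sense, of the form
\[
\|S(t)P_1 f\|_{L^{r}_x}\lsm \jb t^{-\sigma(1-\frac2r)}\|f\|_{L^{r'}_x},\qquad \sigma=\frac{2d-1}{2}.
\]
The heuristic behind this rate is the following. A radial wave at unit frequency lives on a shell of radius $\sim|t|$ and thickness $O(1)$, so its amplitude decays like $|t|^{-(d-1)/2}$ from the spherical spreading, plus an extra one-dimensional $|t|^{-1/2}$ in the radial direction. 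That gives $|t|^{-d/2}\cdot|t|^{-(d-1)/2}$ relative to $L^1\to L^\infty$, with the gain compensated when measured in $L^r$. In practice I would not prove this as a clean $L^1\to L^\infty$ bound. Instead I would prove a $TT^*$ bilinear bound directly: estimate
\[
\Big|\iint \big\langle S(t-s)P_1^2F(s),G(t)\big\rangle\,ds\,dt\Big|
\]
by splitting into the regions $|x|\lsm |t|$ and $|x|\gg|t|$. In the region $|x|\gg|t|$, integration by parts gives rapid decay. In the region $|x|\sim|t|$, the radial kernel bound $|K_t(|x|,|y|)|\lsm |t|^{-1/2}(|x||y|)^{-\frac{d-1}{2}}$ applies.

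\emph{Keel--Tao and non-sharp cases.} Once the decay of order $\sigma=\frac{2d-1}{2}$ is available on radial unit-frequency functions, the Keel--Tao argument (bilinear interpolation, with the Hardy--Littlewood--Sobolev inequality in time) yields the sharp line
\[
\frac2q+\frac{2d-1}{r}=\frac{2d-1}{2},
\]
excluding the endpoint $(2,\frac{4d-2}{2d-3})$. The strict inequality $\frac2q+\frac{2d-1}{r}<\frac{2d-1}{2}$ follows from the sharp line by Bernstein's inequality at unit frequency, which lets $r$ increase at fixed $q$. Rescaling and summing then give \eqref{eq: radial estimate}, including on any subinterval $I$.

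The main obstacle is the radial frequency-localized decay step. The Bessel asymptotics must be handled uniformly near the origin $|x|\lsm1$, where the extra decay fails, and this local region has to be absorbed using the ordinary $|t|^{-d/2}$ dispersion.
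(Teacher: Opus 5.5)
The paper gives no proof of this lemma; it quotes it from \cite{GW14JAM}. Judged on its own, your sketch has two genuine gaps.

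\textbf{The cases $q=2$ and $r=\infty$ are not reached.}

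\emph{$q=2$.} The route ``sharp line via Keel--Tao, then Bernstein to raise $r$ at fixed $q$'' gives nothing at $q=2$. The only point of the sharp line with $q=2$ is the excluded endpoint $(2,\frac{4d-2}{2d-3})$.

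\emph{$r=\infty$.} The Littlewood--Paley square-function step is unavailable here. Summing the dyadic bounds by the triangle inequality controls only $\dot B^{\gamma}_{2,1}$, not $\dot H^{\gamma}$.

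So the one radial estimate the paper actually uses, $(q,r,\gamma)=(2,\infty,\frac12)$ in $d=3$ (see \eqref{eq:linear-estimate-v-radial-final}), is not proved. Nor is $r=\infty$ a ``trivial case'':
\begin{itemize}
\item For $2<q<\infty$ it needs its own argument, e.g.\ $TT^*$ applied directly to $|\nabla|^{-\gamma}S(t)$, using $\||\nabla|^{-2\gamma}S(t)\|_{L^1\to L^\infty}\lsm|t|^{-2/q}$ and Hardy--Littlewood--Sobolev in time.
\item For $q=2$ the unit-frequency bound already holds without radiality in $d\ge3$. The radial input must therefore enter in the summation over frequencies, which your sketch does not address.
\item At $(q,r)=(\infty,\infty)$ the asserted bound is false, since it would give $\dot H^{d/2}\hookrightarrow L^\infty$ for radial functions. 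Take $f_K=\sum_{k\le K}k^{-1}g_k$ with $\widehat{g_k}(\xi)=2^{-kd}\psi(2^{-k}\xi)$, where $\psi\ge0$ is a radial annular bump. Then $\|f_K\|_{\dot H^{d/2}}\lsm1$ while $f_K(0)\sim\log K$. That corner has to be removed from the statement.
\end{itemize}

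\textbf{The core decay step fails as proposed.}

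Your displayed bound $\|S(t)P_1f\|_{L^r}\lsm\jb{t}^{-\frac{2d-1}{2}(1-\frac2r)}\|f\|_{L^{r'}}$ is false for every $r>2$, even for radial $f$. A fixed radial unit-frequency bump at the origin gives $\|S(t)P_1f\|_{L^r}\sim|t|^{-d(\frac12-\frac1r)}$. So there is no decay of order $\frac{2d-1}{2}$ to feed into Keel--Tao, and your repair does not produce one, for two reasons:
\begin{itemize}
\item Splitting by $|x|\lsm|t-s|$ versus $|x|\gg|t-s|$ is the wrong split. There is no rapid decay for $|x|\gg|t-s|$ when $|y|\approx|x|$.
\item The interaction of $y$ near the origin with $|x|\sim|t-s|$ has kernel of size exactly $|t-s|^{-d/2}$. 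Bounding it by absolute values and Young or Hardy--Littlewood--Sobolev in time gives only the usual non-radial range $\frac2q\le d(\frac12-\frac1r)$, not the radial one.
\end{itemize}

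A workable route is to localize the operator $T=S(t)P_1$ on radial $L^2$ to dyadic shells before forming $TT^*$, i.e.\ $T_j=\chi_{\{|x|\sim2^j\}}T$.
\begin{itemize}
\item Both kernel variables of $T_jT_j^*$ then sit at scale $2^j$, so the kernel is negligible for $|t-s|\gg2^j$.
\item For $1\le|t-s|\lsm2^j$ the Bessel asymptotics give $|K|\lsm|t-s|^{-\frac12}2^{-j(d-1)}\lsm|t-s|^{-\frac{2d-1}{2}}$.
\item Interpolating with $L^2$ and applying Young in time gives $\|T_j\|\lsm2^{-cj}$ with $c>0$ whenever $\frac2q<(2d-1)(\frac12-\frac1r)$. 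This covers $q=2$ (the time kernel is integrable there) and $r=\infty$, at unit frequency.
\item On $|x|\lsm1$ the ordinary $\jb{t}^{-d/2}$ decay does suffice, because both variables are localized and $d\ge3$.
\end{itemize}
On the sharp line itself the $T_j$ are only uniformly bounded. A further summation device is still needed there, such as Bourgain's interpolation trick or a Keel--Tao-type bilinear interpolation over the double decomposition in $j$ and $|t-s|$. Your sketch does not supply this.
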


	\vspace{2cm}
	
	\section{Local results}\label{sec:local result}
	
	\vspace{0.5cm}

	In this section, we prove Theorem \ref{thm:final-local}. By the pseudo-conformal transform $\mathcal{U}=\mathcal{T}u$, the equation \eqref{eq:nls} with final data \eqref{eq:final-local} is equivalent to \eqref{eq:nls-initial} with initial data $\U(0,x)=\mathcal{U}_0(x)= \F^{-1} \wb u_+\in \dot{H}_x^s(\R^3)$.
	\subsection{Local well-posedness}\label{section-lwp}
	By the pseudo-conformal transform, Theorem \ref{thm:final-local} $(1)$ is a consequence of the following proposition.
	\begin{prop}\label{prop:local result}
		Let $\U_0\in \dot H_x^s(\R^3)$ and \(0<s<1\). 
		\begin{enumerate}
			\item 
			Let $\frac{4}{3+2s}<p\leq \frac{4}{3-2s}$. Then there exists $T_0=T_0(\norm{\U_0}_{\dot H_x^s(\R^3)})>0$ such that the equation \eqref{eq:nls-initial} admits a unique solution $\U\in C([0, T_0];\dot H_x^s(\R^3))$ with $\U(0,x)=\U_0(x)$.
			\item 
			Let $p=\frac{4}{3+2s}$. Then there exists $T_0=T_0(\U_0)>0$ such that the equation \eqref{eq:nls-initial} admits a unique solution $\U\in C([0, T_0];\dot H_x^s(\R^3))$ with $\U(0,x)=\U_0(x)$.
		\end{enumerate}
		Moreover, in both cases, the corresponding solution map $\U_0
		\longmapsto \U$, locally defined from $\dot H_x^s(\R^3)$ to $C([0,T_0];\dot H_x^s(\mathbb R^3))$, is Lipschitz continuous when \(p\ge1\) and Hölder continuous of order \(p\) when \(0<p<1\).
	\end{prop}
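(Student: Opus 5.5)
We set up a contraction mapping for the Duhamel formulation of \eqref{eq:nls-initial},
\begin{equation*}
	\Phi(\U)(t)=S(t)\U_0-i\int_0^t S(t-\tau)\,\tau^{\frac{3p}{2}-2}|\U|^p\U(\tau)\,\dd\tau ,
\end{equation*}
on $[0,T_0]$, in a space built from $L_t^\infty\dot H_x^s$ together with a Strichartz norm $\||\nabla|^s\U\|_{L_t^qL_x^r}$ for an admissible pair $(q,r)$. The singular time weight $\tau^{\frac{3p}{2}-2}$ is the new feature. Since $p>\frac{4}{3+2s}>\frac{4}{5}$, we have $\frac{3p}{2}-2>-\frac45$, so the weight is integrable near $0$ in a suitable Lebesgue or Lorentz sense. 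The plan is to place it in $L_t^{\theta,\infty}$ with $\theta$ close to $(2-\frac{3p}{2})^{-1}$, and to use Hölder in Lorentz spaces together with the Lorentz-space Strichartz estimates of Lemma \ref{lem:strichartz} (or Lemma \ref{lem-inhomogeneous Strichartz estimates}).

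For the nonlinear estimate we apply $|\nabla|^s$ to $|\U|^p\U$ via the fractional chain rule: Lemma \ref{lem:Frac_chain-alphaorder} when $p<1$, or its Leibniz-type analogue from Lemma \ref{2: Lemma-H^s,0<s<1} with $v=0$. This bounds it by $\|\U\|_{L_x^{pr_2}}^p\||\nabla|^s\U\|_{L_x^{r_3}}$. The undifferentiated factor $\|\U\|_{L^{pr_2}}$ is controlled by Sobolev embedding $\dot W^{s,r}\subset L^{r^*}$, $\frac1{r^*}=\frac1r-\frac s3$. We then count exponents: the dual pair $(\tilde q',\tilde r')$ must satisfy Hölder in $x$ (the $p+1$ factors plus duality) and Hölder in $t$ (the weight in $L^{\theta,\infty}$ plus $p+1$ Strichartz norms). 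Scaling shows that the leftover power of $T_0$ is
\begin{equation*}
	T_0^{\frac{3p}{2}-2+1-\frac{p}{2}(3-2s)+\dots}=T_0^{\frac{p}{4}(3+2s)-1}\cdot(\text{const}).
\end{equation*}
This exponent is positive exactly when $p>\frac{4}{3+2s}$, giving a small factor and hence case (1) with $T_0$ depending only on $\|\U_0\|_{\dot H^s}$. The upper constraint $p\le\frac{4}{3-2s}$ (i.e.\ $s\ge s_c$ on the Sobolev side) is what makes the Sobolev exponent $pr_2$ reachable from $\dot H^s$-level Strichartz norms. We choose $r$ accordingly, and at the endpoint $p=\frac{4}{3-2s}$ we use the Lorentz refinement.

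In the critical case $p=\frac{4}{3+2s}$ the power of $T_0$ vanishes and the weight is exactly $\tau^{-\frac{2}{2+... }}\in L^{\theta,\infty}$ with no room. We then argue as for critical NLS: the smallness comes from $\||\nabla|^sS(t)\U_0\|_{L_t^{q,2}L_x^r([0,T_0])}\to0$ as $T_0\to0$, by dominated convergence. This makes $T_0$ depend on the profile of $\U_0$ rather than on its norm. We work throughout in $L_t^{q,2}$ norms, so that Hölder with $L^{\theta,\infty}$ closes via Lemma \ref{lem:strichartz} in its Lorentz form.

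Contraction in the same norm follows from Lemma \ref{2: Lemma-H^s,0<s<1} applied to the difference, which gives Lipschitz dependence for $p\ge1$. For $p<1$ the difference estimate yields only $\|u-v\|^p$ factors, hence Hölder continuity of order $p$. Uniqueness is in the Strichartz class; unconditional uniqueness in $C\dot H^s$ is not claimed beyond this. Continuity in time in $\dot H^s$ comes from the $L^\infty_t\dot H^s$ Strichartz bound.

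The main obstacle is choosing consistent exponents $(q,r),(\tilde q,\tilde r)$ for all $0<s<1$ and the full range of $p$ with $p<1$. The chain rule requires $\sigma>s/p$ and integrability conditions, so for small $p$ we may need to differentiate at a higher level $\sigma$, or to use the inhomogeneous Strichartz estimates of Lemma \ref{lem-inhomogeneous Strichartz estimates} for non-admissible pairs. I would first try the pair that makes every factor sit at the same Lebesgue exponent, and fall back on the acceptable non-admissible pairs if the admissibility constraints fail.
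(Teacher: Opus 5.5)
Your treatment of $p\ge1$ and of the endpoint $p=\frac4{3+2s}$ (Lorentz norms, smallness of the free evolution on short intervals) matches the paper. The argument has a genuine gap in the range $\frac{4}{3+2s}\le p<1$, which is nonempty whenever $s>\frac12$.

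\textbf{The gap.} For $p<1$, Lemma~\ref{2: Lemma-H^s,0<s<1} applied to $F(\U)-F(\V)$ only gives, in your $\dot H^s$-level Strichartz metric $d$ on a ball of radius $2R$,
\begin{equation*}
d(\Phi(\U),\Phi(\V))\lesssim T_0^{\frac{(3+2s)p-4}{4}}\bigl(R^p\,d(\U,\V)+R\,d(\U,\V)^p\bigr).
\end{equation*}
The second term is not Lipschitz, since $d^p\gg d$ as $d\to0$. So $\Phi$ is not a contraction in this metric, and neither existence via the fixed point theorem nor uniqueness follows.

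Your H\"older claim fails for the same reason. For $X=\||\nabla|^s(\U_1-\U_2)\|$ you only obtain $X\le C\eta+\frac12X+CX^p$, and every sufficiently small $X$ satisfies this, even when $\eta=0$.

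The real obstacle is this failure of the Lipschitz difference estimate: $F(z)=|z|^pz$ is not $C^2$, and $F_z,F_{\bar z}$ are only H\"older of order $p$. It is not the exponent bookkeeping you point to. The a priori bound on $|\nabla|^sF(\U)$ needs only the $C^1$ chain rule; Lemma~\ref{lem:Frac_chain-alphaorder} is needed only for differences.

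\textbf{The missing idea.} Keep the ball defined by the strong norm $\||\nabla|^s\U\|_{L_t^{q_0}L_x^{\tilde r_0}}\le 2R$, but contract in the weaker metric $\||\nabla|^\theta(\U-\V)\|_{L_t^{q_0}L_x^{r_0}}$ with $s-\frac{(3-2s)p}{4}<\theta<sp$, as in Definition~\ref{defn:lwp-parameters}. The ball is complete for this metric.

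Write $F(\U)-F(\V)=(\U-\V)\int_0^1F_z\,\dd\rho+\overline{(\U-\V)}\int_0^1F_{\bar z}\,\dd\rho$. Then use Kato--Ponce together with Lemma~\ref{lem:Frac_chain-alphaorder} for the H\"older-$p$ functions $F_z,F_{\bar z}$; this is exactly where $\theta<sp$ is needed. Only $\theta$ derivatives then fall on $\U-\V$, so the estimate is linear in the difference (Lemma~\ref{lem: Nonlinear estimate}).

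Since $\frac2{q_0}+\frac3{r_0}=\frac32-(s-\theta)$, these pairs are not admissible. You therefore need the inhomogeneous Strichartz estimates for acceptable pairs, Lemma~\ref{lem-inhomogeneous Strichartz estimates}, in its Lorentz form when $p=\frac4{3+2s}$.

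\textbf{H\"older stability.} It then follows in two steps.
\begin{enumerate}
\item Obtain the Lipschitz bound $\||\nabla|^\theta(\U_1-\U_2)\|_{L_t^{q_0}L_x^{r_0}}\lesssim\eta$ in the weak metric.
\item Apply Lemma~\ref{2: Lemma-H^s,0<s<1} at level $s$. Absorb the term containing $\||\nabla|^s(\U_1-\U_2)\|$ by smallness, and control $\|\U_1-\U_2\|_{L_x^{pr_2}}$ by the weak metric via Sobolev embedding. This gives $\eta+\eta^p\lesssim\eta^p$.
\end{enumerate}
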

	
	The case \(p\geq 1\) can be handled by the standard contraction mapping argument. We therefore turn to the range $\frac{4}{3+2s}\leq p<1$, which can occur only when \(s>\frac12\). In this regime, the nonlinearity \(F(z)=|z|^p z\) fails to be \(C^2\), which brings extra difficulty. We introduce the following auxiliary spacetime exponents.
	
	\begin{defn}\label{defn:lwp-parameters}
		Let $\frac{4}{3+2s}\leq p<1$ and $I\subset \R$.
		\begin{enumerate}
			\item We denote $\theta=\frac{1}{2}sp+\frac{1}{2}(s-\frac{(3-2s)p}{4})$, and $(q_0, r_0)$, $(\wt{q}, \wt{r})$ are given by
			\begin{equation}\label{eq:parameters}
				\begin{aligned}
					\frac{1}{q_0}&=\frac{(3-2s)p-4(s-\theta)}{4(p+2)}, \quad \frac{1}{r_0}=\frac{3+\theta p}{3(p+2)}, \\
					\frac{1}{\wt{q}}&=\frac{(3+2s-4\theta)p+4(s-\theta)}{4(p+2)}, \quad \frac{1}{\wt{r}}=\frac{3+\theta p}{3(p+2)}.
				\end{aligned}
			\end{equation}
			\item We denote
			\begin{align}\label{defn:x1y1}
				\|f\|_{X_1(I)}:=\||\nabla|^{\theta}f\|_{L_t^{q_0}L_x^{r_0}(I\times\mathbb{R}^3)},\quad \|f\|_{Y_1(I)}:=\||\nabla|^{\theta}f\|_{L_t^{\wt{q}^{\prime}}L_x^{\wt{r}^{\prime}}(I\times\mathbb{R}^3)};\\
				\|f\|_{X_2(I)}:=\||\nabla|^{\theta}f\|_{L_t^{q_0, 2}L_x^{r_0} (I\times\mathbb{R}^3)},\quad \|f\|_{Y_2(I)}:=\||\nabla|^{\theta}f\|_{L_t^{\wt{q}^{\prime},2}L_x^{\wt{r}^{\prime}}(I\times\mathbb{R}^3)}.
			\end{align}
		\end{enumerate}
		
	\end{defn}
	\begin{remark}
		Next, we show that the pairs $(q_0, r_0)$ and $(\wt{q}, \wt{r})$ defined above are acceptable and satisfy $s-\frac{(3-2s)p}{4}<\theta<sp$, $2< q_0, \wt{q} < \infty$, and
		\begin{equation}\label{eq:lwp-parameters-scaling}
			\frac{2}{q_0}+\frac{3}{r_0}=\frac{3}{2}-(s-\theta),\quad \frac{2}{\wt{q}}+\frac{3}{\wt{r}}=\frac{3}{2}+(s-\theta).
		\end{equation}
		First, \eqref{eq:lwp-parameters-scaling} follows from a direct calculation. We observe that $s-\frac{(3-2s)p}{4}< sp$ is equivalent to $\frac{4s}{3+2s}<p$. Then, by the choice of $\th$, we have that $s-\frac{(3-2s)p}{4}<\theta<sp$. It is easy to see that $q_0, \wt{q}>0$. Recalling  \eqref{eq:lwp-parameters-scaling}, we obtain
		\begin{equation*}
			\frac{1}{q_0} + \frac{3}{r_0} = \frac{3}{2} - (s-\th) -\frac{1}{q_0} <\frac 32.
		\end{equation*}
		Thus, $(q_0, r_0)$ is acceptable.
		Furthermore, by $(3-2s)p<4$ and $\th<sp<s$, we have that $2<q_0<\I$. Similarly, $(\wt{q}, \wt{r})$ is acceptable. Note that $\frac{(3-2s)p}{4}\leq \frac{1}{2}$. Then, we have that $s-2\theta<s-\theta<\frac{1}{2}$, which implies $2< \wt{q}<\infty$. The choice of $\theta, \ (q_0, r_0),$ and $(\wt{q}, \wt{r})$ is not unique.
	\end{remark}
	
	We derive some estimates that will help us control the nonlinearity.
	\begin{lem}[Nonlinear estimates]\label{lem: Nonlinear estimate}
		Let $I:=[0,T]$ and $F(z):=|z|^pz$. Assume that $\theta$, $(q_0, r_0)$, $(\wt{q}, \wt{r})$, $X_1$, $Y_1$, $X_2$ and $Y_2$ are given in Definition \ref{defn:lwp-parameters}. Let $\wt{r}_0\in (2, 6)$ such that $\frac{2}{q_0}+\frac{3}{\wt{r}_0}=\frac{3}{2}$. Then for $\frac{4}{3+2s}< p<1$,
		\begin{equation}\label{Nonlinear estimate -II}
			\begin{aligned}
				&\left\|\int_{0}^{t}S(t-\tau)(\tau^{\frac{3p}{2}-2} (F(\U) - F(\V)) ) \mathrm{d} \tau \right\|_{X_1(I)} \\
				&\lesssim T^{\frac{(3+2s)p-4}{4}} \big( \||\nabla|^s \U\|^p_{L_t^{q_0}L_x^{\wt{r}_0}(I\times\R^3)} + \||\nabla|^s\V \|^p_{L_t^{q_0}L_x^{\wt{r}_0}(I\times\R^3)}\big)\|\U-\V\|_{X_1(I)}.
			\end{aligned}
		\end{equation}
		and for $p=\frac{4}{3+2s}$,
		\begin{equation}\label{Nonlinear estimate -III}
			\begin{aligned}
				&\left\|\int_{0}^{t}S(t-\tau)(\tau^{\frac{3p}{2}-2} (F(\U) - F(\V)) ) \mathrm{d} \tau \right\|_{X_2(I)} \\
				&\lesssim \big( \||\nabla|^s \U\|^p_{L_t^{q_0,2}L_x^{\wt{r}_0}(I\times\R^3)} + \||\nabla|^s\V \|^p_{L_t^{q_0,2}L_x^{\wt{r}_0}(I\times\R^3)}\big)\|\U-\V\|_{X_2(I)}.
			\end{aligned}
		\end{equation}
	\end{lem}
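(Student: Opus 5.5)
The plan is to run the Duhamel term through the inhomogeneous Strichartz estimate of Lemma~\ref{lem-inhomogeneous Strichartz estimates}. That reduces both \eqref{Nonlinear estimate -II} and \eqref{Nonlinear estimate -III} to a pointwise-in-time fractional product estimate for $F(\U)-F(\V)$. The time weight $\tau^{\frac{3p}{2}-2}$ is then absorbed by H\"older's inequality in time.

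\emph{Step 1: reduction to a dual-norm bound.} Since $|\nabla|^\theta$ commutes with $S(t-\tau)$, the left-hand side equals the $L_t^{q_0}L_x^{r_0}$ norm of the Duhamel integral of $|\nabla|^\theta\bigl(\tau^{\frac{3p}{2}-2}(F(\U)-F(\V))\bigr)$. By the remark after Definition~\ref{defn:lwp-parameters}, both $(q_0,r_0)$ and $(\wt q,\wt r)$ are acceptable, and by \eqref{eq:lwp-parameters-scaling}
\begin{equation*}
\frac2{q_0}+\frac3{r_0}+\frac2{\wt q}+\frac3{\wt r}=3 .
\end{equation*}
Moreover $r_0=\wt r$, so the ratio condition holds, and $q_0,\wt q>2$ gives $\frac1{q_0}+\frac1{\wt q}<1$. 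Hence \eqref{esti:inho-Strichartz-Lp}, or \eqref{esti:inho-Strichartz-Lpq} in the Lorentz case, bounds the left-hand side by
\begin{equation*}
\bigl\|\tau^{\frac{3p}{2}-2}\,|\nabla|^\theta(F(\U)-F(\V))\bigr\|_{L_t^{\wt q'}L_x^{\wt r'}}
\end{equation*}
(respectively with $L_t^{\wt q',2}$).

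\emph{Step 2: spatial estimate, keeping linearity in $\U-\V$.} Lemma~\ref{2: Lemma-H^s,0<s<1} cannot be used directly: it produces $\|\U-\V\|^p$, which is not linear in $\U-\V$. Instead I write
\begin{equation*}
F(\U)-F(\V)=(\U-\V)\,G_1+\overline{(\U-\V)}\,G_2,\qquad G_j=\int_0^1 \pd_{z}F\ \text{or}\ \pd_{\bar z}F\,\bigl(\V+\lambda(\U-\V)\bigr)\,\dd\lambda .
\end{equation*}
Here $G_j$ is a H\"older function of order $p$ of $\U,\V$. Kato--Ponce (Lemma~\ref{lem:kato-ponce-inequality}) then gives two terms.

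The first term is $\||\nabla|^\theta(\U-\V)\|_{L^{r_0}}\,\|G_j\|_{L^{a}}$. Here $\|G_j\|_{L^a}\lesssim \|\U\|_{L^{pa}}^p+\|\V\|_{L^{pa}}^p$, and $L^{pa}$ is controlled by $\||\nabla|^s\cdot\|_{L^{\wt r_0}}$ via Sobolev embedding.

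The second term is $\|\U-\V\|_{L^{b}}\,\||\nabla|^\theta G_j\|_{L^{c}}$. I control $\|\U-\V\|_{L^b}$ by $\||\nabla|^\theta(\U-\V)\|_{L^{r_0}}$ via Sobolev. For $\||\nabla|^\theta G_j\|_{L^c}$ I use the fractional chain rule (Lemma~\ref{lem:Frac_chain-alphaorder}) with $\alpha=p$ and $\sigma=s$; this is admissible because $\theta<sp$ gives $\frac\theta p<s<1$. It yields
\begin{equation*}
\bigl\||w|^{p-\frac\theta s}\bigr\|\,\bigl\||\nabla|^s w\bigr\|^{\frac\theta s},
\end{equation*}
again bounded by $\||\nabla|^s w\|_{L^{\wt r_0}}^p$ after Sobolev, where $w$ ranges over $\V+\lambda(\U-\V)$.

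The admissible Lebesgue exponents are fixed by scaling: the $\dot H^s$-level quantities sit in $L^{\wt r_0}$ with $\frac2{q_0}+\frac3{\wt r_0}=\frac32$, while $\U-\V$ sits at regularity $\theta$ in $L^{r_0}$.

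\emph{Step 3: time integration.} After Step 2 the integrand in time is $\tau^{\frac{3p}2-2}\,A(\tau)^p\,B(\tau)$, with $A\in L_t^{q_0}$ and $B\in L_t^{q_0}$ (or $L^{q_0,2}$). By H\"older, $\frac1{\wt q'}=\frac{p+1}{q_0}+\frac1\gamma$, and a short computation from \eqref{eq:parameters} should give $\frac1\gamma-(2-\frac{3p}{2})=\frac{(3+2s)p-4}{4}$. In the subcritical case $p>\frac4{3+2s}$ this exponent is positive, so $\tau^{\frac{3p}2-2}\in L^\gamma([0,T])$ with norm $\sim T^{\frac{(3+2s)p-4}{4}}$, which is the claimed factor in \eqref{Nonlinear estimate -II}. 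In the endpoint $p=\frac4{3+2s}$ the weight lies only in $L^{\gamma,\infty}$. O'Neil's H\"older inequality for Lorentz spaces then combines $L^{\gamma,\infty}$ with the $L^{q_0,2}$ factors into $L^{\wt q',2}$. This is exactly why $X_2$, $Y_2$ and the Lorentz version \eqref{esti:inho-Strichartz-Lpq} are used.

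The main obstacle is Step 2. One must choose the intermediate exponents so that three requirements hold simultaneously: all Sobolev embeddings are valid (the regularity deficits $s-\theta$ and $\theta$ must match $\frac3{\wt r_0}-\frac3{pa}$, etc.), the chain-rule side condition $(1-\frac{\theta}{ps})p_1>1$ holds, and the H\"older exponents sum to $\frac1{\wt r'}$. My first attempt will be to take $\sigma=s$ and fix $p_1$ by scaling. If the side condition fails, I will pick $\sigma\in(\frac\theta p,s)$ and interpolate between $\||\nabla|^s w\|_{L^{\wt r_0}}$ and the Sobolev-embedded $L^{p\cdot}$ norm. The constraint $s-\frac{(3-2s)p}{4}<\theta<sp$ in the definition of $\theta$ is presumably engineered so that this works.
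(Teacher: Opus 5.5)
Your proposal is correct and follows the paper's proof step by step: the inhomogeneous Strichartz estimate reduces the claim to the $Y_1$ (resp.\ $Y_2$) norm, you split $F(\U)-F(\V)=(\U-\V)\int_0^1F_z\,\dd\rho+\overline{(\U-\V)}\int_0^1F_{\bar z}\,\dd\rho$, and you apply Kato--Ponce and the fractional chain rule with $\alpha=p$, $\sigma=s$. As in the paper, Sobolev then lands everything in the common space $L^{r_1}$, $\frac1{r_1}=\frac1{r_0}-\frac\theta3=\frac1{\wt r_0}-\frac s3$, and H\"older in time uses $\tau^{\frac{3p}2-2}\in L^{q_1}$ with $\frac1{q_1}=1-\frac{(3-2s)p}{4}$, or $L^{q_1,\infty}$ with Lorentz H\"older at the endpoint. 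Your first choice $\sigma=s$ already works without the fallback interpolation: with $p_1=r_3$, $\frac1{r_3}=(p-\frac\theta s)\frac1{r_1}$, the side condition $(1-\frac{\theta}{ps})r_3>1$ reduces to $r_1>p$, which holds since $r_1>1>p$.
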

	\begin{proof}
		By \eqref{esti:inho-Strichartz-Lp},
		\begin{equation}\label{esti:X1-Y1}
			\left\|\int_{0}^{t}S(t-\tau)f(\tau, \cdot) \mathrm{d} \tau \right\|_{X_1(I)}\lesssim \|f\|_{Y_1(I)}.
		\end{equation}
		In the following, we restrict \((t,x) \) to $I\times\mathbb{R}^3$. We denote 
		\begin{equation*}
			\frac{1}{q_1}=\frac{4-(3-2s)p}{4}; \quad \frac{1}{r_1}=\frac{1}{r_0}-\frac{\theta}{3}
		\end{equation*}
		Note that
		\begin{equation*}
			F(z)-F(w)=(z-w)\int_{0}^{1}F_z(w+\rho(z-w))\mathrm{d}\rho+(\bar{z}-\bar{w})\int_{0}^{1}F_{\bar{z}}(w+\rho(z-w))\mathrm{d}\rho,
		\end{equation*}
		where $F_z=\frac{p+2}{2}|z|^p, F_{\bar{z}}=\frac{p}{2}|z|^{p-2}z^2$, which are both H\"older continuous of order $p$. Therefore, by \eqref{esti:X1-Y1}, the claim \eqref{Nonlinear estimate -II} will hold once we show that
		\begin{equation}\label{Lemma: Nonlinear estimate II 3}
			\begin{aligned}
				&\big\|t^{\frac{3p}{2}-2}|\nabla|^{\theta}(F_z(u+v)w)\big\|_{L_t^{\wt{q}^{\prime}}L_x^{\wt{r}^{\prime}}}\\
				&\lesssim \|t^{\frac{3p}{2}-2}\|_{L_t^{q_1}(I)} \big(\||\nabla|^s u\|^p_{L_t^{q_0}L_x^{\wt{r}_0}} + \||\nabla|^s v\|^p_{L_t^{q_0}L_x^{\wt{r}_0}}\big)\||\nabla|^{\theta}w\|_{L_t^{q_0}L_x^{r_0}}.
			\end{aligned}
		\end{equation}
		
		Now, it suffices to prove \eqref{Lemma: Nonlinear estimate II 3}. Take $q_2$, $r_2$ and $r_3$ such that
		\begin{equation*}
			\frac{1}{q_2}=\frac{p}{q_0}; \quad \frac{1}{r_2}=\frac{p}{r_1}+\frac{\theta}{3}; \quad \frac{1}{r_3}=(p-\frac{\theta}{s})\frac{1}{r_1}.
		\end{equation*}
		By Lemma \ref{lem:kato-ponce-inequality},
		\begin{align*}
			\big\|t^{\frac{3p}{2}-2}|\nabla|^{\theta}(F_z(u+v)w)\big\|_{L_t^{\tilde{q}^{\prime}}L_x^{\tilde{r}^{\prime}}}&\lesssim T^{\frac{(3+2s)p-4}{4}} \||\nabla|^{\theta}F_z(u+v)\|_{L_t^{q_2}L_x^{r_2}}\|w\|_{L_t^{q_0}L_x^{r_1}}\\
			& \qquad +T^{\frac{(3+2s)p-4}{4}} \|u+v\|^p_{L_t^{q_0}L_x^{r_1}}\||\nabla|^{\theta}w\|_{L_t^{q_0}L_x^{r_0}}.
		\end{align*}
		Thus, by Lemma \ref{lem:Frac_chain-alphaorder}, H\"older's and Sobolev's inequalities,
		\begin{align*}
			\||\nabla|^{\theta}F_z(u+v)\|_{L_t^{q_2}L_x^{r_2}}
			&\lesssim \big\|\||u+v|^{p-\frac{\theta}{s}} \|_{L_x^{r_3}} \||\nabla|^{s}(u+v) \|^{\frac{\theta}{s}}_{L_x^{\wt{r}_0}} \big\|_{L_t^{q_2}}\\
			&\lesssim \|u+v\|^{p-\frac{\theta}{s}}_{L_t^{q_0}L_x^{r_1}} \| |\nabla|^{s}(u+v) \|^{\frac{\theta}{s}}_{L_t^{q_0}L_x^{\wt{r}_0}} \\
			&\lesssim \||\nabla|^s u\|^p_{L_t^{q_0}L_x^{\tilde{r}_0}} + \||\nabla|^s v\|^p_{L_t^{q_0}L_x^{\tilde{r}_0}}.
		\end{align*}
		This finishes the proof of \eqref{Lemma: Nonlinear estimate II 3} and hence \eqref{Nonlinear estimate -II}.
		
		The same argument, with \eqref{esti:inho-Strichartz-Lpq} in place of \eqref{esti:inho-Strichartz-Lp}, proves \eqref{Nonlinear estimate -III}.
	\end{proof}
	
	We are now in a position to prove Proposition \ref{prop:local result}.
	\begin{proof}[Proof of Proposition \ref{prop:local result}]
		Let $I:=[0, T_0]$. Define the map
		\begin{equation*}
			\Phi(\U):=S(t)\U_0-i\int_{0}^{t}S(t-\tau)(\tau^{\frac{3p}{2}-2}|\U|^p\U)\mathrm{d}\tau.
		\end{equation*}
		If $p\geq 1$, we set $\frac{1}{q_0}= \frac{(3-2s)p}{4(p+2)}$. If $p<1$, $\frac{1}{q_0}$ is defined in \eqref{eq:parameters}. Let $\wt{r}_0\in (2,6)$ such that $\frac{2}{q_0}+\frac{3}{\wt{r}_0}=\frac{3}{2}$. 
		
		We first consider $\frac{4}{3+2s}<p\leq \frac{4}{3-2s}$. We denote the resolution space
		\begin{equation*}
			E_1(R):=\{ \U\in L_t^{q_0}\dot{W}_x^{s,\wt{r}_0}(I\times \mathbb{R}^3) :\||\nabla|^s \U\|_{L_t^{q_0}L_x^{\wt{r}_0}(I\times \mathbb{R}^3)}\leqslant 2R \}.
		\end{equation*}
		When $1\leq p \leq \frac{4}{3-2s}$, we consider the metric
		\begin{equation*}
			d_1(\U, \V)=\||\nabla|^s (\U-\V)\|_{L_t^{q_0}L_x^{\wt{r}_0}(I\times \mathbb{R}^3)}.
		\end{equation*}
		When $\frac{4}{3+2s}< p <1$, we consider the metric
		\begin{equation*}
			d_2(\U,\V)=\|\U-\V\|_{X_1(I)}=\||\nabla|^{\theta} (\U-\V)\|_{L_t^{q_0}L_x^{r_0}(I\times \mathbb{R}^3)}.
		\end{equation*}
		Note that $E_1(R)$ is a complete space with \(d_1\) in the first case and \(d_2\) in the second. Let $R:=C\|\U_0\|_{\dot{H}_x^s(\R^3)}$. We restrict the variable $(t,x)\in I\times \R^3$. By the Strichartz estimate, for any admissible pair $(q,r)$,
		\begin{equation*}
			\begin{aligned}
				\||\nabla|^s \Phi(\U)\|_{L_t^{q}L_x^r\cap L_t^{q_0}L_x^{\wt{r}_0}}&\leq C\| \U_0\|_{\dot{H}_x^s(\R^3)} +C\|t^{\frac{3p}{2}-2}\|_{L_t^{\frac{4}{4-(3-2s)p}}(I)} \||\nabla|^s \U\|^{p+1}_{L_t^{q_0}L_x^{\wt{r}_0}}\\
				&\leq R+CT_0^{\frac{(3+2s)p-4}{4}} R^{p+1}.
			\end{aligned}
		\end{equation*}
		Similarly, by Lemma \ref{2: Lemma-H^s,0<s<1},
		\begin{equation*}
			\begin{aligned}
				d_1(\Phi(\U), \Phi(\V))&\leq CT_0^{\frac{(3+2s)p-4}{4}}(\||\nabla|^s \U\|^p_{L_t^{q_0}L_x^{\wt{r}_0}} + \||\nabla|^s \V\|^p_{L_t^{q_0}L_x^{\wt{r}_0}})d_1(\U, \V)\\
				&\leq CT_0^{\frac{(3+2s)p-4}{4}} R^pd_1(\U,\V).
			\end{aligned}
		\end{equation*}
		By Lemma \ref{lem: Nonlinear estimate},
		\begin{equation*}
			\begin{aligned}
				d_2(\Phi(\U),\Phi(\V))&\leq CT_0^{\frac{(3+2s)p-4}{4}}(\||\nabla|^s \U\|^p_{L_t^{q_0}L_x^{\wt{r}_0}} + \||\nabla|^s \V\|^p_{L_t^{q_0}L_x^{\wt{r}_0}}) d_2(\U,\V)\\
				&\leq CT_0^{\frac{(3+2s)p-4}{4}}R^p d_2(\U,\V).
			\end{aligned}
		\end{equation*}
		Choose $T_0 =T_0(\|\U_0\|_{\dot{H}_x^s(\R^3)})$ sufficiently small such that the map $\Phi(\U)$ is a contraction. Then we obtain a unique solution $\U$ satisfying $|\nabla|^s \U\in L_t^{q}L_x^{r}(I\times \mathbb{R}^3)$ for any admissible pair $(q,r)$. Therefore, $\U\in C(I;\dot{H}^s(\mathbb{R}^3))$.
		
		Next, we consider $p=\frac{4}{3+2s}$. We denote the resolution space
		\begin{equation*}
			E_2(\delta):=\{ \U\in L_t^{q_0,2}\dot{W}_x^{s,\wt{r}_0}(I\times \mathbb{R}^3) :\||\nabla|^s \U\|_{L_t^{q_0,2}L_x^{\wt{r}_0}(I\times \mathbb{R}^3)}\leqslant 2\delta \}.
		\end{equation*}
		When $p\geq 1$ we consider the metric
		\begin{equation*}
			d_3(\U,\V)=\||\nabla|^{s} (\U-\V)\|_{L_t^{q_0,2}L_x^{\wt{r}_0}(I\times \mathbb{R}^3)}.
		\end{equation*} 
		When $p< 1$ we consider the metric
		\begin{equation*}
			d_4(\U,\V)=\|\U-\V\|_{X_2(I)}=\||\nabla|^{\theta} (\U-\V)\|_{L_t^{q_0,2}L_x^{r_0}(I\times \mathbb{R}^3)}.
		\end{equation*} 
		By the Strichartz estimate,
		\begin{equation*}
			\begin{aligned}
				\||\nabla|^s \Phi(\U)\|_{ L_t^{q_0,2}L_x^{\wt{r}_0}}&\leq C\||\nabla|^s S(t)\U_0\|_{L_t^{q_0,2}L_x^{\wt{r}_0}} +\||\nabla|^s \U\|^{p+1}_{L_t^{q_0,2}L_x^{\wt{r}_0}}\\
				&\leq C\||\nabla|^s S(t)\U_0\|_{L_t^{q_0,2} L_x^{\wt{r}_0}} +C\delta^{p+1}.
			\end{aligned}
		\end{equation*}
		Similarly, for any admissible pair $(q,r)$, we have
		\begin{equation*}
			\||\nabla|^s \Phi(\U)\|_{ L_t^{q,2}L_x^{r}}\leq C\| \U_0\|_{\dot H_x^s(\R^3)} +C\||\nabla|^s \U\|^{p+1}_{L_t^{q_0,2}L_x^{\wt{r}_0}}.
		\end{equation*}
		By Lemmas \ref{2: Lemma-H^s,0<s<1} and \ref{lem: Nonlinear estimate},
		\begin{equation*}
			\begin{aligned}
				d_3(\Phi(\U),\Phi(\V))&\leq C\delta^p d_3(\U,\V),\\
				d_4(\Phi(\U),\Phi(\V))&\leq C\delta^p d_4(\U,\V).
			\end{aligned}
		\end{equation*}
		Choose $\delta$ and $T_0=T_0(\U_0)$ sufficiently small so that $C\||\nabla|^s S(t)\U_0\|_{L_t^{q_0,2} L_x^{\wt{r}_0}(I\times\R^3)}\leq \delta$. Then the map $\Phi(\U)$ is a contraction. Thus, we obtain a unique solution $\U\in C(I;\dot{H}^s(\mathbb{R}^3))$.
		
		It remains to prove the asserted stability estimates for the solution map. By the same smallness condition used in the contraction argument, the solution map is defined on a sufficiently small neighborhood of \(\U_0\) with a common existence time \(T_0\). Let \(\U_{0,1}\) and \(\U_{0,2}\) be two initial data in this neighborhood, and let \(\U_1\) and \(\U_2\) be the corresponding solutions in \(C([0,T_0];\dot H^s(\mathbb R^3))\). Set
		\begin{equation*}
			\eta:=\|\U_{0,1}- \U_{0,2}\|_{\dot H_x^s(\R^3)}\ll 1.
		\end{equation*}
		
		When \(p\ge1\), the difference estimate used in the contraction
		argument, together with the Strichartz estimate, gives
		\[
		\|\U_1-\U_2\|_{C([0,T_0];\dot H_x^s(\R^3))}
		\lesssim \eta.
		\]
		
		When \(0<p<1\), the linear estimate, the Duhamel formula, and
		Lemma \ref{lem: Nonlinear estimate} give
		\[
		d_j(\U_1,\U_2)\lesssim \eta,
		\]
		where \(j=2\) in the non-endpoint case and \(j=4\) in the endpoint
		case. Applying the Strichartz estimate and
		Lemma \ref{2: Lemma-H^s,0<s<1} at regularity \(s\), and absorbing the
		term containing the highest-order difference norm by the same
		smallness condition used in the contraction argument, we obtain
		\[
		\|\U_1-\U_2\|_{C([0,T_0];\dot H_x^s(\R^3))}
		\lesssim 
		\eta+d_j(\U_1,\U_2)^p
		\lesssim \eta^p.
		\]
		This proves the claimed stability estimates.
	\end{proof}

	\subsection{Ill-posedness}\label{section-illposed}

	Applying the pseudo-conformal transform, Theorem \ref{thm:final-local} (2) is equivalent to 
	\begin{prop}\label{prop-illposed-initial}
		Let the assumptions in Theorem \ref{thm:final-local} (2) hold. Then for any $0<\varepsilon, \delta<1$ and for any $t>0$ sufficiently small there exist solutions $\U_1$, $\U_2$ of \eqref{eq:nls-initial} with initial data $\U_1(0)$, $\U_2(0) \in \mathcal{S}(\mathbb{R}^3)$ such that
		\begin{align*}
			\|\U_1(0)\|_{\dot{H}_x^s(\R^3)}+\|\U_2(0)\|_{\dot{H}_x^s(\R^3)} &< C \varepsilon,\\
			\|\U_1(0)-\U_2(0)\|_{\dot{H}_x^s(\R^3)}&<C \delta,\\
			\|\U_1(t)-\U_2(t)\|_{\dot{H}_x^s(\R^3)}&> c \varepsilon,
		\end{align*}
		where $C,c>0$ are constants independent of $\varepsilon$ and $\delta$.
	\end{prop}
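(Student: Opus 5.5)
The plan is a small-dispersion (ODE-approximation) argument in the spirit of Christ--Colliander--Tao \cite{CCT03illposed}, adapted to the time-weighted equation \eqref{eq:nls-initial}. Write $a:=\frac{3p}{2}-2$, so that $a+1=\frac{3p}{2}-1>0$ because $p>\frac23$. Dropping the Laplacian, the ODE $i\pd_t W=t^{a}|W|^pW$ with $W(0)=\phi$ has the explicit solution
\begin{equation*}
W(t,x)=\phi(x)\exp\Big(-\frac{i\,t^{a+1}}{a+1}|\phi(x)|^p\Big).
\end{equation*}
Two data $\phi$ and $(1+\nu)\phi$ therefore decohere once $\nu\, t^{a+1}|\phi|^p\sim 1$. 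I will take $\phi$ a fixed Gaussian, so that $\phi>0$, $\phi^p$ is again a Gaussian, and $W$ is Schwartz, smooth in $x$ uniformly for bounded times.

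\emph{Concentration and scaling.} Set $\U_j(0)=(1+\nu_j)A\phi(\lambda x)$ with $\nu_1=0$ and $\nu_2=\nu\sim\delta$. Choose $A=\varepsilon\lambda^{\frac32-s}$, so that $\|\U_j(0)\|_{\dot H^s}\sim\varepsilon$ and $\|\U_1(0)-\U_2(0)\|_{\dot H^s}\lesssim\delta$. Rescale by $\U(t,x)=A\,V(\lambda^2 t,\lambda x)$. Then $V$ solves
\begin{equation*}
i\pd_\tau V+\tfrac12\De V=\kappa\,\tau^{a}|V|^pV,\qquad \kappa=A^p\lambda^{-2(a+1)}=\varepsilon^p\lambda^{p(\frac32-s)-3p+2}=\varepsilon^p\lambda^{2-p(\frac32+s)}.
\end{equation*}
The exponent $2-p(\frac32+s)$ is positive exactly when $p<\frac{4}{3+2s}$, i.e.\ $s<-s_c$. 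Hence $\kappa\to\infty$ as $\lambda\to\infty$. Next change time again, $\tau=\kappa^{-1/(a+1)}\sigma$. This gives
\begin{equation*}
i\pd_\sigma V+\tfrac{\mu}{2}\De V=\sigma^a|V|^pV,\qquad \mu:=\kappa^{-1/(a+1)}\to0 .
\end{equation*}
This is a semiclassical NLS with vanishing dispersion. The original time is $t=\lambda^{-2}\kappa^{-1/(a+1)}\sigma$. For a given small $t$ and $\sigma\sim\nu^{-1/(a+1)}$ (chosen so that the phase difference $\nu\sigma^{a+1}|\phi|^p/(a+1)$ is of order one, say $\pi$, on a fixed ball), this relation fixes $\lambda$. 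Moreover $\lambda\to\infty$ as $t\to0$, which is why the statement is only for $t$ sufficiently small. Since $\dot H^s$ norms are invariant under the combined rescaling up to the factor $A\lambda^{s-\frac32}=\varepsilon$, it suffices to show two things at this $\sigma$: $\|W_1-W_2\|_{\dot H^s}\gtrsim 1$, and $\|V_j-W_j\|_{\dot H^s}\to0$ as $\mu\to0$. The first is elementary. The difference is $\phi\,(e^{-i\Theta_1}-e^{-i\Theta_2})$ with $\Theta_j$ smooth and bounded in every $C^k$ for the fixed $\sigma$. It is of size $\sim1$ on a ball, so its $L^2$ norm is $\gtrsim1$ while its $H^1$ norm is $O(1)$, and interpolation then gives $\dot H^s$ norm $\gtrsim1$.

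\emph{Approximation step (main obstacle).} Write $V=W+w$. Then
\begin{equation*}
i\pd_\sigma w+\tfrac\mu2\De w=\sigma^a\big(F(W+w)-F(W)\big)-\tfrac\mu2\De W,
\end{equation*}
with forcing $O(\mu)$ in every $H^k$. An $L^2$ energy estimate with Gronwall (using $|F(W+w)-F(W)|\lesssim(|W|^p+|w|^p)|w|$ and $\sigma^a\in L^1_{loc}$ since $a>-1$) gives $\|w\|_{L^2}\lesssim\mu$, provided $\|w\|_{L^\infty}$ stays bounded. The difficulty is that for $p\le1$ the nonlinearity is only $C^{1,p}$ at $z=0$, and $W$ vanishes at spatial infinity, so high-order $H^k$ energy estimates for $w$ are unavailable. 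I will first try to propagate only an $H^1$ bound on $V$, via $|\nabla F(V)|\lesssim|V|^p|\nabla V|$. Combined with a bootstrap for $\|V\|_{L^\infty}$, I would use either a Sobolev bound through a fractional $H^{\frac32+}$ estimate using Lemma \ref{lem:Frac_chain-alphaorder}, or a persistence estimate in $W^{1,\infty}$ for the short rescaled time. This yields $\|V\|_{H^1}=O(1)$ on $[0,\sigma]$. Interpolating $\|w\|_{L^2}\lesssim\mu$ with $\|w\|_{H^1}=O(1)$ then gives $\|w\|_{\dot H^s}\lesssim\mu^{1-s}\to0$, since $s<1$. If the $L^\infty$ control proves delicate, the fallback is to use the Strichartz/Kato-type local theory at an $H^{s'}$ level with $\frac32<s'<\frac32+p$, where the $C^{1,p}$ regularity of $F$ suffices, on the bounded interval $[0,\sigma]$. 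Combining the two steps and undoing the scaling yields the three inequalities of Proposition \ref{prop-illposed-initial}, with $C,c$ depending only on $\phi$.
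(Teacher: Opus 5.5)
Your scaling and decoherence set-up is correct and uses the same small-dispersion (ODE-approximation) framework as the paper. Your data $\phi$ and $(1+\nu)\phi$ play the role of $aw$ and $a'w$ with $|a-a'|<\delta$, and your $\mu$ is the paper's $\mu^2$.

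\textbf{The approximation step.} The gap is here. You reduce the third inequality to $\|V_j-W_j\|_{\dot H^s}\to0$. That forces a bound on $\|V\|_{H^1}$ over $[0,\sigma]$ that does not degenerate as $\mu\to0$, and hence $L^\infty$ control of $V$. You only sketch alternatives for this and carry none out. A $W^{1,\infty}$ persistence argument has no standard basis, since $e^{\frac{i}{2}\mu\sigma\Delta}$ is not bounded on $L^\infty$. Strichartz-based local theory loses a factor $\mu^{-1/q}$ as $\mu\to0$. So only a pure energy bootstrap at some $H^{s'}$, $s'>\frac32$, with tame estimates for the merely $C^{1,p}$ nonlinearity could be uniform in $\mu$. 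As written, even your bound $\|w\|_{L^2}\lesssim\mu$ depends on this unproved $L^\infty$ bound.

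\textbf{The lower bound for $W_1-W_2$.} This is also argued incorrectly. The conditions $\|f\|_{L^2}\gtrsim1$ and $\|f\|_{H^1}=O(1)$ do not imply $\|f\|_{\dot H^s}\gtrsim1$: take $f=\eta^{3/2}g(\eta x)$ with $\eta\to0$. Moreover, $\|W_1(\sigma)-W_2(\sigma)\|_{H^1}\sim\nu^{-1}$ when $\sigma^{a+1}\sim\nu^{-1}$, so this norm is not bounded uniformly in $\delta$.

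\textbf{How the paper avoids both problems.} Two observations suffice.

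First, for $p\le1$ the $L^2$ estimate needs no control of $w$ at all. Pointwise, $|\im[(F(W+w)-F(W))\bar w]|\le C|W|^p|w|^2$:
on $\{|w|\le|W|\}$ one has $|F(W+w)-F(W)|\lesssim|W|^p|w|$;
on $\{|w|>|W|\}$ the imaginary part equals $\im[(|W+w|^p-|W|^p)W\bar w]$, and $\big||W+w|^p-|W|^p\big|\,|W|\le|w|^p|W|\le|W|^p|w|$.
Since $|W|=|\phi|$ is bounded, Gronwall gives $\sup_{[0,\sigma]}\|V_j-W_j\|_{L^2}\le C(\sigma)\mu$ unconditionally. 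This is exactly where the hypothesis $p\le1$ enters; in your plan it appears only as an obstruction.

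Second, you need only a lower bound, and it follows from $L^2$ closeness alone. For fixed $k>0$ one has $\|f\|_{\dot H^s}^2\ge k^{2s}\int_{|\xi|\ge k}|\hat f|^2\,d\xi$. The low frequencies are controlled by $\int_{|\xi|<k}|\widehat{W_1-W_2}|^2\,d\xi\le Ck^3\|W_1-W_2\|_{L^1}^2\le Ck^3\|\phi\|_{L^1}^2$, uniformly in $\nu$. Take $c_1\le\|W_1(\sigma)-W_2(\sigma)\|_{L^2}^2$ independent of $\delta$, and choose $k$ so that $Ck^3\|\phi\|_{L^1}^2\le c_1/2$. This gives $\|V_1(\sigma)-V_2(\sigma)\|_{\dot H^s}^2\ge k^{2s}(c_1/4-C(\sigma)\mu^2)$. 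Letting $\mu\to0$ (i.e.\ $t\to0$) finishes the argument without any Sobolev control of $V$ beyond $L^2$.
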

	
	In the following, we prove Proposition \ref{prop-illposed-initial}. Note that equation \eqref{eq:nls-initial} is invariant under the scaling transform:
	\begin{equation}\label{scale transform}
		\U(t,x) \rightarrow \lambda^{\frac{2}{p}-3}\U(\lambda^{-2}t, \lambda^{-1}x).
	\end{equation}
	Denote
	\begin{equation}\label{illposedness-phi transform}
		\U(t,x)=\phi(t,\mu x),
	\end{equation}
	where $\U(t,x)$ is the solution of \eqref{eq:nls-initial}. Then $\phi$ satisfies the equation
	\begin{align}\label{NLS: phi equation}
		\left\{
		\begin{aligned}
			&i\phi_t+\frac{\mu^2}{2}\Delta \phi=t^{\frac{3p}{2}-2}|\phi|^p \phi, \\
			&\phi(0,x)=\phi_0(x).
		\end{aligned}
		\right.
	\end{align}
	Consider the ODE
	\begin{align}\label{ODE-equation}
		\left\{
		\begin{aligned}
			&i\phi_t=t^{\frac{3p}{2}-2}|\phi|^p \phi, \\
			&\phi(0,x)=\phi_0(x),
		\end{aligned}
		\right.
	\end{align}
	which has the explicit solution $\phi^{(0)}$ defined by
	\begin{equation}
		\phi^{(0)}(t,x)=\phi_0(x)e^{-i\frac{2}{3p-2}t^{\frac{3p}{2}-1}|\phi_0(x)|^p}, \quad \text{for} \ p> \frac{2}{3}.
	\end{equation}
	Moreover, one may choose a suitable function $\phi_0(x)$ such that for every $t\geq 0$, $\phi^{(0)}(t,x) \in \mathcal{S}(\R^3)$. For instance, taking $\phi_0(x) =e^{-|x|^2}\in \mathcal{S}(\R^3)$ suffices.

	We need the following quantitative result.
	\begin{lem}\label{Lem1-quantitative}
		Let $\frac{2}{3}<p\leq 1$. Choose $\phi_0\in\mathcal{S}(\mathbb{R}^3)$ such that $\phi^{(0)}\in \mathcal{S}(\R^3)$. Then there exist constants $C,c>0$ depending only on \(p\) and \(\phi_0\), such that if $0<\mu\leq c$ is a sufficiently small real number, then for $T=c|\log \mu|^{c}$ there exists a solution $\phi\in C([0,T];L_x^2(\R^3))$ of \eqref{NLS: phi equation} satisfying 
		\begin{equation*}
			\sup_{0\leq t\leq c|\log \mu|^{c}} \|\phi(t)-\phi^{(0)}(t)\|_{L_x^2(\R^3)}\leq C\mu.
		\end{equation*}
	\end{lem}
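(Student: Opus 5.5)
We treat $\mu^2\Delta$ as a small perturbation of the ODE flow \eqref{ODE-equation}, in the spirit of Christ--Colliander--Tao small-dispersion analysis. Write $\phi=\phi^{(0)}+w$, so that
\begin{equation*}
i w_t+\frac{\mu^2}{2}\Delta w = -\frac{\mu^2}{2}\Delta\phi^{(0)} + t^{\frac{3p}{2}-2}\big(F(\phi^{(0)}+w)-F(\phi^{(0)})\big),\qquad w(0)=0,
\end{equation*}
where $F(z)=|z|^pz$. Since $e^{\frac{i\mu^2t}{2}\Delta}$ is unitary on $L^2$, Duhamel's formula and an $L^2$ energy estimate give
\begin{equation*}
\|w(t)\|_{L^2}\le \frac{\mu^2}{2}\int_0^t\|\Delta\phi^{(0)}(\tau)\|_{L^2}\,d\tau+\int_0^t \tau^{\frac{3p}{2}-2}\big\|F(\phi^{(0)}+w)-F(\phi^{(0)})\big\|_{L^2}\,d\tau .
\end{equation*}
The exponent $\frac{3p}{2}-2>-1$ because $p>\frac23$, so the weight is integrable near $0$.

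The source term is explicit. Differentiating the phase $\frac{2}{3p-2}t^{\frac{3p}{2}-1}|\phi_0|^p$ twice costs at most powers of $t^{\frac{3p}{2}-1}$ times derivatives of $|\phi_0|^p$. For $\phi_0=e^{-|x|^2}$ we have $|\phi_0|^p=e^{-p|x|^2}$, which is smooth, and this is why the Gaussian is chosen. Hence $\|\Delta\phi^{(0)}(t)\|_{L^2}\le C\langle t\rangle^{K}$ for some $K$ depending on $p$, and the first term is at most $C\mu^2\langle t\rangle^{K+1}$.

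The main obstacle is the nonlinear difference, since $F$ is only $C^{1,p}$ for $p\le1$. The pointwise bound $|F(a+b)-F(a)|\lesssim (|a|^p+|b|^p)|b|$ gives a factor $\|w\|_{L^\infty}^p$, and we do not control $L^\infty$. First, the bound $|F_z|,|F_{\bar z}|\lesssim|a|^p$ on the linear part is harmless: since $|\phi^{(0)}|=|\phi_0|$ is bounded, it contributes at most $C\|w\|_{L^2}$. For the remainder we propose to work at higher regularity. Run the same energy estimate for $w$ in $H^k$ with $k=2>\frac32$. Then $H^2\subset L^\infty$ gives control of $\|w\|_{L^\infty}$, and $\|w\|_{H^2}$ satisfies a similar bound with a source $\mu^2\|\phi^{(0)}\|_{H^4}\le C\mu^2\langle t\rangle^{K'}$. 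There is a difficulty with differentiating $F(\phi^{(0)}+w)$ twice when $p<1$. To avoid it, we use that $|\phi^{(0)}|=e^{-|x|^2}>0$. As long as $\|w\|_{L^\infty}\ll e^{-|x|^2}$ we stay away from $0$ and $F$ is smooth there; in the far region $|x|\gtrsim\sqrt{\log(1/\mu)}$ this fails. If that step proves too delicate, the fallback is to use only $L^2$ together with the Lipschitz-type bound, applied via Gronwall to the quantity $\|w\|_{L^2}+\|w\|_{H^2}$ with a bootstrap hypothesis $\|w\|_{H^2}\le\mu^{1/2}$.

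Granting a Gronwall inequality of the form
\begin{equation*}
\|w(t)\|\le C\mu^2\langle t\rangle^{K''}+C\int_0^t\tau^{\frac{3p}{2}-2}\langle\tau\rangle^{K''}\|w(\tau)\|\,d\tau,
\end{equation*}
we obtain $\|w(t)\|\le C\mu^2\langle t\rangle^{K''}\exp\big(C\langle t\rangle^{K''+1}\big)$. We require this to be at most $C\mu$, which holds provided $\exp(Ct^{K''+1})\le\mu^{-1/2}$, that is, for $t\le c|\log\mu|^{c}$ with $c=1/(K''+1)$ and a small constant. This is exactly the stated time scale $T=c|\log\mu|^c$. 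The bootstrap closes on this interval, and local existence of $\phi$ in $C([0,T];L^2)$ follows by the standard continuation argument for \eqref{NLS: phi equation}.
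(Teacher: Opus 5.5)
\textbf{There is a genuine gap.} The Gronwall inequality you ``grant'' is the whole content of the lemma, and neither of your routes produces it.

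Your Duhamel bound puts $\|F(\phi^{(0)}+w)-F(\phi^{(0)})\|_{L^2}$ inside the time integral. This discards the cancellation that makes the estimate work. In the region where $|w|>|\phi^{(0)}|$ that quantity has size $|w|^{p+1}$, so you are left needing $\|w\|_{L^{2p+2}}^{p+1}$, which $\|w\|_{L^2}$ does not control. The $H^2$ route does not fix this. For $p<1$, $F$ is only $C^{1,p}$, so second derivatives of $F(\phi^{(0)}+w)$ contain factors $|\phi^{(0)}+w|^{p-1}$. These are unbounded precisely in the far region $|x|\gtrsim\sqrt{\log(1/\mu)}$ that you identify. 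The ``fallback'' is circular: closing the bootstrap $\|w\|_{H^2}\le\mu^{1/2}$ requires that same $H^2$ nonlinear estimate.

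The missing idea is to use the $L^2$ energy identity instead of Duhamel. Pairing the equation for $w$ with $\bar w$ and taking imaginary parts gives
\[
\frac{d}{dt}\|w(t)\|_{L^2}^2\le\mu^2\|\Delta\phi^{(0)}(t)\|_{L^2}\|w(t)\|_{L^2}+2t^{\frac{3p}{2}-2}\Big|\mathrm{Im}\int_{\mathbb{R}^3}\big(F(\phi^{(0)}+w)-F(\phi^{(0)})\big)\bar w\,dx\Big|.
\]
By gauge invariance, $\mathrm{Im}\big[(F(a+b)-F(a))\bar b\big]=\mathrm{Im}\big[(|a+b|^p-|a|^p)a\bar b\big]$, because the remaining term $|a+b|^p|b|^2$ is real. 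Now bound this pointwise in two regions:
\begin{itemize}
\item On $\{|b|\le|a|\}$, one has $|F(a+b)-F(a)|\lesssim|a|^p|b|$.
\item On $\{|b|>|a|\}$, use $\big||a+b|^p-|a|^p\big|\le|b|^p$ together with $|b|^{p-1}\le|a|^{p-1}$. This is exactly where the hypothesis $p\le1$ enters.
\end{itemize}
In both regions $\big|\mathrm{Im}[(F(a+b)-F(a))\bar b]\big|\lesssim|a|^p|b|^2$. Since $|\phi^{(0)}(t,x)|=|\phi_0(x)|$ is bounded, the nonlinear term is at most $C\|w\|_{L^2}^2$. This yields
\[
\frac{d}{dt}\|w\|_{L^2}\le C\mu^2(1+t)^C+Ct^{\frac{3p}{2}-2}\|w\|_{L^2},
\]
with no $L^\infty$ or higher-regularity control of $w$ at all. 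From there, your Gronwall computation and the choice $T=c|\log\mu|^c$ go through as you wrote them. This is the paper's argument (its splitting into $A(t)$ and $A^c(t)$).
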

	\begin{proof}
		Since $\phi_0$ is a Schwartz function, the existence of $\phi$ follows from \cite{Nak01siam}. We define $F(z):=|z|^pz$ and 
		\begin{equation*}
			w:=\phi-\phi^{(0)}.
		\end{equation*}
		Then $w$ solves the Cauchy problem
		\begin{align}\label{Lem: Cauchy problem}
			\left\{
			\begin{aligned}
				&iw_t+\frac{\mu^2}{2}\Delta w=-\frac{\mu^2}{2}\Delta\phi^{(0)}+t^{\frac{3p}{2}-2}(F(\phi^{(0)}+w)-F(\phi^{(0)})), \\
				&w(0,x)=0.
			\end{aligned}
			\right.
		\end{align}
		It suffices to show that 
		\begin{equation}\label{Lem- bound}
			\sup_{0\leq t\leq T}\|w(t)\|_{L_x^2(\R^3)}\leq C\mu,
		\end{equation} 
		where $0\leq T\leq c|\log\mu|^c$. 
		
		Taking the \(L^2\)-inner product of \eqref{Lem: Cauchy problem} with \(w\) and then taking imaginary parts, we obtain
		\begin{equation}\label{Lem-derivate}
			\begin{aligned}
				&\frac{\mathrm{d}}{\mathrm{d}t}\|w(t)\|^2_{L_x^2(\R^3)}=2\re \int_{\mathbb{R}^3}w_t\bar{w}\mathrm{d}x\\
				&=-2 \im\int_{\mathbb{R}^3}\big(\frac{\mu^2}{2}\Delta w+\frac{\mu^2}{2}\Delta \phi^{(0)}-t^{\frac{3p}{2}-2}(F(\phi^{(0)}+w)-F(\phi^{(0)}))\big)\bar{w}\mathrm{d}x\\
				&\leq C \mu^2 \|w(t)\|_{L_x^2(\R^3)} \|\Delta\phi^{(0)}\|_{L_x^2(\R^3)} +Ct^{\frac{3p}{2}-2}\left|\im\int_{\mathbb{R}^3}(F(\phi^{(0)}+w)-F(\phi^{(0)}))\bar{w}\mathrm{d}x\right|.
			\end{aligned}
		\end{equation}
		Denote
		\begin{equation*}
			A(t):=\{ x\in\mathbb{R}^3: |w(t,x)|\leq |\phi^{(0)}(t,x)| \}, \quad A^{c}(t):=\{ x \in \R^3: x \notin A(t) \}.
		\end{equation*}
		Then,
		\begin{equation}\label{Lem: Im F-1}
			\begin{aligned}
				&\left|\im\int_{A(t)}(F(\phi^{(0)}+w)-F(\phi^{(0)}))\bar{w}\mathrm{d}x\right| \\
				&\leq C \int_{A(t)}|w|(|\phi^{(0)}|^p+|w|^p)|\bar{w}|\mathrm{d}x\\
				&\leq C  \|w(t)\|_{L_x^2(\R^3)}\|\chi_{A(t)}|\phi^{(0)}(t)|^pw(t)\|_{L_x^2(\R^3)},
			\end{aligned}
		\end{equation}
		and 
		\begin{equation}\label{Lem: Im F-2}
			\begin{aligned}
				&\left|\im\int_{A^c(t)}(F(\phi^{(0)}+w)-F(\phi^{(0)}))\bar{w}\mathrm{d}x\right|\\
				&=\left|\im\int_{A^c(t)}(|\phi^{(0)}+w|^p-|\phi^{(0)}|^p)\phi^{(0)}\bar{w}\mathrm{d}x\right|\\
				&\leq \|w(t)\|_{L_x^2(\R^3)}\|\chi_{A^{c}(t)}|w(t)|^p\phi^{(0)}(t)\|_{L_x^2(\R^3)}.
			\end{aligned}
		\end{equation}
		Since $p\leq 1$ and $\phi_0$ is Schwartz, \eqref{Lem: Im F-1} and \eqref{Lem: Im F-2} imply that 
		\begin{equation}\label{Lem-Im F}
			\left|\im\int_{\mathbb{R}^3}(F(\phi^{(0)}+w)-F(\phi^{(0)}))\bar{w}\mathrm{d}x\right|\leq C\|w(t)\|^2_{L_x^2(\R^3)}.
		\end{equation}
		By \eqref{Lem-derivate} and \eqref{Lem-Im F},
		\begin{equation*}
			\frac{\mathrm{d}}{\mathrm{d}t}\|w(t)\|_{L_x^2(\R^3)}\leq C\mu^2(1+|t|)^C+ Ct^{\frac{3p}{2}-2}\|w(t)\|_{L_x^2(\R^3)}.
		\end{equation*}
		By Gronwall's inequality and the initial condition $w(0)=0$, we obtain 
		\begin{equation*}
			\|w(t)\|_{L_x^2(\R^3)}\leq C\mu^2e^{C(1+|t|)^C}.
		\end{equation*}
		Thus if $0\leq t<c|\log\mu|^c$ for suitably chosen $c$ and $\mu$ is sufficiently small, we obtain \eqref{Lem- bound}.
	\end{proof}
	By Lemma \ref{Lem1-quantitative}, it follows that for $\mu\leq c$ there exists a solution $\phi^{(a,\mu)}(t,x)$ to the equation \eqref{NLS: phi equation} with initial datum 
	\begin{equation*}
		\phi^{(a,\mu)}(0,x)=aw(x),
	\end{equation*}
	where $a\in[\frac{1}{2},1]$ and $w(x)\in \mathcal{S}(\mathbb{R}^3)$ is a fixed nonzero function chosen so that \(\phi^{(a,0)}(t)\in\mathcal{S}(\R^3)\) for every $a\in[\frac{1}{2},1]$ and $t\geq 0$. Moreover, we have
	\begin{equation}\label{dis-continuous estimate}
		\sup_{0\leq t\leq c|\log \mu|^{c}} \|\phi^{(a,\mu)}(t)-\phi^{(a,0)}(t)\|_{L_x^2(\R^3)}\leq C\mu,
	\end{equation}
	where $\phi^{(a,0)}(t,x)$ is the solution of \eqref{ODE-equation}, 
	\begin{equation}\label{ill-initial data}
		\phi^{(a,0)}(t,x)=aw(x)e^{-i\frac{2}{3p-2}t^{\frac{3p}{2}-1}a^p|w(x)|^p}.
	\end{equation}
	
	By \eqref{scale transform} and \eqref{illposedness-phi transform}, we obtain a three-parameter family of solutions $\U^{(a,\mu,\lambda)}$ to the NLS equation \eqref{eq:nls-initial}, where $\frac{1}{2}\leq a\leq 1, 0<\lambda < \mu \ll 1$, and 
	\begin{equation*}
		\U^{(a,\mu,\lambda)}(t,x)=\lambda^{\frac{2}{p}-3} \phi^{(a,\mu)}(\lambda^{-2}t,\lambda^{-1}\mu x).
	\end{equation*}
	We prove the proposition by choosing the three parameters appropriately.
	\begin{proof}[Proof of Proposition \ref{prop-illposed-initial}]
		By a simple calculation,
		\begin{equation}\label{illpose-result-1}
			\begin{aligned}
				\|\U^{(a,\mu,\lambda)}(0,x)\|_{\dot{H}_x^s(\R^3)}&=a\lambda^{-s_c-s}\mu^{s-\frac{3}{2}}\|w(x)\|_{\dot{H}_x^s(\R^3)},\\
				\|\U^{(a,\mu,\lambda)}(0,x)-\U^{(a^{\prime},\mu,\lambda)}(0,x)\|_{\dot{H}_x^s(\R^3)}&=|a-a^{\prime}|\lambda^{-s_c-s}\mu^{s-\frac{3}{2}}\|w(x)\|_{\dot{H}_x^s(\R^3)}.
			\end{aligned}
		\end{equation}
		Let $0<\varepsilon<1$ be fixed. Since $p<\frac{4}{3+2s}$, we have $-s_c-s>0$. We set
		\begin{equation*}
			\lambda^{-s_c-s}\mu^{s-\frac{3}{2}}=\varepsilon.
		\end{equation*}
		On the other hand,
		\begin{equation}\label{proof-illposed-3}
			\begin{aligned}
				\|\U^{(a,\mu,\lambda)}-\U^{(a^{\prime}, \mu, \lambda)}\|^2_{\dot{H}_x^s(\R^3)}	&=\varepsilon^2 \int_{\mathbb{R}^3}|\xi|^{2s}|\widehat{\phi^{(a,\mu)}(\lambda^{-2}t)}(\xi) - \widehat{\phi^{(a^{\prime},\mu)}(\lambda^{-2}t)}(\xi)|^2\mathrm{d}\xi\\
				&\geq \varepsilon^2 \int_{|\xi|\geq k}|\xi|^{2s} |\widehat{\phi^{(a,\mu)}}(\lambda^{-2}t)-\widehat{\phi^{(a^{\prime},\mu)}}(\lambda^{-2}t)|^2\mathrm{d}\xi\\
				&\geq \frac{1}{2}\varepsilon^2 k^{2s}\int_{|\xi|\geq k} |\widehat{\phi^{(a,0)}}(\lambda^{-2}t) - \widehat{\phi^{(a^{\prime},0)}}(\lambda^{-2}t)|^2\mathrm{d}\xi \\
				&\qquad - 2\varepsilon^2 k^{2s}\int_{|\xi|\geq k} |\widehat{\phi^{(a,\mu)}}(\lambda^{-2}t) - \widehat{\phi^{(a,0)}}(\lambda^{-2}t)|^2\mathrm{d}\xi \\
				&\qquad - 2\varepsilon^2 k^{2s}\int_{|\xi|\geq k} |\widehat{\phi^{(a^{\prime},\mu)}}(\lambda^{-2}t) - \widehat{\phi^{(a^{\prime},0)}}(\lambda^{-2}t)|^2\mathrm{d}\xi,
			\end{aligned}
		\end{equation}
		where $k>0$ is defined later. From an inspection of \eqref{ill-initial data} we see that there exists $T=T(a,a^{\prime})>0$ such that 
		\begin{equation*}
			\|\widehat{\phi^{(a,0)}}(T)-\widehat{\phi^{(a^{\prime},0)}}(T)\|^2_{L_{\xi}^2(\R^3)}=\|\phi^{(a,0)}(T)-\phi^{(a^{\prime},0)}(T)\|^2_{L_x^2(\R^3)}\geq c_1,
		\end{equation*}
		where $c_1>0$ is independent of $a$ and $a^{\prime}$. Note that $\|\widehat{\phi^{(a,0)}}\|_{L_{\xi}^{\infty}(\R^3)}\leq \|\phi^{(a,0)}\|_{L_x^1(\R^3)}\leq C$,
		\begin{equation}\label{proof-illposed-3-I}
			\begin{aligned}
				&\int_{|\xi|\geq k} |\widehat{\phi^{(a,0)}}(T) - \widehat{\phi^{(a^{\prime},0)}}(T)|^2\mathrm{d}\xi \\
				&= \|\phi^{(a,0)}(T) - \phi^{(a^{\prime},0)}(T)\|^2_{L_x^2(\R^3)}- \int_{|\xi|< k}|\widehat{\phi^{(a,0)}}(T) - \widehat{\phi^{(a^{\prime},0)}}(T)|^2\mathrm{d}\xi\\
				&\geq c_1 - c_2k^3,
			\end{aligned}
		\end{equation}
		where $c_2>0$ is independent of $a$ and $a^{\prime}$.
		
		Thus, set $k=(\frac{c_1}{2c_2})^{\frac{1}{3}}$ and let $t=\lambda^2 T$. Then, by \eqref{proof-illposed-3}, \eqref{proof-illposed-3-I} and \eqref{dis-continuous estimate}, we obtain
		\begin{equation}\label{illpose-result-2}
			\|\U^{(a,\mu,\lambda)}(\lambda^2 T)-\U^{(a^{\prime}, \mu, \lambda)}(\lambda^2T)\|^2_{\dot{H}_x^s(\R^3)}\geq \frac{c_1}{4}k^{2s}\varepsilon^2-Ck^{2s}\mu^2\varepsilon^2.
		\end{equation}
		
		Choose $a\neq a'$ such that $|a-a^{\prime}|<\delta$. Let $\mu\rightarrow 0$, and hence $\lambda^2 T\rightarrow 0$. By \eqref{illpose-result-1} and \eqref{illpose-result-2}, we finish the proof of Proposition \ref{prop-illposed-initial}. 
	\end{proof}

	\vspace{2cm}
	
	\section{Finite pseudo-conformal energy}\label{sec:finite pce}
	
	\vspace{0.5cm}

	We are now in a position to prove Proposition \ref{thm:gwp}. After employing the pseudo-conformal transform, it suffices to prove that the equation \eqref{eq:nls-initial} admits a solution $\U\in C([0,T_0];H_x^1(\R^3))$ for any $T_0>0$. Recall the equation
	\begin{equation}\label{eq:nls-h1}
		\U(t)=S(t)\U_0-i\int_{0}^t S(t-\tau) \ta^{\frac{3p}2-2}|\U(\ta)|^p\U(\ta)\dd \ta,
	\end{equation}
	with $\U_0\in H_x^1(\R^3)$. We first give a local result in $H_x^1(\R^3)$.
	\begin{prop}\label{prop:h1gwp-lwpI}
		Let $\frac{4}{3}< p \leq 4$. For any $\U_0\in H_x^1(\R^3)$, there exists $t_0=t_0(\norm{\U_0}_{H_x^1(\R^3)})$ such that the equation \eqref{eq:nls-h1} admits a unique solution
		\begin{equation*}
			\U\in C([0,t_0];H_x^1(\R^3)).
		\end{equation*}
	\end{prop}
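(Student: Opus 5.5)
We run a contraction mapping argument for \eqref{eq:nls-h1} in a Strichartz space at $H^1$ regularity. The nonlinearity $\tau^{\frac{3p}{2}-2}|\U|^p\U$ is energy-subcritical or critical ($p\le 4$), and the time weight $\tau^{\frac{3p}{2}-2}$ is singular at $\tau=0$ exactly when $p<\frac43$. Since $p>\frac43$, the exponent $\frac{3p}{2}-2$ is positive, so the weight is bounded by $t_0^{\frac{3p}{2}-2}$ on $[0,t_0]$ and gives extra smallness. The case $p=4$ is the only genuinely critical one.

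Set $I=[0,t_0]$ and work in
\begin{equation*}
	E:=\fbrk{\U:\ \norm{\U}_{L_t^\I H_x^1(I\times\R^3)}+\norm{\jb{\nabla}\U}_{L_t^{q}L_x^{r}(I\times\R^3)}\le 2C\norm{\U_0}_{H_x^1}}
\end{equation*}
with the admissible pair $(q,r)=\big(\frac{4(p+2)}{3p},p+2\big)$, endowed with the weaker metric $d(\U,\V)=\norm{\U-\V}_{L_t^\I L_x^2\cap L_t^qL_x^r}$. This metric makes $E$ complete, since balls in the Strichartz norm are weak-$*$ closed. Let $\Phi$ be the right-hand side of \eqref{eq:nls-h1}. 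By Lemma \ref{lem:strichartz}, with the dual pair $(q',r')$, and the fractional/ordinary chain rule $\abs{\nabla(|\U|^p\U)}\lsm|\U|^p|\nabla\U|$, Hölder in space gives
\begin{equation*}
	\norm{\jb{\nabla}(|\U|^p\U)}_{L_x^{r'}}\lsm\norm{\U}_{L_x^{r}}^{p}\norm{\jb{\nabla}\U}_{L_x^r}.
\end{equation*}
In time we have one factor in $L_t^q$ and $\norm{\U}_{L_x^r}^p$ controlled by $\norm{\U}_{L_t^\I H^1}^p$ through Sobolev $H^1\subset L^{p+2}$, which is valid since $p+2\le 6$. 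Hence
\begin{equation*}
	\norm{\Phi(\U)}_{E}\le C\norm{\U_0}_{H^1}+C\,t_0^{\frac{3p}{2}-2}\,t_0^{\frac1{q'}-\frac1q}\norm{\U}_{L_t^\I H^1}^p\norm{\jb{\nabla}\U}_{L_t^qL_x^r}.
\end{equation*}
For $p<4$ the factor $1-\frac2q=\frac{4-p}{2(p+2)}$ is positive. Together with the positive weight exponent, this gives a positive power of $t_0$, so $t_0$ depends only on $\norm{\U_0}_{H^1}$. The difference estimate is identical at the $L^2$ level, using $||\U|^p\U-|\V|^p\V|\lsm(|\U|^p+|\V|^p)|\U-\V|$, so no derivative of the difference and no $C^2$ regularity of $F$ is needed even when $p<1$. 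This is the reason for the weaker metric: it handles $\frac43<p<1$ uniformly. Continuity $\U\in C(I;H^1)$ then follows from the Duhamel formula and Strichartz.

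The main obstacle is the endpoint $p=4$, where $1-\frac2q=0$ and the above time gain disappears. There I will use the energy-critical pair $(q,r)=(10,\frac{30}{13})$ for $\nabla\U$, together with $\U\in L_t^{10}L_x^{10}$ via Sobolev. The nonlinear term is estimated in $L_t^2L_x^{6/5}$ as
\begin{equation*}
	\norm{\nabla(|\U|^4\U)}_{L_t^2L_x^{6/5}}\lsm\norm{\U}_{L^{10}_{t,x}}^4\norm{\nabla\U}_{L_t^{10}L_x^{30/13}}.
\end{equation*}
Here the weight $\tau^{4}\le t_0^4$ is harmless, and actually small. Uniform dependence on $\norm{\U_0}_{H^1}$ alone then comes precisely from the factor $t_0^{4}$, since $\frac{3p}{2}-2=4>0$: it makes the nonlinear term small for $t_0$ small depending only on the $H^1$ size. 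This differs from the usual critical case, where smallness of $\norm{S(t)\U_0}$ is needed and $t_0$ depends on the profile. I therefore expect the $t_0^{\frac{3p}{2}-2}$ factor to give a time $t_0=t_0(\norm{\U_0}_{H^1})$ throughout $\frac43<p\le 4$. Uniqueness in $C(I;H^1)$ will be obtained by the standard argument: any $H^1$ solution lies in the Strichartz spaces on a short interval, and the difference estimate applies there.
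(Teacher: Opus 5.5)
Your proposal is correct and follows the same route as the paper: a Strichartz contraction in a ball of $L_t^\infty H_x^1\cap L_t^q W_x^{1,r}$ on $[0,t_0]$, where the smallness comes from the positive power of the weight $\tau^{\frac{3p}{2}-2}$; the paper uses the pair $(8,\frac{12}{5})$ and the dual norm $L_t^{\frac{8}{8-p}}L_x^{\frac{12}{p+6}}$, which gives the factor $t_0^{\frac54p-1}$ uniformly for $\frac43<p\le4$. Your separate treatment of $p=4$ is harmless but unnecessary: your general bound still carries the factor $t_0^{\frac{3p}{2}-2}=t_0^{4}$ at $p=4$, and the corresponding pair $(q,r)=(2,6)$ is admissible in three dimensions, so the same estimate already covers the endpoint.
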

	\begin{proof}
		Let $I:=[0,t_0]$ for some $t_0>0$. We restrict the variable $(t,x)\in I\times\R^3$. By the Strichartz estimate, we have that for any admissible pair $(q,r)$,
		\EQ{
			\normB{\jb{\nabla}\int_0^t S(t-\tau) \ta^{\frac{3p}2-2}|\U(\ta)|^{p}\U(\ta)\dd \ta}_{L_t^{q} L_x^r} \lsm & \normb{\ta^{\frac{3p}2-2}|\U|^{p}\jb{\nabla}\U}_{L_t^{\frac{8}{8-p}} L_x^{\frac{12}{p+6}}} \\
			\lsm & \normb{t^{\frac{3p}2-2}}_{L_t^{\frac{4}{4-p}}} \norm{\U}_{L_t^8 L_x^{12}}^p \normb{\jb{\nabla}\U}_{L_t^{\I} L_x^{2}} \\
			\lsm & t_0^{\frac54p-1} \norm{\nabla\U}_{L_t^{8} L_x^{\frac{12}{5}}}^p \norm{\jb{\nabla}\U}_{L_t^{\I} L_x^{2}},
		}
		where $\frac{4}{4-p}:=\infty$ when $p=4$. Then, this proposition follows by the standard contraction mapping argument in
		\begin{equation*}
			\fbrk{\U\in C(I; H_x^{1}(\R^3)): \norm{\jb{\nabla}\U}_{L_t^{\I} L_x^{2} \cap L_t^{8} L_x^{\frac{12}{5}}(I\times\R^3)} \le R},
		\end{equation*}
		where $R:=C\norm{\U_0}_{H_x^1(\R^3)}$ and $t_0$ denotes some constant depending on $\norm{\U_0}_{H_x^1(\R^3)}$.
	\end{proof}
	
	\subsection{A priori estimate}
	
	For \eqref{eq:nls-initial}, we define the pseudo-conformal energy
	\begin{equation}\label{Pseudo-conformal-energy}
		\wt{\E}(t):=\frac{1}{4}t^{2-\frac{3p}{2}}\int_{\R^3}|\nabla\U|^2\mathrm{d}x+\frac{1}{p+2}\int_{\R^3}|\U|^{p+2}\mathrm{d}x.
	\end{equation}
	\begin{prop}[A priori estimate]\label{prop:h1gwp-energy}
		Let $\frac{4}{3}<p\leq 4$. Assume $\U_0\in H_x^1(\R^3)$ and that, for some $t_3\in [t_0,T_0]$, $\U\in C([t_0,t_3]; H_x^1(\R^3))$, where $t_0$ is given in Proposition \ref{prop:h1gwp-lwpI}. Then, we have
		\begin{equation}\label{esti-E(t)}
			\sup_{t\in[t_0,t_3]}\wt \E(t)\le C(\norm{\U_0}_{ H_x^1(\R^3)}).
		\end{equation}
		Moreover, 
		\begin{equation}\label{esti-H1}
			\sup_{t_0\leq t\leq t_3} \|\U(t)\|^2_{H_x^1(\R^3)} \leq C(\|\U_0\|_{H_x^1(\R^3)})\big(T_0^{\frac{3p}{2}-2}+1\big).
		\end{equation}
	\end{prop}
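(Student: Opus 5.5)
The plan is to show that $\wt\E(t)$ is nonincreasing on $[t_0,t_3]$, and then to convert the energy bound together with mass conservation into the $H^1$ bound. Write $a:=\frac{3p}{2}-2$. The range $\frac43<p\le4$ is exactly $0<a\le4$, and in particular $a>0$, which is what makes the monotonicity work.

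\emph{Step 1 (energy identities).} For a sufficiently regular solution of \eqref{eq:nls-initial}, pairing the equation with $\bar\U$ and taking imaginary parts gives conservation of mass, $\|\U(t)\|_{L^2_x}=\|\U_0\|_{L^2_x}$, because the nonlinear coefficient $t^a|\U|^p$ is real. Next consider the autonomous-type energy
\begin{equation*}
E(t):=\frac14\int_{\R^3}|\nabla\U|^2\dx+\frac{t^a}{p+2}\int_{\R^3}|\U|^{p+2}\dx .
\end{equation*}
Pairing the equation with $\bar\U_t$ and taking real parts, the only term that does not cancel is the explicit time dependence of the coefficient. This gives
\begin{equation*}
\frac{d}{dt}E(t)=\frac{a\,t^{a-1}}{p+2}\int_{\R^3}|\U|^{p+2}\dx .
\end{equation*}
Since $\wt\E(t)=t^{-a}E(t)$, it follows that
\begin{equation*}
\frac{d}{dt}\wt\E(t)=-a\,t^{-a-1}E(t)+\frac{a\,t^{-1}}{p+2}\int_{\R^3}|\U|^{p+2}\dx=-\frac a4\,t^{-a-1}\int_{\R^3}|\nabla\U|^2\dx\le0 .
\end{equation*}
Hence $\wt\E$ is nonincreasing on $[t_0,t_3]$.

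\emph{Step 2 (initial size of the energy).} We have $\sup_{[t_0,t_3]}\wt\E\le\wt\E(t_0)$. By Proposition \ref{prop:h1gwp-lwpI}, $\|\U(t_0)\|_{H^1_x}\lesssim\|\U_0\|_{H^1_x}$, with $t_0=t_0(\|\U_0\|_{H^1_x})>0$. Since $p+2\le6$, Sobolev embedding $H^1\hookrightarrow L^{p+2}$ applies, and
\begin{equation*}
\wt\E(t_0)\lesssim t_0^{-a}\|\U_0\|_{H^1_x}^2+\|\U_0\|_{H^1_x}^{p+2}=C(\|\U_0\|_{H^1_x}).
\end{equation*}
This proves \eqref{esti-E(t)}.

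\emph{Step 3 (the $H^1$ bound).} For $t\le t_3\le T_0$ we have
\begin{equation*}
\|\nabla\U(t)\|_{L^2_x}^2\le4t^a\wt\E(t)\le C\,T_0^{a}.
\end{equation*}
Adding the conserved mass $\|\U_0\|_{L^2_x}^2$ gives \eqref{esti-H1}.

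\emph{Main obstacle: justifying the identities at $H^1$ regularity.} The computations above require more regularity than $H^1$, and $p=4$ is energy-critical. I would first prove them for data $\U_0\in H^2$ (or Schwartz data), where persistence of regularity on $[t_0,t_3]$ follows from the same Strichartz contraction used for Proposition \ref{prop:h1gwp-lwpI}. For such solutions I would verify the integrated form of the identities. I would then pass to general $H^1$ data using continuous dependence in $C([t_0,t_3];H^1_x)$. Near $t=0$ the factor $t^{a-1}$ may be singular, but only $t\ge t_0>0$ is used, so it causes no difficulty. The passage to the limit in the $L^{p+2}$ term uses the Sobolev embedding again.
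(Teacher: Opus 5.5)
Your proposal is correct and follows the paper's proof. You use the same pairing with $\overline{\U_t}$ to get $\frac{d}{dt}\wt{\E}(t)=-\frac a4\,t^{-a-1}\|\nabla\U(t)\|_{L^2_x}^2\le 0$; you merely route the computation through $E(t)=t^{a}\wt{\E}(t)$. You then bound $\wt{\E}(t_0)$ via Proposition \ref{prop:h1gwp-lwpI} and Sobolev, and use mass conservation for the $H^1$ bound, exactly as the paper does. Your regularization step, which justifies the identities at $H^1$ regularity, is a detail the paper leaves implicit.
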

	\begin{proof}
		By Sobolev's inequality and Proposition \ref{prop:h1gwp-lwpI},
		\begin{equation*}
			\wt \E(t_0) \lsm_{t_0}  \|\U\|_{L_t^\infty([0,t_0];H_x^1(\R^3))}^2 + \|\U\|_{L_t^\infty ([0,t_0];H_x^1(\R^3))}^{p+2}
			\le C(\|\U_0\|_{H_x^1(\R^3)}).
		\end{equation*}
		
		Recall the equation for $\U$:
		\begin{equation*}
			i\pd_t \U + \frac12\De \U = t^{\frac{3p}{2}-2} |\U|^p\U.
		\end{equation*}
		Multiplying the equation by $t^{2-\frac{3p}{2}}\wb{\U_t}$, integrating in $x$, and taking the real part, we obtain 
		\begin{equation*}
			\frac12t^{2-\frac {3p}2}\re\int_{\R^3} \De \U\wb\U_t \dd x = \re\int_{\R^3} |\U|^p\U \wb\U_t \dd x.
		\end{equation*}
		Integrating by parts, we obtain
		\begin{equation*}
			- \frac12t^{2-\frac {3p}2}\re\int_{\R^3} \nabla \U\cdot\nabla\wb\U_t \dd x = \re\int_{\R^3} |\U|^p\U \wb\U_t \dd x,
		\end{equation*}
		which gives
		\begin{equation*}
			- \frac14t^{2-\frac {3p}2}\pd_t\brkb{\int_{\R^3} |\nabla \U|^2 \dd x} = \frac{1}{p+2} \pd_t\brkb{\int_{\R^3} |\U|^{p+2} \dd x}.
		\end{equation*}
		Since $p>\frac43$, we have
		\begin{equation*}
			\pd_t\wt \E(t) = \frac14(2-\frac {3p}2)t^{1-\frac {3p}2} \int_{\R^3} |\nabla \U|^2 \dd x 
			\le  0.
		\end{equation*}
		Therefore, for any $t_0\le t\le t_3$,
		\begin{equation*}
			\wt \E(t) \le \wt \E(t_0) \le C(\norm{\U_0}_{ H_x^1(\R^3)}).
		\end{equation*}
		Thus, \eqref{esti-E(t)} holds. By \eqref{Pseudo-conformal-energy}, we have
		\begin{equation*}
			\sup_{t_0\leq t\leq t_3}\|\nabla\U(t)\|^2_{L_x^2(\R^3)} \leq C(\norm{\U_0}_{ H_x^1(\R^3)}) T_0^{\frac{3p}{2}-2}. 
		\end{equation*}
		By conservation of mass, we obtain \eqref{esti-H1}.
	\end{proof}
	
	\subsection{Extension of the solution}
	In order to extend the solution, consider the equation
	\begin{equation}\label{eq:nls-h1gwp}
		\U(t)=S(t-t_1)\U(t_1)-i\int_{t_1}^t S(t-\tau) \ta^{\frac{3p}2-2}|\U(\ta)|^p\U(\ta)\dd \ta.
	\end{equation}
	
	\begin{prop}\label{prop:h1gwp-lwpII}
		Let $\frac{4}{3}<p<4$, $\U(t_1,x)\in H_x^{1}(\R^3)$, $T_0>1$, and $t_0$ be given in Proposition \ref{prop:h1gwp-lwpI}. Then, for any $t_1\in [t_0,T_0)$, there exists $0<t_2\le\min\fbrk{1,T_0-t_1}$ depending on $\norm{\U(t_1)}_{H^1(\R^3)}$ and $T_0$ such that the equation \eqref{eq:nls-h1gwp} admits a unique solution $\U \in C([t_1,t_1+t_2]; H_x^1(\R^3))$.
	\end{prop}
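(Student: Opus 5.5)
The plan is to run the same contraction argument as in Proposition \ref{prop:h1gwp-lwpI}, now starting at $t_1\ge t_0>0$ instead of $0$. Set $I:=[t_1,t_1+t_2]$ and define
\begin{equation*}
\Phi(\U)(t):=S(t-t_1)\U(t_1)-i\int_{t_1}^t S(t-\tau)\tau^{\frac{3p}{2}-2}|\U(\tau)|^p\U(\tau)\dd\tau .
\end{equation*}
We work on the ball
\begin{equation*}
B_R:=\fbrk{\U\in C(I;H_x^1(\R^3)):\ \norm{\jb{\nabla}\U}_{L_t^\I L_x^2\cap L_t^8L_x^{\frac{12}{5}}(I\times\R^3)}\le R},
\end{equation*}
with $R:=2C\norm{\U(t_1)}_{H_x^1(\R^3)}$, where $C$ is the Strichartz constant from Lemma \ref{lem:strichartz}. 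The metric is $d(\U,\V):=\norm{\U-\V}_{L_t^\I L_x^2\cap L_t^8L_x^{12/5}}$, which is at the $L^2$ level and so avoids differentiating the difference of the non-smooth nonlinearity. The ball is complete for this metric by the usual weak-* lower semicontinuity argument.

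The key estimate is the one already used before. By Strichartz with the dual pair $(\frac{8}{8-p},\frac{12}{p+6})$, which is dual to an admissible pair, followed by Hölder in $x$ and Sobolev $\dot W^{1,12/5}\hookrightarrow L^{12}$,
\begin{equation*}
\normB{\jb{\nabla}\int_{t_1}^t S(t-\tau)\tau^{\frac{3p}2-2}|\U|^p\U\dd\tau}_{L_t^\I L_x^2\cap L_t^8L_x^{\frac{12}{5}}}\lsm \normb{\tau^{\frac{3p}2-2}}_{L_\tau^{\frac{4}{4-p}}(I)}\norm{\nabla\U}^p_{L_t^8L_x^{\frac{12}{5}}}\norm{\jb\nabla\U}_{L_t^\I L_x^2}.
\end{equation*}
The only new point is the time factor. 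Since $p<4$, we have $\frac{4}{4-p}<\I$. On $I\subset[t_0,T_0]$ the weight $\tau^{\frac{3p}{2}-2}$ is bounded by $\max(t_0^{\frac{3p}{2}-2},T_0^{\frac{3p}{2}-2})$. Because $p>\frac43$ the exponent is positive, so the bound is simply $T_0^{\frac{3p}{2}-2}$. This gives
\begin{equation*}
\normb{\tau^{\frac{3p}2-2}}_{L^{\frac{4}{4-p}}(I)}\le T_0^{\frac{3p}{2}-2}t_2^{\frac{4-p}{4}}.
\end{equation*}
Hence $\norm{\Phi(\U)}\le R/2+C T_0^{\frac{3p}2-2}t_2^{1-\frac p4}R^{p+1}$.

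For the difference we use $|F(\U)-F(\V)|\lsm(|\U|^p+|\V|^p)|\U-\V|$ and the same exponents without the derivative. Hölder and Sobolev then give
\begin{equation*}
d(\Phi\U,\Phi\V)\le CT_0^{\frac{3p}2-2}t_2^{1-\frac p4}R^p d(\U,\V).
\end{equation*}
We choose
\begin{equation*}
t_2:=\min\fbrk{1,\,T_0-t_1,\,\big(4CT_0^{\frac{3p}2-2}R^p\big)^{-\frac{4}{4-p}}}.
\end{equation*}
With this choice $\Phi$ maps $B_R$ into itself and is a contraction, and $t_2$ depends only on $\norm{\U(t_1)}_{H^1}$ and $T_0$. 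Continuity in time in $H^1$ follows from the Duhamel formula and Strichartz. Uniqueness holds in the Strichartz class and, by the standard argument of shrinking the interval, in $C(I;H^1)$.

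The only real obstacle is keeping $t_2$ independent of $t_1$. This requires the bound on $\tau^{\frac{3p}{2}-2}$ to be uniform on $[t_0,T_0]$. The restriction $p<4$ is what leaves the positive power $t_2^{1-p/4}$ to make the factor small, so the endpoint $p=4$ is excluded here. Combined with the uniform $H^1$ bound \eqref{esti-H1}, this uniform $t_2$ will allow finitely many steps to reach $T_0$.
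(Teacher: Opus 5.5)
Your proposal is correct and follows the paper's proof. Both use the same Strichartz estimate with the dual pair $(\frac{8}{8-p},\frac{12}{p+6})$, the same time bound $\|\tau^{\frac{3p}{2}-2}\|_{L^{\frac{4}{4-p}}(I)}\le T_0^{\frac{3p}{2}-2}t_2^{\frac{4-p}{4}}$ (from $p>\frac43$ and $I\subset[t_0,T_0]$), and a contraction in the same ball, with $p<4$ providing the small factor. You simply make explicit what the paper calls standard, namely the $L^2$-level metric and the choice of $t_2$. One minor point: for completeness it is cleaner to define the ball in $L^\infty_tH^1_x\cap L^8_tW^{1,\frac{12}{5}}_x$ rather than in $C(I;H^1_x)$, and then recover $C(I;H^1_x)$ from the Duhamel formula, as you in effect already do.
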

	\begin{proof}
		Take some $t_2$ that will be defined later such that $0<t_2\le\min\fbrk{1,T_0-t_1}$. In the proof of this proposition, we denote $I:=[t_1,t_1+t_2]$, and restrict $(t,x)$ to $I\times\R^3$. We have that for any admissible $(q,r)$,
		\EQ{
			\normB{\jb{\nabla}\int_{t_1}^t S(t-\tau) \ta^{\frac{3p}2-2}|\U(\ta)|^{p}\U(\ta)\dd \ta}_{L_t^{q} L_x^r} & \lsm  \normb{t^{\frac{3p}2-2}|\U|^{p}\jb{\nabla}\U}_{L_t^{\frac{8}{8-p}} L_x^{\frac{12}{p+6}}} \\
			& \lsm T_0^{\frac{3p}2-2} t_2^{\frac{4-p}{4}} \norm{\U}_{L_t^8 L_x^{12}}^p \normb{\jb{\nabla}\U}_{L_t^{\I} L_x^{2}} \\
			&\lsm_{T_0}  t_2^{\frac{4-p}{4}} \norm{\nabla\U}_{L_t^{8} L_x^{\frac{12}{5}}}^p \norm{\jb{\nabla}\U}_{L_t^{\I} L_x^{2}}.
		}
		Then, this proposition follows by the standard contraction mapping argument in the resolution space
		\begin{equation*}
			\fbrk{\U\in C(I; H_x^{1}(\R^3)): \norm{\jb{\nabla}\U}_{L_t^{\I} L_x^{2} \cap L_t^{8} L_x^{\frac{12}{5}} (I\times\R^3)}\le R},
		\end{equation*}
		where $R:=C\norm{\U(t_1)}_{H_x^1(\R^3)}$.
	\end{proof}

	The endpoint \(p=4\) is more delicate, since the time-smallness factor in Proposition \ref{prop:h1gwp-lwpII} disappears. We handle this case using the energy-critical NLS theory and a perturbative argument.
	
	\begin{prop}\label{prop:p4-endpoint}
		Let $p=4$, $\U(t_1,x)\in H_x^1(\mathbb R^3)$, $T_0>1$, and $t_0$ be given in Proposition~\ref{prop:h1gwp-lwpI}. Then, for any $t_1\in[t_0,T_0)$, there exists $0<t_2\leq\min\{1,T_0-t_1\},$
		depending on $\|\U(t_1)\|_{H_x^1(\mathbb R^3)}$, $t_0$, and $T_0$, such that equation \eqref{eq:nls-h1gwp} admits a unique solution $\U\in C\bigl([t_1,t_1+t_2];H_x^1(\mathbb R^3)\bigr).$
	\end{prop}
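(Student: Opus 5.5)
For $p=4$ the weight $\tau^{\frac{3p}2-2}=\tau^4$ satisfies $t_0^4\le\tau^4\le T_0^4$ on $[t_1,t_1+t_2]$, and the nonlinearity $\tau^4|\U|^4\U$ is energy-critical. The time-smallness in Proposition \ref{prop:h1gwp-lwpII} therefore disappears. Our plan is to compare $\U$ with the autonomous defocusing quintic NLS with frozen coefficient,
\begin{equation*}
	i\pd_t v+\tfrac12\De v=t_1^4|v|^4v,\qquad v(t_1)=\U(t_1),
\end{equation*}
and treat the discrepancy $(\tau^4-t_1^4)|\U|^4\U$ as a perturbation that is small on a short interval.

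First, we rescale $v$ (say $w(t,x)=t_1 v(t,x)$) to reduce to the standard defocusing energy-critical NLS $i\pd_t w+\frac12\De w=|w|^4w$ in $\R^3$. By the global well-posedness and scattering theory for this equation (Colliander--Keel--Staffilani--Takaoka--Tao), $v$ is global. It satisfies
\begin{equation*}
	\|\jb{\nabla} v\|_{L_t^\infty L_x^2\cap L_t^{10}L_x^{\frac{30}{13}}(\R\times\R^3)}\le L,
\end{equation*}
where $L$ depends only on $\|\U(t_1)\|_{H^1}$ and $t_0,T_0$ (through $t_1\in[t_0,T_0]$). Here we use that the spacetime bound depends only on the energy, and that the $L^2$ part follows from mass conservation together with Strichartz after subdividing time.

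Second, we seek $\U=v+e$ on $I=[t_1,t_1+t_2]$ and run a contraction for $e$ with $e(t_1)=0$. The equation for $e$ has forcing
\begin{equation*}
	(\tau^4-t_1^4)|v+e|^4(v+e)+t_1^4\bigl(|v+e|^4(v+e)-|v|^4v\bigr).
\end{equation*}
The first term is bounded by $|\tau^4-t_1^4|\lesssim T_0^3t_2$, which supplies a small factor.

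For the second term we use the standard long-time perturbation (stability) lemma for energy-critical NLS. We split $I$ into $J=J(L,\eta)$ subintervals on which $\|\nabla v\|_{L^{10}_tL^{30/13}_x}\le\eta$, and on each one estimate the dual Strichartz norm $L_t^2L_x^{6/5}$ of $\nabla$ applied to the difference. We use the chain rule and H\"older with $\|\nabla v\|^4_{L^{10}L^{30/13}}$, $\|\nabla e\|_{L^{10}L^{30/13}}$ and Sobolev embedding $\dot W^{1,30/13}\subset L^{10}$. Iterating over the $J$ intervals gives
\begin{equation*}
	\|\jb{\nabla}e\|_{S(I)}\lesssim_{L} T_0^3t_2 (L+\|e\|_S)^5,
\end{equation*}
which is small once $t_2$ is chosen small depending on $L,T_0$. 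This closes a bootstrap or contraction, with uniqueness in the same class. The $L^2$ level of $\jb{\nabla}$ is handled identically, since the nonlinearity is only linear in the differentiated factor.

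The main obstacle is this second step: the smallness must come solely from the coefficient variation, while the critical interaction with $v$ is controlled only through $v$'s finite (not small) Strichartz norm. This is why we invoke the full energy-critical scattering result and the long-time perturbation lemma rather than a direct contraction. If the perturbation-lemma bookkeeping proves awkward, an alternative is to choose $t_2$ so small that $\|\nabla S(t-t_1)\U(t_1)\|_{L^{10}L^{30/13}(I)}$ is small. This alone, however, gives a $t_2$ depending on the profile of $\U(t_1)$ rather than just its norm, so the stability approach is preferred.
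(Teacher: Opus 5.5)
Your proposal is correct and follows essentially the same route as the paper. You freeze the coefficient at $t_1$, rescale by $t_1$ to the standard defocusing quintic NLS, and invoke the Colliander--Keel--Staffilani--Takaoka--Tao spacetime bounds, which depend only on $\|\U(t_1)\|_{H^1_x}$, $t_0$, $T_0$. You then close a perturbative argument for the difference over finitely many subintervals where the frozen solution's Strichartz norm is small, with the only smallness coming from $|\tau^4-t_1^4|\lesssim T_0^3t_2$. The remaining differences from the paper are cosmetic: you place the coefficient error on $|v+e|^4(v+e)$ instead of $|v|^4v$, and you use the pair $L_t^{10}L_x^{30/13}$ instead of $L_t^{8}L_x^{12/5}$.
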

	
	\begin{proof}	
		We denote $I:=[t_1, t_1+t_2]$. Consider first the frozen-coefficient equation
		\begin{equation}\label{eq:p4-frozen-equation}
			\V(t)=S(t-t_1)\U(t_1)
			-i t_1^4\int_{t_1}^tS(t-\tau)|\V(\tau)|^4\V(\tau)\,d\tau.
		\end{equation}
		Setting $\W=t_1\V$, we obtain
		\begin{equation*}
			i\partial_t\W+\frac12\Delta \W=|\W|^4\W,
			\qquad \W(t_1)=t_1\U(t_1).
		\end{equation*}
		This is the standard three-dimensional defocusing energy-critical NLS. Define the conserved energy by
		\begin{equation*}
			E(\W(t)):=\frac14\|\nabla \W(t)\|^2_{L_x^2(\mathbb R^3)}+\frac16\|\W(t)\|^6_{L_x^6(\R^3)}.
		\end{equation*}
		Since $\W(t_1)=t_1\U(t_1)$ and $t_1\leq T_0$, Sobolev embedding gives
		\begin{equation*}
			E(\W(t_1))\lesssim t_1^2\|\U(t_1)\|_{H^1_x(\R^3)}^2+t_1^6\|\U(t_1)\|_{H^1_x(\R^3)}^6
			\lesssim C(T_0, \|\U(t_1)\|_{H^1_x(\R^3)}).
		\end{equation*}
		Hence, by \cite[Theorem~1.1 and Lemma~3.12]{CKSTT08Annals}, $\W$ is global and satisfies
		\begin{equation}\label{eq:p4-W-bound}
			\|\W\|_{L_{t,x}^{10}(\mathbb R\times\mathbb R^3)}
			+\|\nabla \W\|_{L_t^8L_x^{12/5}(\mathbb R\times\mathbb R^3)}
			+\|\nabla \W\|_{L_t^\infty L_x^2(\mathbb R\times\mathbb R^3)}
			\leq B_0,
		\end{equation}
		where $B_0=B_0(\|\U(t_1)\|_{H_x^1(\R^3)},T_0)$. By mass conservation, we have $\|\W\|_{L_t^\infty L_x^2(\mathbb R\times\mathbb R^3)}
		\leq t_1\|\U(t_1)\|_{H_x^1(\R^3)}$. Enlarging $B_0$ if necessary, and using $\V=t_1^{-1}\W$ and $t_1\geq t_0$, we obtain
		\begin{equation}\label{eq:p4-frozen-bound}
			\|\V\|_{L_{t,x}^{10}(\mathbb R\times\mathbb R^3)}
			+\|\nabla \V\|_{L_t^8L_x^{12/5}(\mathbb R\times\mathbb R^3)}
			+\|\langle \nabla \rangle \V\|_{L_t^\infty L_x^2(\mathbb R\times\mathbb R^3)}\leq t_0^{-1}B_0.
		\end{equation}
		
		We regard $\V$ as an approximate solution to equation \eqref{eq:nls-h1gwp}. Its error is
		\begin{equation*}
			e(t,x)=(t^4-t_1^4)|\V|^4\V.
		\end{equation*}
		Since $t\in I\subset[t_0,T_0]$, we have
		\begin{equation*}
			\sup_{t\in I}|t^4-t_1^4|\lesssim T_0^3t_2.
		\end{equation*}
		Therefore, by Sobolev embedding and \eqref{eq:p4-frozen-bound},
		\begin{equation}
			\|\langle \nabla \rangle e\|_{L_t^2L_x^{\frac{6}{5}}(I\times\mathbb R^3)}\lesssim T_0^3t_2
			\|\V\|_{L_t^8L_x^{12}(I\times\mathbb R^3)}^4
			\|\langle \nabla \rangle \V\|_{L_t^\infty L_x^2(I\times\mathbb R^3)} \lesssim t_0^{-5} T_0^3t_2B_0^5.\label{eq:p4-error-bound}
		\end{equation}
		
		We now give the perturbative argument more explicitly. Let
		\[
		Z:=\U-\V,\qquad F(z):=|z|^4z.
		\]
		Then $Z(t_1)=0$ and
		\begin{equation}\label{eq:p4-difference-equation}
			Z(t)=-i\int_{t_1}^tS(t-\tau)
			\left\{\tau^4\bigl[F(\V+Z)-F(\V)\bigr]
			+e(\tau, x) \right\} \mathrm{d}\tau.
		\end{equation}
		Fix a sufficiently small constant $\eta>0$. By \eqref{eq:p4-frozen-bound}, we can divide $I$ into finitely many consecutive subintervals $I=I_1\cup\cdots\cup I_N$ ($I_j:=[a_j, a_{j+1}]$) such that
		\begin{equation*}
			\|\nabla \V\|_{L_t^8L_x^{12/5}(I_j\times\mathbb R^3)}\leq\eta\ll1,
			\qquad 1\leq j\leq N,
		\end{equation*}
		where $N$ depends only on $B_0$, $t_0$ and $\eta$. In the following, we restrict $(t,x)$ to $I_j\times\R^3$. For any admissible $(q,r)$, by the Strichartz estimate and \eqref{eq:p4-error-bound},
		\begin{equation*}
			\|\langle \nabla \rangle Z\|_{L_t^qL_x^r}\lsm_{t_0, T_0} \|Z(a_j)\|_{H_x^1} + \big(  \|\nabla\V\|_{L_t^{8} L_x^{\frac{12}{5}}}^4 +  \|\nabla Z\|_{L_t^{8} L_x^{\frac{12}{5}}}^4\big) \norm{\jb{\nabla}Z}_{L_t^{\I} L_x^{2}} + t_2B_0^5.
		\end{equation*}
		
		By the standard contraction mapping argument on each $I_j$, starting from $Z(t_1)=0$ and iterating over the finitely many subintervals, we may choose $t_2>0$ sufficiently small, depending only on $\|\U(t_1)\|_{H_x^1(\mathbb R^3)}$, $t_0$, and $T_0$, such that
		\begin{equation*}
			Z\in C \bigl([t_1,t_1+t_2];H_x^1(\mathbb R^3)\bigr).
		\end{equation*}
		Consequently,
		\begin{equation*}
			\U\in C\bigl([t_1,t_1+t_2];H_x^1(\mathbb R^3)\bigr).
		\end{equation*}
		This completes the proof.
	\end{proof}

	Therefore, combining Propositions \ref{prop:h1gwp-lwpI}--\ref{prop:p4-endpoint}, we obtain that the equation \eqref{eq:nls-initial} admits a solution $\U\in C([0,T_0];H_x^1(\R^3))$ for any $T_0>0$. Thus, Proposition \ref{thm:gwp} follows by applying the inverse pseudo-conformal transform.
	
	\vspace{2cm}
	
	\section{Final data problem for 3D quadratic NLS}\label{sec: 3DquaNLS}
	
	\vspace{0.5cm}
	
	Applying the pseudo-conformal transform, we reduce Theorem \ref{thm:final} to the initial data problem for the following nonautonomous equation:
	\EQn{\label{eq:nls-pc-final}
		i\pd_t \U + \frac12 \De \U = t^{-\frac{1}{2}}|\U|\U,
	}
	with initial data $\U(0)\in\dot H_x^{\frac{1}{2}}(\R^3)$. It suffices to prove its well-posedness on $[0,T_0]$ for any $T_0>1$.
	
	To start with, we introduce the following setup:
	\begin{assu}\label{assu:main-final}
		We make the following assumptions.
		\begin{enumerate}
			\item
			Let $T_0>1$, $|x|^{\frac{1}{2}}u_+\in L_x^2(\R^3)$, and $u_+$ is radial. Let $\de_1>0$ be a sufficiently small absolute constant. Let $\phi\in C_0^{\infty}(\R)$ such that $\phi(r)=1$ if $|r|\leq 1$ and $\phi(r)=0$ if $|r|>2$.
			\item
			Fix a sufficiently small constant $0<\de_0:=\de_0(u_+,T_0, \delta_1)\ll 1$ that will be defined later, as specified precisely in \eqref{defn:delta0-final} below. Since $|x|^{\frac{1}{2}}u_+\in L_x^2(\R^3)$, we can choose dyadic numbers $0<R_-<1<R_+$ such that, with
			\begin{equation*}
				\rho_{R_-,R_+}(x):=(1-\phi(R_-^{-1}|x|))\phi(R_+^{-1}|x|),
			\end{equation*}
			we have
			\begin{equation}\label{eq:initialdata-estimate-v-final}
				\normb{|x|^{\frac{1}{2}}(1-\rho_{R_-,R_+})u_+}_{L_x^2(\R^3)} \le \de_0.
			\end{equation}
			\item
			Set
			\begin{equation*}
				v_+:=(1-\rho_{R_-,R_+})u_+ \text{, and }w_+:=\rho_{R_-,R_+}u_+.
			\end{equation*}
			Define
			\begin{equation*}
				v:=S(t)v_+\text{, and }w:=u-v.
			\end{equation*}
			\item 
			Apply the pseudo-conformal transform, and define
			\begin{equation*}
				\U:=\TT u\text{, }\V:=\TT v\text{, and }\W:=\TT w.
			\end{equation*}
			Denote the transformed initial data by
			\begin{equation*}
				\U_0:= \F^{-1} \wb u_+\text{, } \V_0 := \F^{-1} \wb v_+\text{, and }\W_0 := \F^{-1} \wb w_+.
			\end{equation*}
			Since $w_+$ is supported in the annulus $R_-\leq |x|\leq 2R_+$, we have $\W_0\in L_x^2(\R^3)$. We denote
			\begin{equation*}
				A_0:=1+\norm{\W_0}_{L_x^2(\R^3)}^2.
			\end{equation*}
		\end{enumerate}
		\begin{remark}
			Throughout this section, the norm $\normo{|x|^{1/2}u_+}_{L_x^2(\R^3)}$ can be viewed as a constant. We therefore suppress this dependence, and write
			\begin{equation*}
				C=C\left( \normb{|x|^{\frac12}u_+}_{L_x^2(\R^3)} \right)
			\end{equation*}
			for short.
		\end{remark}
	\end{assu}
	
	Therefore, under Assumption \ref{assu:main-final}, $w$ solves the equation
	\EQ{
		\left\{ \aligned
		&i\pd_t w +\frac12\De w=|v+w|(v+w), \\
		&\lim_{t\ra+\I}\norm{S(-t)w(t,x) - w_+}_{\dot\Sigma^{\frac12}(\R^3)} =0.
		\endaligned
		\right.
	} 
	We also have that
	\begin{equation*}
		\V = S(t)\V_0,
	\end{equation*}
	and
	\begin{equation*}
		\W = S(t)\W_0 -i\int_{0}^tS(t-\ta)(\ta^{-\frac12}|\V+\W|(\V+\W)) \dd \ta.
	\end{equation*}
	By \eqref{eq:initialdata-estimate-v-final}, we obtain the following.
	\begin{lem}[Estimate for the initial data]
		Let Assumption \ref{assu:main-final} hold. Then,
		\begin{equation}\label{eq:initialdata-estimate-v-final-2}
			\norm{\V_0}_{\dot H_x^{\frac12}(\R^3)} \le \de_0, 
		\end{equation}
		and
		\begin{equation}\label{eq:initialdata-estimate-w}
			\norm{\W_0}_{L_x^2(\R^3)} \le A_0^{\frac12}.
		\end{equation}
	\end{lem}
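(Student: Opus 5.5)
The first estimate \eqref{eq:initialdata-estimate-v-final-2} follows from transferring the weighted bound \eqref{eq:initialdata-estimate-v-final} to the Fourier side, using that the Fourier transform is unitary and intertwines multiplication by $|x|^{\frac12}$ with $|\nabla|^{\frac12}$. Concretely, since $\V_0=\F^{-1}\wb v_+$, we have $\wh{\V_0}(\xi)=\wb{v_+}(\xi)$ up to the normalization fixed in Section~\ref{sec:prel} (with our convention, $\F\F^{-1}=\mathrm{Id}$). Hence, by Plancherel,
\begin{equation*}
\norm{\V_0}_{\dot H_x^{\frac12}(\R^3)}=\normb{|\xi|^{\frac12}\wh{\V_0}}_{L_\xi^2(\R^3)}=\normb{|\xi|^{\frac12}\wb{v_+}(\xi)}_{L_\xi^2(\R^3)}=\normb{|x|^{\frac12}(1-\rho_{R_-,R_+})u_+}_{L_x^2(\R^3)}.
\end{equation*}
The last quantity is at most $\de_0$ by \eqref{eq:initialdata-estimate-v-final}, which gives \eqref{eq:initialdata-estimate-v-final-2}.

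For \eqref{eq:initialdata-estimate-w}, the claim is essentially the definition of $A_0$: since $A_0=1+\norm{\W_0}_{L_x^2}^2\ge\norm{\W_0}_{L_x^2}^2$, taking square roots gives $\norm{\W_0}_{L_x^2}\le A_0^{\frac12}$. The only real content is that $A_0<\infty$, that is, $\W_0\in L_x^2$. By Plancherel again,
\begin{equation*}
\norm{\W_0}_{L_x^2}=\norm{w_+}_{L_x^2}=\norm{\rho_{R_-,R_+}u_+}_{L_x^2}.
\end{equation*}
On the support of $\rho_{R_-,R_+}$ we have $|x|\ge R_-$, so $|x|^{-\frac12}\le R_-^{-\frac12}$ there, and $0\le\rho_{R_-,R_+}\le1$. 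Therefore
\begin{equation*}
\norm{\rho_{R_-,R_+}u_+}_{L_x^2}\le R_-^{-\frac12}\normb{|x|^{\frac12}u_+}_{L_x^2}<\infty.
\end{equation*}

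No step is a genuine obstacle. The only point requiring care is the conjugation and Fourier-convention bookkeeping in $\F^{-1}\wb{\,\cdot\,}$, which does not affect any of the norms involved.
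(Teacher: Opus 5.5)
Your proof is correct and is essentially the paper's own argument, which the paper leaves implicit by citing the weighted bound on $v_+$. Since $\wh{\V_0}=\wb{v_+}$, Plancherel turns $\normb{|x|^{\frac12}v_+}_{L_x^2}\le\de_0$ into $\norm{\V_0}_{\dot H_x^{\frac12}}\le\de_0$, and the bound on $\W_0$ is just the definition of $A_0$, with finiteness coming from the annular support of $w_+$ (the paper never states $0\le\phi\le1$, but boundedness of $\rho_{R_-,R_+}$ is all your finiteness step needs).
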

	Combining \eqref{eq:initialdata-estimate-v-final-2}, the Strichartz estimate \eqref{eq:strichartz-1}, and the radial Strichartz estimate \eqref{eq: radial estimate} yields the following linear estimates.
	\begin{lem}
		Let Assumption \ref{assu:main-final} hold. Then, for any admissible pair $(q,r)$,
		\begin{equation}\label{eq:linear-estimate-v-final}
			\normb{|\nabla|^{\frac12}\V}_{L_t^q L_x^r(\R\times\R^3)} + \normb{|\nabla|^{\frac12}\V}_{L_t^{q,2} L_x^r(\R\times\R^3)}\lsm \de_0.
		\end{equation}
		Furthermore, since $u_+$ is radial, we also have the endpoint Strichartz estimate
		\begin{equation}\label{eq:linear-estimate-v-radial-final}
			\norm{\V}_{L_t^2 L_x^\I(\R\times\R^3)}\lsm \de_0.
		\end{equation}
	\end{lem}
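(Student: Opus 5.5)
The plan is to prove the two estimates separately. Both rest on the fact that $\V=S(t)\V_0$ is a free Schr\"odinger evolution. The constants are global in time because \eqref{eq:strichartz-1} and \eqref{eq: radial estimate} hold on any interval $I\subset\R$, in particular $I=\R$.

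For \eqref{eq:linear-estimate-v-final}, we first note that $|\nabla|^{\frac12}$ commutes with $S(t)$, so $|\nabla|^{\frac12}\V=S(t)|\nabla|^{\frac12}\V_0$. Applying \eqref{eq:strichartz-1} with $\ph=|\nabla|^{\frac12}\V_0\in L_x^2$ gives
\[
\normb{|\nabla|^{\frac12}\V}_{L_t^qL_x^r(\R\times\R^3)}\lsm \norm{\V_0}_{\dot H_x^{\frac12}}\le\de_0
\]
for every admissible $(q,r)$, by \eqref{eq:initialdata-estimate-v-final-2}. For the Lorentz-in-time norm we use the Lorentz version of Lemma~\ref{lem:strichartz} when $q<\I$. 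When $q=\I$ we have $L_t^{\I,2}$; the convention $L^{\I,q}=L^\I$ reduces this to the $L_t^\I L_x^2$ bound already obtained. Here $r=2$ since $d=3$, and the Lorentz endpoint $(2,6)$ is covered by the Keel--Tao endpoint estimate in its Lorentz form.

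For \eqref{eq:linear-estimate-v-radial-final}, we first check that $\V_0$ is radial. Indeed, $u_+$ is radial and the cutoff $1-\rho_{R_-,R_+}$ is radial, so $v_+$ is radial. Its conjugate, and hence its inverse Fourier transform $\V_0=\F^{-1}\wb v_+$, is then radial as well. We then apply Lemma~\ref{RSE:radial strichartz estimate} with $d=3$, $q=2$, $r=\I$ and $\gamma=\frac12$, and verify its hypotheses:
\begin{itemize}
\item the scaling relation holds, since $\frac2q+\frac dr=1=\frac32-\frac12$;
\item the condition $\frac2q+\frac{2d-1}{r}=1\le\frac{5}{2}$ holds;
\item $(2,\I)\ne(2,\frac{10}{3})$.
\end{itemize}
Hence $\norm{\V}_{L_t^2L_x^\I(\R\times\R^3)}\lsm\norm{\V_0}_{\dot H^{\frac12}}\le\de_0$.

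No step is a serious obstacle. The only points needing care are the radiality of $\V_0$ under the Fourier/conjugation map and the admissibility check for the endpoint pair $(2,\I)$ in the radial estimate, both of which are direct.
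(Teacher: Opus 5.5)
Your proposal is correct and matches the paper's argument: the paper obtains both bounds by combining $\|\V_0\|_{\dot H^{1/2}}\le\de_0$ with the Strichartz estimate \eqref{eq:strichartz-1}, including its Lorentz-in-time version, and the radial Strichartz estimate \eqref{eq: radial estimate} at $(q,r,\gamma)=(2,\infty,\frac12)$. Your appeal to a Lorentz form of the Keel--Tao endpoint for $(2,6)$ is unnecessary, since $L_t^{2,2}=L_t^2$.
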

	Now, consider the Cauchy problem \eqref{eq:nls-pc-final} with initial data $\U(0)=\U_0$.
	\begin{prop}[Local theory I: local well-posedness in the critical space]\label{prop:localI-final}
		Let Assumption \ref{assu:main-final} hold. Then, there exists $t_0=t_0(\de_1,\U_0)\in (0,1)$ such that the equation \eqref{eq:nls-pc-final} with initial data $\U(0)=\U_0$ admits a unique solution 
		\begin{equation*}
			\U\in C([0,t_0]; \dot H_x^{\frac12}(\R^3)),
		\end{equation*} 
		such that
		\begin{equation*}
			\normb{|\nabla|^{\frac12}\U}_{L_t^\I L_x^2 ([0,t_0]\times\R^3)} \lsm 1,
		\end{equation*}
		and
		\begin{equation}\label{eq:local-h1/2-smallness-final}
			\normb{|\nabla|^{\frac12}\U}_{L_t^{8,2} L_x^{\frac{12}{5}}([0,t_0]\times\R^3)}   \lsm \de_1.
		\end{equation}
	\end{prop}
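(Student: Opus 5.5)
This is the critical case $p=1$, $s=\frac12=-s_c$ of Proposition \ref{prop:local result}(2), with $p=\frac{4}{3+2s}=1$. I will run the same contraction argument in the Lorentz-in-time space, while keeping explicit track of the smallness $\de_1$. The relevant exponents are as follows. For $p=1$ and $s=\frac12$, the choice $\frac1{q_0}=\frac{(3-2s)p}{4(p+2)}=\frac16$ from the proof of Proposition \ref{prop:local result} would give $q_0=6$. I will instead work directly with the admissible pair $(8,\frac{12}{5})$, which is the pair appearing in \eqref{eq:local-h1/2-smallness-final}. The weight $t^{-\frac12}$ lies in $L_t^{2,\I}$, so the natural dual estimate reads
\begin{equation*}
\normb{|\nabla|^{\frac12}(t^{-\frac12}|\U|\U)}_{L_t^{\wt q',2}L_x^{\wt r'}}\lsm \normb{t^{-\frac12}}_{L_t^{2,\I}}\,\norm{\U}_{L_t^{8,2}L_x^{4}}\,\normb{|\nabla|^{\frac12}\U}_{L_t^{8,2}L_x^{\frac{12}{5}}}.
\end{equation*}
Here $\frac1{\wt q'}=\frac12+\frac18+\frac18=\frac34$ and $\frac1{\wt r'}=\frac14+\frac5{12}=\frac23$, so $(\wt q,\wt r)=(4,3)$, which is admissible in $\R^3$. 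I will use the Lorentz-space Hölder inequality, the fractional Leibniz rule (Lemma \ref{lem:kato-ponce-inequality}), and the Sobolev embedding $\dot W^{\frac12,\frac{12}{5}}\subset L^4$. Combined with Lemma \ref{lem:strichartz} in its Lorentz form, this yields, for every admissible $(q,r)$,
\begin{equation*}
\normb{|\nabla|^{\frac12}\Phi(\U)}_{L_t^{q,2}L_x^r\cap L_t^\I L_x^2}\lsm \norm{\U_0}_{\dot H^{\frac12}}+\normb{|\nabla|^{\frac12}\U}_{L_t^{8,2}L_x^{\frac{12}{5}}}^2 .
\end{equation*}
The same argument gives the difference estimate
\begin{equation*}
\lsm\Big(\sum_{\mathcal{Y}\in\{\U,\wt\U\}}\normb{|\nabla|^{\frac12}\mathcal{Y}}_{L_t^{8,2}L_x^{\frac{12}{5}}}\Big)\normb{|\nabla|^{\frac12}(\U-\wt\U)}_{L_t^{8,2}L_x^{\frac{12}{5}}},
\end{equation*}
since $p=1$ makes $|z|z$ Lipschitz with $C^1$ derivative of Hölder order one. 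Lemma \ref{2: Lemma-H^s,0<s<1} handles this directly.

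The key step is to choose $t_0$ so that the free evolution is small: $C\normo{|\nabla|^{\frac12}S(t)\U_0}_{L_t^{8,2}L_x^{12/5}([0,t_0]\times\R^3)}\le\de_1$. This holds because the global norm is finite by Lemma \ref{lem:strichartz} (Lorentz version), and the $L_t^{8,2}$ norm over $[0,t_0]$ tends to zero as $t_0\to0$ by dominated convergence for Lorentz norms with $q<\I$. Then I will define the ball $E=\{\U:\normo{|\nabla|^{\frac12}\U}_{L_t^{8,2}L_x^{12/5}}\le2\de_1\}$ with the metric given by the same norm. For $\de_1$ small, the estimates above show that $\Phi$ maps $E$ into itself, since $\de_1+C\de_1^2\le2\de_1$, and that it contracts, since $C\de_1<\frac12$. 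The unique fixed point satisfies \eqref{eq:local-h1/2-smallness-final}. The $L_t^\I\dot H^{\frac12}$ bound $\lsm\norm{\U_0}_{\dot H^{1/2}}+\de_1^2\lsm1$ follows from the same Strichartz estimate, with the constant depending on $u_+$ as in the Remark. Continuity in time in $\dot H^{\frac12}$ follows from the Duhamel formula, and uniqueness in $C([0,t_0];\dot H^{\frac12})$ follows by the standard argument of shrinking the interval.

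The main obstacle is that $t_0$ cannot depend only on $\norm{\U_0}_{\dot H^{1/2}}$, because this is the critical regime. It depends on the profile $\U_0$ through the smallness of the linear flow on short intervals, which is exactly why the statement reads $t_0=t_0(\de_1,\U_0)$. A second delicate point is the Lorentz-space bookkeeping around the singular weight $t^{-\frac12}\notin L^2$. I plan to rely on the Lorentz-space Strichartz estimates in Lemma \ref{lem:strichartz}, with the exponents checked as above. The radial structure of $u_+$ is not needed at this stage.
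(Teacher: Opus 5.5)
Your proposal is correct and follows essentially the paper's proof. The paper runs the same contraction in the $L_t^{8,2}\dot W_x^{\frac12,\frac{12}{5}}$ ball, using exactly the bound $\|t^{-\frac12}\|_{L_t^{2,\infty}}\|\U\|_{L_t^{8,2}L_x^4}\||\nabla|^{\frac12}\U\|_{L_t^{8,2}L_x^{12/5}}$ with the Lorentz Strichartz estimate, and it chooses $t_0$ so that the free evolution is small in that norm. One caveat: the shrinking-interval argument gives uniqueness only in $C_t\dot H^{\frac12}\cap L_t^{8,2}\dot W_x^{\frac12,\frac{12}{5}}$, not unconditionally in $C_t\dot H^{\frac12}$, because it needs both solutions to have locally small Strichartz norm; that restricted class is what the statement, with its smallness bound, actually requires.
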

	\begin{proof}
		The proof follows from the standard contraction mapping argument, and we only give the key nonlinear estimate. Let $t_0>0$ be some suitable small parameter, and $I:=[0,t_0]$. By the Strichartz estimate, H\"older's inequality in Lorentz space, and Sobolev's inequality,
		\begin{equation}\label{sec5-1}
			\begin{aligned}
				&\normB{|\nabla|^{\frac 12} \int_0^t  S(t-\ta)(\ta^{-\frac12}|\U(\ta)|\U(\ta))\dd \ta}_{L_t^\I L_x^{2} \cap L_t^{8,2} L_x^{\frac{12}{5}}(I\times\R^3)} \\
				&\lsm \|t^{-\frac 12}\|_{L_t^{2,\infty}(I)}\|\U\|_{L_t^{8,2}L_x^4(I\times\R^3)} \||\nabla|^{\frac12}\U\|_{L_t^{8,2}L_x^{\frac{12}{5}}(I\times\R^3)}\\
				&\lsm  \normb{|\nabla|^{\frac12}\U}_{L_t^{8,2} L_x^{\frac{12}{5}}(I\times\R^3)}^2.
			\end{aligned}
		\end{equation}
	\end{proof}
	\begin{prop}[Local theory II: local mass estimate]\label{prop:localII-final}
		Let Assumption \ref{assu:main-final} hold and $t_0$ be given in Proposition \ref{prop:localI-final}. Then, we have $\W\in C([0,t_0]; L_x^2(\R^3))$ and
		\begin{equation}\label{eq: W-L2-t0}
			\norm{\W}_{L_t^\I L_x^2([0,t_0]\times\R^3)} \lesssim A_0^{\frac12}.
		\end{equation}
	\end{prop}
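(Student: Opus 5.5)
We prove the bound by writing $\W$ through its Duhamel formula and estimating it in $L_t^\I L_x^2$ with the inhomogeneous Strichartz estimates. The solution $\U$ from Proposition \ref{prop:localI-final} satisfies $\U\in C([0,t_0];\dot H^{\frac12}_x)$, and $\V=S(t)\V_0\in C(\R;\dot H^{\frac12}_x)$, so $\W=\U-\V$ is already a well-defined distribution. It solves
\begin{equation*}
	\W(t)=S(t)\W_0-i\int_0^tS(t-\ta)\brkb{\ta^{-\frac12}|\U|\U}(\ta)\dd\ta ,
\end{equation*}
and the only task is to show that the Duhamel term lies in $C([0,t_0];L^2_x)$ with a bound $\lsm 1$. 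Since $\norm{S(t)\W_0}_{L_t^\I L_x^2}=\norm{\W_0}_{L^2_x}\le A_0^{\frac12}$ and $A_0\ge1$, this gives \eqref{eq: W-L2-t0}. Continuity in time then follows in the standard way from the Strichartz estimate applied on shrinking intervals.

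The key step is the $L^2$-level estimate of the nonlinearity. We do not expand $|\U|\U$ as $|\V+\W|(\V+\W)$, because that would require smallness or a bootstrap in $\W$. Instead we use only the critical bounds on $\U$ from Proposition \ref{prop:localI-final}. By Sobolev embedding, $\normo{|\nabla|^{\frac12}\U}_{L_t^{8,2}L_x^{12/5}}\lsm\de_1$ gives $\U\in L_t^{8,2}L_x^4$. The bound $\normo{|\nabla|^{\frac12}\U}_{L_t^\I L_x^2}\lsm 1$ gives $\U\in L_t^\I L_x^3$, and by interpolation with the previous bound, $\U$ lies in a whole range of Lorentz–Lebesgue spaces at the $\dot H^{\frac12}$ scaling.

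We then apply H\"older's inequality in Lorentz spaces:
\begin{equation*}
	\normb{t^{-\frac12}|\U|\U}_{L_t^{\wt q',2}L_x^{\wt r'}}\lsm \normo{t^{-\frac12}}_{L_t^{2,\I}}\norm{\U}_{L_t^{a,2}L_x^{b}}^2 .
\end{equation*}
Two constraints must hold simultaneously. The first is that the pair $(\wt q,\wt r)$ is admissible, or acceptable in the sense of Lemma \ref{lem-inhomogeneous Strichartz estimates} with $(q,r)=(\I,2)$. The second is that $\frac1{\wt q'}=\frac12+\frac2a$ and $\frac1{\wt r'}=\frac2b$, where $\frac2a+\frac3b=1$ is the $\dot H^{\frac12}$ scaling condition.

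A scaling count shows the Duhamel term needs $\frac2{\wt q}+\frac3{\wt r}=\frac32+\frac12\cdot\!\!$ (derivative gap). Hence the true Strichartz admissible condition fails by exactly half a derivative, which is why the inhomogeneous (non-admissible) estimates of Lemma \ref{lem-inhomogeneous Strichartz estimates} in the Lorentz form \eqref{esti:inho-Strichartz-Lpq} are needed. Concretely, the condition $\frac2q+\frac3r+\frac2{\wt q}+\frac3{\wt r}=3$ with $(q,r)=(\I,2)$ forces $\frac2{\wt q}+\frac3{\wt r}=\frac32$. Combining this with $\frac1{\wt q'}=\frac12+\frac2a$ and $\frac1{\wt r'}=\frac2b$ forces $\frac2a+\frac3b$ to be incompatible by $\frac12$.

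The resolution is to place the $L^\I_t$ output in a mixed way. One factor of $\U$ is put at $\dot H^{\frac12}$ regularity, namely $L_t^{8,2}L_x^{12/5}$ with $|\nabla|^{\frac12}$ transferred via the dual estimate. Alternatively, one can use the radial endpoint bound \eqref{eq:linear-estimate-v-radial-final} for $\V$ together with $L_t^2L_x^\I$-type control. I would first try a dual Strichartz estimate at the $\dot H^{-\frac12}$ level: estimate $\normo{|\nabla|^{-\frac12}(t^{-\frac12}|\U|\U)}$, then use the radial Strichartz estimate of Lemma \ref{RSE:radial strichartz estimate} with $\gamma=-\frac12$ (dual) to absorb the half-derivative gap, which radiality permits.

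This exponent bookkeeping is the main obstacle: making the $L^2$ Duhamel bound close using only the critical norms of $\U$, with the $t^{-\frac12}\in L^{2,\I}$ weight and the half-derivative deficit handled by radial or inhomogeneous Strichartz estimates. Once a single estimate
\begin{equation*}
	\normB{\int_0^tS(t-\ta)\ta^{-\frac12}|\U|\U\dd\ta}_{L_t^\I L_x^2}\lsm C\brkb{\normo{|\nabla|^{\frac12}\U}_{L_t^\I L_x^2\cap L_t^{8,2}L_x^{12/5}}}
\end{equation*}
is established, the conclusion \eqref{eq: W-L2-t0} follows immediately.
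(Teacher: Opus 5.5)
There is a genuine gap. The single estimate your argument rests on is never proved; you write ``I would first try'' and ``once a single estimate is established''. Moreover, the routes you suggest for it cannot work.

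You correctly compute that, with $t^{-\frac12}$ placed in $L_t^{2,\I}$ and $\U$ measured only in $\dot H^{\frac12}$-level norms, the $L_t^\I L_x^2$ bound misses by half a derivative. This deficit is a scaling obstruction. Under $\U\mapsto\lambda\,\U(\lambda^2t,\lambda x)$, which preserves \eqref{eq:nls-pc-final}, the weight norm $\normo{t^{-1/2}}_{L_t^{2,\I}}$ and all the critical norms of $\U$ you use are invariant. The Duhamel term in $L_x^2$, by contrast, picks up a factor $\lambda^{-\frac12}$. Every tool you propose is itself scale-invariant: the radial Strichartz estimate of Lemma \ref{RSE:radial strichartz estimate} with $\gamma=-\frac12$, the inhomogeneous estimates of Lemma \ref{lem-inhomogeneous Strichartz estimates} (which with output $(\I,2)$ only reproduce dual admissible pairs), and the ``dual $\dot H^{-\frac12}$'' manipulation. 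None of them can close the gap. Any estimate of the form you display must lose a positive power of the length of the time interval; a constant independent of that length would force the Duhamel term to vanish after rescaling.

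The missing idea is to break scaling using the boundedness of $[0,t_0]$. Put $t^{-\frac12}$ in $L_t^{4/3}([0,t_0])$, whose norm is $\sim t_0^{1/4}$; this pays exactly the missing half derivative. Indeed, by Minkowski, H\"older ($1=\frac34+\frac18+\frac18$ in time) and Sobolev,
\begin{equation*}
\Big\|\int_0^tS(t-\tau)\tau^{-\frac12}|\U|\U\,\dd\tau\Big\|_{L_t^\I L_x^2}\le\normb{\tau^{-\frac12}|\U|\U}_{L_t^1L_x^2}\le\normb{\tau^{-\frac12}}_{L_t^{4/3}([0,t_0])}\norm{\U}_{L_t^8L_x^4}^2\lsm t_0^{\frac14}\normb{|\nabla|^{\frac12}\U}_{L_t^{8,2}L_x^{12/5}}^2\lsm\de_1^2 .
\end{equation*}
The bound in $L_t^1L_x^2$ also gives continuity in time. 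With this, your plan closes directly and is in fact slightly simpler than the paper's: no splitting $\U=\V+\W$ and no absorption are needed.

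The paper instead expands one factor. It uses $t^{-\frac12}\in L_t^{4/3}$ in exactly this way for the $\U\V$ piece. For the $\U\W$ piece it keeps $t^{-\frac12}\in L_t^{2,\I}$ but measures $\W$ in the non-scale-invariant norm $\norm{\W}_{L_t^\I L_x^2}$, then absorbs the resulting term $\de_1\norm{\W}_{L_t^\I L_x^2}$. In other words, the paper breaks scaling through $\W$ rather than through the finite time interval.
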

	The proof is reduced to the following nonlinear estimate. We omit the standard argument showing that Lemma \ref{lem:localII-final} implies Proposition \ref{prop:localII-final}.
	\begin{lem}\label{lem:localII-final}
		Let Assumption \ref{assu:main-final} hold, and $t_0$ be given in Proposition \ref{prop:localI-final}. Then, for any $0<t_0'\le t_0$,
		\begin{equation*}
			\left\| \int_{0}^t S(t-\ta)(\ta^{-\frac{1}{2}}|\U|\U) \dd \ta \right\|_{L_t^\I L_x^2([0,t_0']\times\R^3)} \lsm  \de_1  \norm{\W}_{L_t^\I L_x^2([0,t_0']\times\R^3)}  + \de_0\de_1.
		\end{equation*}
	\end{lem}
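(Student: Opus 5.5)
We will estimate the Duhamel term at the $L^2$ level via Strichartz estimates, after splitting the nonlinearity according to $\U=\V+\W$. Write
\[
|\U|\U = |\U|\V + |\U|\W ,
\]
and treat the two pieces separately. The aim is to put each piece into a dual Strichartz space with the factor $\ta^{-\frac12}$ absorbed by $L_t^{2,\infty}$. In each case one factor will be small in a critical norm, namely $\de_1$ from \eqref{eq:local-h1/2-smallness-final} or $\de_0$ from \eqref{eq:linear-estimate-v-final}–\eqref{eq:linear-estimate-v-radial-final}. The remaining factor will carry either $\norm{\W}_{L_t^\I L_x^2}$ or a second small quantity.

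For the piece $|\U|\W$, we use the Lorentz-space Strichartz estimate (Lemma \ref{lem:strichartz}) with a dual pair of the form $L_t^{q',2}L_x^{r'}$, combined with Hölder in Lorentz spaces in time. Take $\ta^{-\frac12}\in L_t^{2,\infty}$, $\U\in L_t^{8,2}L_x^4$, and $\W$ in a Strichartz norm $L_t^{q,2}L_x^{r}$ (or $L_t^\I L_x^2$). The norm of $\U$ is controlled, by Sobolev, by $\norm{|\nabla|^{\frac12}\U}_{L_t^{8,2}L_x^{12/5}}\lsm\de_1$. The exponents must satisfy $\frac1{\tilde q'}=\frac12+\frac18+\frac1q$ and $\frac1{\tilde r'}=\frac14+\frac1r$. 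Pairing $\W$ with an admissible pair, e.g. $\W\in L_t^{8,2}L_x^{12/5}$ at the $L^2$ level, gives $\frac1{\tilde q'}=\frac34$, $\frac1{\tilde r'}=\frac23$. Hence $\tilde q=4$, $\tilde r=3$, which is admissible in 3D. The resulting bound is
\[
\lsm \de_1\norm{\W}_{L_t^{8,2}L_x^{12/5}}.
\]
We then need to upgrade $\norm{\W}_{L_t^{8,2}L_x^{12/5}}$ to $\norm{\W}_{L_t^\I L_x^2}$. The natural route is to estimate the left-hand side in the full Strichartz norm $L_t^\I L_x^2\cap L_t^{8,2}L_x^{12/5}$ and bootstrap. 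Since $\W$ equals $S(t)\W_0$ plus this Duhamel term, its Strichartz norm is controlled by $\norm{\W_0}_{L^2}$ plus the right-hand side, and this closes by smallness of $\de_1$.

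For the piece $|\U|\V$, both $\U$ and $\V$ are small in critical norms, and we aim for the bound $\de_0\de_1$. We use the same Hölder splitting but place $\V$ in a space carrying $L^2$-level integrability with a critical time weight. This is where the radial endpoint estimate \eqref{eq:linear-estimate-v-radial-final}, $\V\in L_t^2L_x^\I$, enters, since $\V_0$ lies only in $\dot H^{\frac12}$ and not in $L^2$. A candidate combination is $\ta^{-\frac12}\in L_t^{2,\infty}$, $\V\in L_t^{2}L_x^\I$, and $\U\in L_t^{\I}L_x^{2}$. This last norm is problematic, because $\U$ is not in $L^2$; only $\W$ is. So we split once more, $|\U|\V\le|\V|^2+|\W||\V|$. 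The term $|\W||\V|$ goes into $\de_0\norm{\W}_{L^\I L^2}$, which is harmless after enlarging constants since $\de_0\le\de_1$. The term $|\V|^2$ must give $\de_0^2$. For this we use the inhomogeneous Strichartz estimates of Lemma \ref{lem-inhomogeneous Strichartz estimates} (in Lorentz form) with acceptable non-admissible pairs, so that the output is $L_t^\I L_x^2$. The time integrability of $\ta^{-\frac12}|\V|^2$ must match a dual acceptable pair whose scaling is shifted by $\frac12$ derivative. Concretely, we take $\V\in L_t^{2}L_x^{\I}$ and $\V\in L_t^{q}L_x^{r}$ with $\frac2q+\frac3r=1$, the $\dot H^{\frac12}$ scaling, which follows from \eqref{eq:linear-estimate-v-final} by Sobolev.

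We expect the main obstacle to be this exponent bookkeeping for the $|\V|^2$ term. The $L_t^{2,\infty}$ weight consumes half a time derivative, so the combination is exactly critical and endpoint-type. The result must therefore land in a Lorentz/inhomogeneous Strichartz estimate satisfying the constraints $\frac1q+\frac1{\tilde q}<1$ and $\frac{d-2}{d}\le r/\tilde r\le\frac d{d-2}$. If that fails, we fall back on the radial endpoint estimate to supply the missing integrability.
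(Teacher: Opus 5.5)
\textbf{The gap is in the $|\mathcal U|\mathcal V$ piece.} The obstacle you anticipate there is misdiagnosed: it is not an endpoint issue but a scaling deficit of half a derivative.

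Because the output norm is $L^\infty_tL^2_x$, the forcing must lie in $L^{\tilde q'}_tL^{\tilde r'}_x$ with $(\tilde q,\tilde r)$ admissible. With $(q,r)=(\infty,2)$, the scaling condition in Lemma \ref{lem-inhomogeneous Strichartz estimates} forces $\frac{2}{\tilde q}+\frac{3}{\tilde r}=\frac32$, so the inhomogeneous estimates gain nothing here. Equivalently, the forcing needs $\frac{2}{\tilde q'}+\frac{3}{\tilde r'}=\frac72$.

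Now count contributions to this quantity:
\begin{itemize}
\item every factor measured at the $\dot H^{1/2}$ level ($L^8_tL^4_x$, $L^\infty_tL^3_x$, $L^2_tL^\infty_x$, \dots) contributes $1$;
\item $\mathcal W\in L^\infty_tL^2_x$ contributes $\frac32$;
\item $\tau^{-1/2}\in L^{2,\infty}_t$ contributes $1$.
\end{itemize}
For $\tau^{-1/2}\mathcal U\mathcal W$ this gives $1+1+\frac32=\frac72$, which is why the critical weight works there. Your combination for $\tau^{-1/2}|\mathcal V|^2$ gives $3$, not $\frac72$; already the time exponents $\frac12+\frac12+\frac1q$ sum to at least $1$.

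No radial, Lorentz or inhomogeneous refinement can repair this. The quantities $\delta_0,\delta_1$ are scale invariant for \eqref{eq:nls-pc-final}, but the $L^2_x$ norm is not. So a bound by $\delta_0\delta_1$ must exploit the finite length of $[0,t_0']$.

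The missing idea is a subcritical H\"older step in time: $\|\tau^{-1/2}\|_{L^{4/3}([0,t_0'])}\lesssim t_0'^{1/4}\le 1$ (recall $t_0<1$), which contributes exactly $\frac32$. The paper then treats the whole piece at once, with no further splitting of $\mathcal U$ and no radial estimate:
\[
\|\tau^{-1/2}\mathcal U\mathcal V\|_{L^{8/7}_tL^{12/7}_x}\lesssim \|\tau^{-1/2}\|_{L^{4/3}_t}\,\|\mathcal U\|_{L^{8}_tL^{4}_x}\,\|\mathcal V\|_{L^\infty_tL^{3}_x}\lesssim \delta_1\delta_0 .
\]
Here $(8,\frac{12}{5})$ is admissible, $\|\mathcal U\|_{L^8_tL^4_x}\lesssim \|\,|\nabla|^{1/2}\mathcal U\|_{L^{8,2}_tL^{12/5}_x}\lesssim\delta_1$ by Sobolev, and $\|\mathcal V\|_{L^\infty_tL^3_x}\lesssim\|\mathcal V_0\|_{\dot H^{1/2}}\le\delta_0$ since $\dot H^{1/2}\subset L^3$.

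\textbf{Two smaller points.} First, your pairing for $|\mathcal W||\mathcal V|$ ($\tau^{-1/2}\in L^{2,\infty}_t$, $\mathcal V\in L^2_tL^\infty_x$, $\mathcal W\in L^\infty_tL^2_x$) has the right count but lands at time exponent $1$. There H\"older fails, since $L^{2,\infty}_t\cdot L^2_t\not\subset L^1_t$. Putting $\mathcal V$ in $L^{8,2}_tL^4_x$ instead fixes it.

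Second, for $|\mathcal U|\mathcal W$ the detour through $\|\mathcal W\|_{L^{8,2}_tL^{12/5}_x}$ and a bootstrap is unnecessary. It is also not justified as written: absorbing requires knowing beforehand that this norm is finite, which the lemma does not give you. The option you put in parentheses works directly and is exactly the paper's first term:
\[
\|\tau^{-1/2}\mathcal U\mathcal W\|_{L^{8/5,2}_tL^{4/3}_x}\lesssim\|\tau^{-1/2}\|_{L^{2,\infty}_t}\,\|\mathcal U\|_{L^{8,2}_tL^4_x}\,\|\mathcal W\|_{L^\infty_tL^2_x}\lesssim\delta_1\|\mathcal W\|_{L^\infty_tL^2_x},
\]
dual to the admissible pair $(\frac83,4)$.
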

	\begin{proof}
		In the proof of this lemma, we restrict $(t,x)$ to $[0,t_0']\times\R^3$. By the Strichartz estimate, Sobolev's inequality, and Proposition \ref{prop:localI-final},
		\EQ{
			&\left\| \int_{0}^t  S(t-\ta)(\ta^{-\frac{1}{2}}|\U|\U) \dd \ta \right\|_{L_t^\I L_x^2} \lsm  \normb{t^{-\frac12}\U\W}_{L_{t}^{\frac85,2} L_x^{\frac43}} + \normb{t^{-\frac12}\U\V}_{L_{t}^{\frac{8}{7}} L_x^{\frac{12}{7}}} \\
			& \qquad \lsm  \|t^{-\frac12}\|_{L_t^{2,\infty}} \norm{\U}_{L_t^{8,2} L_x^4} \norm{\W}_{L_t^\I L_x^2} + \|t^{-\frac12}\|_{L_t^{\frac43}} \norm{\U}_{L_t^{8} L_x^4} \norm{\V}_{L_t^\I L_x^3} \\
			& \qquad \lsm  \de_1 \norm{\W}_{L_t^\I L_x^2} + \de_1\de_0.
		}
		This finishes the proof.
	\end{proof}
	
	Similarly, for $t_1>0$, we consider the Cauchy problem for the equation
	\begin{equation}\label{eq:nls-w-localIII-final}
		\W(t)=S(t-t_1)\W(t_1)-i\int_{t_1}^t S(t-\tau)\big( \ta^{-\frac12}|\U(\ta)|\U(\ta)\big) \dd \ta,
	\end{equation}
	which is the equation used to extend the $L_x^2$ component.
	\begin{prop}[Local theory III: extension of the solution]\label{prop:localIII-final}
		Let Assumption \ref{assu:main-final} hold and $t_0$ be given in Proposition \ref{prop:localI-final}. Then, for any $t_1\in [t_0,T_0)$, there exists $0<t_2\le\min\fbrk{1,T_0-t_1}$ depending on $\norm{\W(t_1)}_{L_x^2(\R^3)}$ and $t_0$, such that the equation \eqref{eq:nls-w-localIII-final} admits a unique solution $\W \in C([t_1,t_1+t_2];L_x^2(\R^3))$. Moreover, for any admissible pair \((q,r)\), 
		\begin{equation*}
			\|\W\|_{L_t^qL_x^r([t_1,t_1+t_2]\times \R^3)} \leq C(\norm{\W(t_1)}_{L_x^2(\R^3)}, t_0).
		\end{equation*}
	\end{prop}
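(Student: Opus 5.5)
The plan is a contraction mapping argument for $\W$ on $I:=[t_1,t_1+t_2]$, with $\V=S(t)\V_0$ treated as a fixed small forcing term. Write $\U=\V+\W$, so that
\[
|\U|\U=|\V+\W|(\V+\W),
\]
and work in the resolution space
\[
B:=\bigl\{\W\in C(I;L_x^2): \norm{\W}_{L_t^\I L_x^2\cap L_t^{8/3}L_x^4(I\times\R^3)}\le 2C\norm{\W(t_1)}_{L_x^2}+1\bigr\},
\]
with the metric given by the same $L_t^\I L_x^2\cap L_t^{8/3}L_x^4$ norm of differences. Since $t_1\ge t_0$, the time weight is harmless on $I$: $\tau^{-1/2}\le t_0^{-1/2}$. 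This is the source of the dependence on $t_0$, and it is why we do not need the Lorentz-space refinement used near $t=0$ in Proposition \ref{prop:localI-final}.

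\textbf{Nonlinear estimate.} By Strichartz (Lemma \ref{lem:strichartz}) with dual pair $(\tfrac83,4)$, we must bound $\norm{\tau^{-1/2}|\U|\U}_{L_t^{8/5}L_x^{4/3}}$. We split the nonlinearity into $\W\W$, $\V\W$ and $\V\V$ type terms.

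The $\W\W$ term is handled by H\"older:
\[
\norm{\W\W}_{L_t^{8/5}L_x^{4/3}}\le t_2^{1/2}\norm{\W}_{L_t^\I L_x^2}\norm{\W}_{L_t^{8/3}L_x^4}.
\]
This gains $t_2^{1/2}$, which is exactly what the $L^2$-subcritical quadratic nonlinearity allows.

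For the cross term $\V\W$, I place $\V$ in $L_t^2L_x^\I$, using the radial endpoint estimate \eqref{eq:linear-estimate-v-radial-final}, and $\W$ in $L_t^{\I}L_x^2$. This lands $\V\W$ in $L_t^2L_x^2$, which is not a dual Strichartz space in the form needed. Instead I would use $\V\in L_t^{8}L_x^{4}$ together with $\W\in L_t^\I L_x^2$. Since $|\nabla|^{1/2}\V\in L_t^8L_x^{12/5}$, Sobolev gives $\V\in L_t^8L_x^4$ with norm $\lesssim\de_0$. Then
\[
\norm{\V\W}_{L_t^{8/5}L_x^{4/3}}\le t_2^{1/2}\de_0\norm{\W}_{L_t^\I L_x^2}.
\]

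The $\V\V$ term is a pure forcing term. Using $\V\in L_t^\I L_x^3$ (Sobolev from $\dot H^{1/2}$) and $\V\in L_t^8L_x^4$, H\"older gives $\V\V\in L_t^{8/7}L_x^{12/7}$, which is the dual of the admissible pair $(8,\tfrac{12}{5})$. This yields a bound
\[
\lesssim t_0^{-1/2}t_2^{1/8}\de_0^2,
\]
finite and small.

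\textbf{Closing the argument.} Differences are estimated the same way, using $\bigl||a|a-|b|b\bigr|\lesssim(|a|+|b|)|a-b|$. Choosing $t_2$ small depending only on $\norm{\W(t_1)}_{L_x^2}$ and $t_0$ (the $\de_0$ contributions are absolutely bounded) makes $\Phi$ a self-map and a contraction on $B$. Continuity in $L^2$ and the bound in every admissible $L_t^qL_x^r$ then follow from Strichartz applied to the Duhamel formula \eqref{eq:nls-w-localIII-final}, with the same right-hand side.

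\textbf{Main obstacle.} The delicate point is ensuring that the cross and forcing terms involving $\V$, which is only in $\dot H^{1/2}$ and not in $L^2$, can be placed in dual Strichartz norms with a positive power of $t_2$. I expect the $\V\V$ term to be the hardest, since it gives no $\W$-smallness. If the H\"older bookkeeping above fails, the first alternative I would try is the radial $L_t^2L_x^\I$ bound for $\V$ together with an inhomogeneous Strichartz estimate from Lemma \ref{lem-inhomogeneous Strichartz estimates}.
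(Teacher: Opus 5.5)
Your proof is correct and is essentially the paper's argument. Both run a Strichartz contraction on $[t_1,t_1+t_2]$ using $\tau^{-1/2}\le t_0^{-1/2}$, with $\V\in L_t^8L_x^4$ (by Sobolev from $|\nabla|^{1/2}\V\in L_t^8L_x^{12/5}$) and $\W\in L_t^{8/3}L_x^4$. The one difference is bookkeeping: the paper puts the whole nonlinearity in the single dual norm $L_t^1L_x^2$ and bounds it by $t_0^{-1/2}\norm{\V+\W}_{L_t^2L_x^4}^2\lesssim t_0^{-1/2}\bigl(t_2^{3/4}\de_0^2+t_2^{1/4}\norm{\W}_{L_t^{8/3}L_x^4}^2\bigr)$, which avoids your three-way split across different dual pairs.

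Two of your time exponents are off. The $\W\W$ term gains $t_2^{1/4}$, not $t_2^{1/2}$, and the $\V\V$ term gains $t_2^{3/4}$, not $t_2^{1/8}$. This does not affect the argument, since any positive power of $t_2$ suffices.
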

	\begin{proof}
		Take some $t_2$ that will be defined later such that $0<t_2\le\min\fbrk{1,T_0-t_1}$. In the proof of this proposition, we denote $I:=[t_1,t_1+t_2]$, and restrict $(t,x)$ to $I\times\R^3$. 
		
		By the Strichartz estimate and H\"older's inequality in $t$, for any admissible pair $(q,r)$,
		\EQ{
			\normB{\int_{t_1}^t S(t-\tau) \ta^{-\frac12}|\U(\ta)|\U(\ta)\dd \ta}_{L_t^qL_x^r} & \lsm  t_0^{-\frac12} \norm{\V+\W}^2_{L_t^2 L_x^4} \\
			&\lsm t_0^{-\frac12} t_2^{\frac 34}\|\V\|^2_{L_t^8L_x^4} + t_0^{-\frac12} t_2^{\frac14}\|\W\|^2_{L_t^{\frac83}L_x^4} \\ 
			&\lsm t_0^{-\frac12} t_2^{\frac 34}\delta_0^2 + t_0^{-\frac12} t_2^{\frac14}\|\W\|^2_{L_t^{\frac83}L_x^4}.
		}
		Then, this proposition follows by the standard contraction mapping argument.
	\end{proof}
	
	Define
	\begin{equation*}
		\M(t) := \norm{\W(t)}_{L_x^2(\R^3)}^2.
	\end{equation*}
	\begin{prop}[A priori estimate]\label{prop:mass}
		Let Assumption \ref{assu:main-final} hold. Assume that there exists $K\geq 1$ such that
		\begin{equation}\label{eq:mass-assumption}
			\M(t_0) \le KA_0.
		\end{equation}
		Take some $t_0< t_3\leq T_0$ such that $\W\in C([t_0,t_3];L_x^2(\R^3))$. Then, we have
		\begin{equation*}
			\sup_{t\in[t_0,t_3]}\M(t)\loe 2KA_0.
		\end{equation*}
	\end{prop}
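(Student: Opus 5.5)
Our plan is a bootstrap (continuity) argument built on an almost-conservation law for the mass of $\W$. The equation for $\W$ is
\begin{equation*}
i\pd_t\W+\tfrac12\De\W=t^{-\frac12}|\V+\W|(\V+\W).
\end{equation*}
Pairing with $\W$ and taking imaginary parts, the term $t^{-\frac12}|\W|\W\cdot\wb\W$ is real, so it drops out. Writing $F(z)=|z|z$, this gives
\begin{equation*}
\frac{\dd}{\dd t}\M(t)=2t^{-\frac12}\,\im\int_{\R^3}\bigl(F(\V+\W)-F(\W)\bigr)\wb\W\,\dd x.
\end{equation*}
Pointwise, $|F(\V+\W)-F(\W)|\lsm |\V|(|\V|+|\W|)$. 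Hence
\begin{equation*}
\abs{\tfrac{\dd}{\dd t}\M(t)}\lsm t^{-\frac12}\Bigl(\norm{\V(t)}_{L_x^\I}\M(t)+\norm{\V(t)}_{L_x^4}^2\norm{\W(t)}_{L_x^2}\Bigr).
\end{equation*}
We would actually work in integrated form on $[t_0,t]$, justified first for smooth data and then by approximation using Propositions \ref{prop:localIII-final} and \ref{prop:localI-final}. The key point is that every forcing term carries at least one factor of $\V$, which is small by \eqref{eq:linear-estimate-v-final} and the radial endpoint estimate \eqref{eq:linear-estimate-v-radial-final}.

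Next, bootstrap on $[t_0,t_3]$. Assume $\sup_{[t_0,t]}\M\le 4KA_0$; this holds at $t=t_0$ and is preserved for a short time by continuity. Integrate the differential inequality and use $t\ge t_0$, so $t^{-\frac12}\le t_0^{-\frac12}$. For the first term, Cauchy--Schwarz in time gives
\begin{equation*}
\int_{t_0}^{t_3}\norm{\V}_{L_x^\I}\dd\ta\le T_0^{\frac12}\norm{\V}_{L_t^2L_x^\I}\lsm T_0^{\frac12}\de_0.
\end{equation*}
For the second term, since $(8,4)$ is $\dot H^{\frac12}$-admissible in the sense of Lemma \ref{RSE:radial strichartz estimate}, we get $\int\norm{\V}_{L_x^4}^2\dd\ta\le T_0^{\frac34}\norm{\V}_{L_t^8L_x^4}^2\lsm T_0^{\frac34}\de_0^2$. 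Altogether,
\begin{equation*}
\sup_{[t_0,t_3]}\M\le KA_0+Ct_0^{-\frac12}\bigl(T_0^{\frac12}\de_0\cdot 4KA_0+T_0^{\frac34}\de_0^2(4KA_0)^{\frac12}\bigr).
\end{equation*}
Choosing $\de_0\ll t_0^{\frac12}T_0^{-1}$, and recalling that $t_0$ depends on $\de_1$ and $\U_0$ but that \eqref{defn:delta0-final} is allowed to depend on these, the right-hand side is at most $2KA_0$ because $KA_0\ge1$. This closes the bootstrap and gives the claim.

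The main obstacle is a circularity: $t_0$ from Proposition \ref{prop:localI-final} depends on $\U_0$, hence on $u_+$, while $\de_0$ must be chosen after $t_0$. Yet the splitting $v_+,w_+$, and so $A_0$, depends on $\de_0$. We expect to resolve this by noting that $t_0$ depends only on $\U_0$ (the full datum) and $\de_1$, not on the splitting, so fixing $t_0$ first and then $\de_0$ is consistent. Note also that the constant $2$ in the conclusion does not need $A_0$ to be bounded independently of $\de_0$, since $A_0$ appears linearly. A secondary technical point is justifying the mass identity when $\W$ is only $C_tL^2_x$; we would handle it by regularizing the data and passing to the limit via the Strichartz-based stability in Proposition \ref{prop:localIII-final}.
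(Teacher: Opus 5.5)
Your proposal is correct and follows the paper's argument: a bootstrap on $[t_0,t_3]$ built on the mass identity for $\W$. There the $|\W|\W\wb\W$ term drops out, and the two remaining terms $\int\tau^{-\frac12}|\V||\W|^2$ and $\int\tau^{-\frac12}|\V|^2|\W|$ are controlled by $\norm{\V}_{L_t^2L_x^\I}$ and $\norm{\V}_{L_t^8L_x^4}^2$, with $\de_0$ then chosen small depending on $t_0$ and $T_0$. The differences from the paper are only refinements, not a different route: you track the explicit powers of $t_0,T_0$, you bootstrap at $4KA_0$ rather than $2KA_0$, you note that $t_0$ is fixed before $\de_0$, and you sketch a regularization to justify the mass identity.
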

	\begin{proof}
		Let $I=[t_0,t_3]$. In the proof of this proposition, all the space-time norms are taken over $I\times\R^3$. We implement a bootstrap procedure on $I$: assume an a priori bound 
		\begin{equation}\label{eq:bound-w-hypothesis-final}
			\sup_{t\in I} \M(t)\loe 2KA_0,
		\end{equation}
		then it suffices to prove that
		\begin{equation}\label{eq:bound-w-conclusion-final}
			\sup_{t\in I} \M(t)\loe \frac32 KA_0.
		\end{equation}
		Recall the equation for $\W$:
		\begin{equation*}
			i\pd_t \W + \frac12\De \W = t^{-\frac{1}{2}} |\U|\U.
		\end{equation*}
		Multiplying the equation by $\wb{\W}$, integrating in $x$, and taking the imaginary part, we obtain 
		\begin{equation*}
			\frac12\frac{\dd}{\dd t}\int_{\R^3}  |\W|^2 \dd x = t^{-\frac {1}2} \im\int_{\R^3} (|\U|\U - |\W|\W) \wb\W \dd x.
		\end{equation*}
		Integrating from \(t_0\) to \(t\in I\), we obtain
		\EQ{
			\M(t) \le & \M(t_0) + C\absb{ \int_{t_0}^{t}\int_{\R^3} \ta^{-\frac {1}2}(|\U|\U - |\W|\W) \wb\W \dd x\dd\ta} \\
			\le & KA_0 + C\absb{ \int_{t_0}^{t}\int_{\R^3} \ta^{-\frac {1}2}(|\V|+ |\W|) |\W\V| \dd x\dd\ta}.
		}
		
		By \eqref{eq:linear-estimate-v-radial-final} and \eqref{eq:bound-w-hypothesis-final}, for $\delta_0\leq 1$, we have that
		\begin{equation*}
			\absb{ \int_{t_0}^{t}\int_{\R^3} \ta^{-\frac {1}2} |\W|^2|\V| \dd x\dd\ta} \lsm_{t_0, T_0} \norm{\V}_{L_t^2 L_x^\I} \norm{\W}_{L_t^\I L_x^2}^2 \lsm_{t_0, T_0} KA_0\de_0,
		\end{equation*}
		and
		\EQ{
			\absb{ \int_{t_0}^{t}\int_{\R^3} \ta^{-\frac {1}2} |\W||\V|^2 \dd x\dd\ta} &\lsm_{t_0, T_0} \norm{\V}_{L_t^8 L_x^4}^2 \norm{\W}_{L_t^\I L_x^2} \\
			&\lsm_{t_0,T_0} K^{\frac12}A_0^{\frac12}\de_0^2\lsm_{t_0, T_0} KA_0\de_0.
		}
		Therefore
		\begin{equation*}
			\sup_{t\in I}\M(t)\loe KA_0 + C(t_0,T_0)\delta_0 KA_0.
		\end{equation*}
		Then, \eqref{eq:bound-w-conclusion-final} follows by taking $\de_0=\de_0(T_0,t_0)$ such that
		\begin{equation}\label{defn:delta0-final}
			\de_0 \ll \min\fbrkbb{1,\frac{1}{2C(t_0,T_0)}}.
		\end{equation}
		It is easy to see that $\de_0$ depends on $T_0$, $u_+$ and $\delta_1$.
	\end{proof}
	
	Thus, by the standard extension argument, we obtain 
	\begin{equation}\label{sec5-2}
		\W\in C([t_0,T_0]; L_x^2(\R^3)) \cap L_t^qL_x^r([t_0, T_0]\times \R^3),
	\end{equation}
	for any admissible pair \((q,r)\). Since \(\U-S(t)\U_0=\W-S(t)\W_0 \in C([t_0,T_0]; L_x^2(\R^3)) \), by Proposition \ref{prop:localI-final}, it remains to show 
	\begin{equation}\label{sec5-3}
		\U-S(t)\U_0 \in C([t_0,T_0]; \dot{H}_x^{\frac{1}{2}}(\R^3)).
	\end{equation}
	Let $I=[\tilde{t}, \tilde{t} +\sigma]$, where $\tilde{t}\in [t_0, T_0)$. We restrict $(t, x)$ to $I\times\R^3$. Arguing as in \eqref{sec5-1},
	\begin{align*}
		&\||\nabla|^{\frac{1}{2}}\U(t)\|_{L_t^\I L_x^{2} \cap L_t^{8,2} L_x^{\frac{12}{5}}} \\
		&\lesssim  \||\nabla|^{\frac{1}{2}}\U(\tilde{t})\|_{L_x^{2}} +  \|t^{-\frac{1}{2}}(\V+\W)\|_{L_t^{\frac{8}{5}, 2}L_x^4} \||\nabla|^{\frac{1}{2}}\U\|_{L_t^{8,2}L_x^{\frac{12}{5}}}\\
		& \lesssim \||\nabla|^{\frac{1}{2}}\U(\tilde{t})\|_{L_x^{2}} + (\delta_0 + \|t^{-\frac{1}{2}}\|_{L_t^4(I)} \|\W\|_{L_t^{\frac{8}{3}}L_x^4})  \||\nabla|^{\frac{1}{2}}\U\|_{L_t^{8,2}L_x^{\frac{12}{5}}}.
	\end{align*} 
	By \eqref{sec5-2}, we can choose $\sigma$ sufficiently small independent of $\wt{t}$ such that 
	\begin{equation*}
		\||\nabla|^{\frac{1}{2}}\U\|_{L_t^\I L_x^{2} \cap L_t^{8,2} L_x^{\frac{12}{5}}} \leq C \||\nabla|^{\frac{1}{2}}\U(\tilde{t})\|_{L_x^{2}} + \frac{1}{2}  \||\nabla|^{\frac{1}{2}}\U\|_{L_t^\I L_x^{2} \cap L_t^{8,2}L_x^{\frac{12}{5}}}.
	\end{equation*}
	Thus, by the standard iterative procedure, we have \(\U \in C([t_0,T_0]; \dot{H}_x^{\frac{1}{2}}(\R^3))\). Hence, \eqref{sec5-3} holds. Applying the inverse pseudo-conformal transform, we obtain Theorem \ref{thm:final}.

\end{document}